\begin{document}

\renewcommand{\ref}[1]{{\color{red}[use \texttt{cref} not \texttt{ref}]}}

\title{Cyclic functional causal models beyond unique solvability with a graph separation theorem}

\author{Carla Ferradini$^1$}
\author{Victor Gitton$^1$}
\author{V.\ Vilasini$^{1,2}$}
\affiliation{$^1$Institute for Theoretical Physics, ETH Zurich, 8093 Zürich, Switzerland\\
$^2$Université Grenoble Alpes, Inria, 38000 Grenoble, France}
\maketitle

\begin{abstract}
\small
Functional causal models (fCMs) specify functional dependencies between random variables associated to the vertices of a graph. 
In directed acyclic graphs (DAGs), fCMs are well-understood: a unique probability distribution on the random variables can be easily specified, and a crucial graph-separation result called the $d$-separation theorem allows one to characterize conditional independences between the variables. 
However, fCMs on cyclic graphs pose challenges due to the absence of a systematic way to assign a unique probability distribution to the fCM's variables, the failure of the $d$-separation theorem, and lack of a generalization of this theorem that is applicable to all consistent cyclic fCMs. 
In this work, we develop a causal modeling framework applicable to all cyclic fCMs involving finite-cardinality variables, except inconsistent ones admitting no solutions. 
Our probability rule assigns a unique distribution even to non-uniquely solvable cyclic fCMs and reduces to the known rule for uniquely solvable fCMs. 
We identify a class of fCMs, called averagely uniquely solvable, that we show to be the largest class where the probabilities admit a Markov factorization. 
Furthermore, we introduce a new graph-separation property, $p$-separation, and prove this to be sound and complete for all consistent finite-cardinality cyclic fCMs while recovering the $d$-separation theorem for DAGs. 
These results are obtained by considering classical post-selected teleportation protocols inspired by analogous protocols in quantum information theory. 
We discuss further avenues for exploration, linking in particular problems in cyclic fCMs and in quantum causality.
\end{abstract}

\newpage
\tableofcontents
\newpage 

\section{Introduction}
Providing causal explanations for observable correlations and understanding how causal relations constrain possible observations are fundamental across scientific disciplines. To achieve this goal, the framework of causal models was formulated for rigorously connecting cause-effect relations and observable data between random variables. Originating in the context of classical statistics, this formalism has found diverse applications in data-driven fields such as machine learning, economics, biological systems and medical trials~\cite{Raita_2021,Kleinberg_2011,Pearl_2009,Spirtes_2005,Petersen_2014,Arti_2020,Liu_2021}.

Classical causal modeling frameworks involve a representation through directed graphs, whose vertices are associated with random variables and edges with causal relations, and incorporate causal mechanisms, such as functional dependencies between variables. More specifically, these are called \emph{functional causal models} (fCMs), as well as \emph{structural equation/causal models}.\footnote{We refer to these as classical causal modeling frameworks to distinguish them from quantum causal modeling frameworks (e.g., \cite{Henson_2014, Costa_2016, Barrett_2019}) where causal mechanisms are linked to quantum channels on quantum systems. This paper will focus on classical causal models, while our accompanying paper \cite{Quantum_paper} develops a similar framework for the quantum case.} 
The majority of the causal modeling literature focuses on \emph{acyclic} graphs, where there exists a well-defined probability rule to evaluate correlations over observed variables for all causal mechanisms and crucial graph-theoretic properties, such as the $d$-separation theorem~\cite{Verma1990, Geiger1990, Pearl_2009}, hold. This theorem is a cornerstone for causal discovery and inference methods as it allows to tightly relate observed conditional independences to the connectivity of the causal graph.

However, these results no longer hold for classical causal models on \textit{cyclic} graphs, which are important to consider to effectively describe physical processes involving feedback~\cite{Pearl_1996,Forre_2017,Bongers_2021} and to study informational models for exotic solutions of general relativity involving causal loops~\cite{vanStockum_1938,Godel1949,Matzner_1967, Deutsch1991,Lloyd_2011, Lloyd_2011_2}. Existing cyclic causal modeling frameworks consider the probability distribution associated with a causal model to be well defined only for a restricted class of models where the functional dependences admit unique solutions (uniquely solvable models). Moreover, it is known that for cyclic graphs, even when restricting to the class of uniquely solvable models involving finite-cardinality variables, the $d$-separation theorem fails \cite{Neal_2000}.

Studying the limits of $d$-separation and identifying alternative graph-separation properties that hold in the cyclic case has emerged as an active research area in classical causal modeling and there has been significant progress in understanding this from different (stronger) unique solvability properties~\cite{Forre_2017, Bongers_2021}. Notably, the concept of $\sigma$-separation was introduced in~\cite{Forre_2017} and shown to be sound and complete in a subclass of uniquely solvable models known as modular structural equation models, which have no constraint on the cardinality or discreteness of the variables involved. However, to our knowledge, a general graph-separation property that applies to all (including non-uniquely solvable) consistent causal models is lacking, even in the case of finite-cardinality variables. This is complicated by the fact that existing methods do not uniquely fix the probability distribution of a classical causal model in non-uniquely solvable models. 

In this work, we address both the problems of fixing the observed probability distribution and of identifying a graph-separation property for all consistent classical cyclic causal models involving finite-cardinality variables. Our approach reveals connections between problems in classical functional causal models and various results in the quantum information community, which we hope will serve as an invitation for collaboration between the communities. We summarize the main contributions below.

\begin{enumerate}
    \item {\bf Framework for cyclic classical models:} We develop a framework for classical cyclic causal models (cyclic fCMs) analogous to, yet independent of, a framework for quantum cyclic causal models introduced in our companion paper~\cite{Quantum_paper}. This framework encompasses all classical causal models definable on discrete variables of finite cardinality, excluding only the most pathological ones where the functional dependencies admit no solutions. 
    \item {\bf Probability rule:} We construct a method for uniquely fixing the probability distribution for all causal models in our framework, which recovers the previously known probability rule for uniquely solvable models. In \cite{VilasiniColbeckPRL} a method for computing the probability distribution was suggested for one specific non-uniquely solvable cyclic causal model involved in demonstrating that the relativistic principle of no superluminal communication does not forbid causal loops in $(1+1)$-Minkowski space-time. Our probability rule also recovers this case and provides a generalization of the method suggested in \cite{VilasiniColbeckPRL}. 
    \item {\bf A graph-separation property for cyclic fCMs inspired by quantum teleportation:} We introduce a novel graph-separation property for cyclic fCMs, called $p$-separation. We prove that $p$-separation is sound and complete for all fCMs in our framework, and reduces to $d$-separation in the acyclic case. To our knowledge, this is the first sound and complete graph-separation property known for non-uniquely solvable fCMs over finite-cardinality variables. This is achieved by mapping any cyclic fCM on finite-cardinality variables to an acyclic causal model involving post-selection. This mapping introduces the concept of a classical post-selected teleportation protocol, inspired by the quantum teleportation protocol \cite{Bennett1993} that is a key concept in the quantum information community. 

    \item {\bf On correlation gaps between graph-separation properties and finite vs infinite cardinality variables:} We compare $p$-separation introduced here with $\sigma$-separation~\cite{Forre_2017}, and propose a new direction for future research on characterizing correlation gaps between graph-separation properties: for a given graph $\graphname$, and two graph-separation properties that imply distinct sets of conditional independences on correlations, is it possible to witness this gap through a class of causal models on $\graphname$? In particular, our results indicate that there exists a graph within which no gap between $d$ and $\sigma$ separation can be witnessed for any causal model in our framework. This suggests that infinite-cardinality or continuous variables would be necessary to witness the gap, and indeed there exist such examples of continuous variable models on this graph \cite{Forre_2017, forre_2018}. More generally, this motivates exploration of correlation gaps between finite vs infinite cardinality fCMs on a given cyclic causal structure, analogous to the highly studied problem of certifying correlation gaps between classical and quantum correlations in an acyclic causal structure (of which the fundamental Bell’s theorem~\cite{Bell_1964} of quantum mechanics is an instance). 

\item {\bf Solvability properties:} We analyze solvability properties of cyclic fCMs, linking the number and existence of solutions to conditions on post-selection success probability. We identify a class of models satisfying a property called average unique solvability — a strict superset of uniquely solvable models — and show that this property is necessary and sufficient for recovering the usual probability rule based on Markov factorization. Our findings connect cyclic functional causal models, studied in the statistics community, with classical processes without definite causal order \cite{Baumeler_2016}, originating in the quantum information community. We note striking similarities between the solvability properties of these models and results~\cite{Baumeler_2016_fixedpoints} on the fixed points of classical processes.

\end{enumerate}

For a brief overview of our framework and probability rule, as well as an introduction to our new graph-separation property, $p$-separation, we recommend referring to the example in~\cref{example: probability rule detail} and~\cref{sec: psep and examples}, respectively.

\paragraph{Notation.}
We denote with $\graphname = \graphexpl$ a directed graph where each edge $\edgename = \edgearg{\vertname_1}{\vertname_2} \in \edgeset$ is an ordered pair of two vertices $\vertname_1,\vertname_2\in\vertset$. We shall always assume, for convenience, that the set of vertices $\vertset$ is equipped with a preferred order, such that we can write $\vertset = \{\vertname_1,\dots,\vertname_\vertcount\}$ where $\vertcount = |\vertset|$.
The incoming and outgoing edges to a vertex $\vertname\in\vertset$ are denoted
\begin{subequations}
\begin{align}
    \inedges{\vertname} &= \biglset
    { e\in\edgeset }{ \exists \vertname'\in\vertset \st e = \edgearg{\vertname'}{\vertname} }, \\
    \outedges{\vertname} &= \biglset
    { e\in\edgeset }{ \exists \vertname'\in\vertset \st e = \edgearg{\vertname}{\vertname'} },
\end{align}
\end{subequations}
while the parents and children of a vertex $\vertname\in\vertset$ are denoted
\begin{subequations}
\begin{align}
    \parnodes{\vertname} &= \biglset{ \vertname' \in \vertset }{ (\vertname',\vertname) \in \edgeset}, \\
    \childnodes{\vertname} &= \biglset{ \vertname' \in \vertset }{ (\vertname,\vertname') \in \edgeset }.
\end{align}
\end{subequations}
Given a graph $\graphname=\graphexpl$, we say that a vertex $\vertname\in\vertset$ is exogenous if the set $\parnodes{\vertname}$ is empty.
Otherwise, $\vertname$ is said to be endogenous.
We define the sets
\begin{equation}
    \exnodes = \{\vertname\in\vertset \; : \; \parnodes{\vertname}=\emptyset\} \quad \text{and} \quad  \nexnodes = \vertset \setminus \exnodes,
\end{equation}
of exogenous and endogenous vertices of a graph.
We represent directed graphs using diagrams where vertices are denoted as rectangles $\centertikz{
            \node[onode] (q) at (0,0) {$\vertname$};
            }$ and edges as directed arrows $\centertikz{
        \draw[cleg] (0,0) -- (0.6,0);
        }$.

\section{Functional causal models}
\label{sec:fCMs}
Here we review \emph{functional causal models}, which are also referred to as \textit{structural causal models}.  
While functional causal models are generally defined for variables taking values from a continuous set, here we restrict our definition to functional models over finite-cardinality variables as this is the relevant case for our results. For brevity, we will refer to this subclass of functional models as functional models instead of finite functional models, and denote them as $\fcm$.
\begin{definition}[Finite functional causal model]
    \label{def:functional_CM}
    Given a directed graph $\graphname=\graphexpl$, a finite functional model \textup{($\fcm_{\graphname}$)} is given by associating the following specifications to each vertex $\vertname\in\vertset$:
    \begin{myitem}
        \item A random variable $X_\vertname$ taking values $x_\vertname$ 
        from a non-empty finite set $\outcomemaparg{\vertname}$. We will use a notation where if $\vertset'\subseteq\vertset$ is a non-empty subset of vertices,
        \begin{equation}
            \outcomemaparg{\vertset'}=\prod_{\vertname\in\vertset'} \outcomemaparg{\vertname}
        \end{equation}
        where $\prod$ here denotes the Cartesian product.
        \item An error random variable $\errorrvarg\vertname$ taking values $u_\vertname$ 
        from a finite set $\errormaparg{\vertname}$, distributed as $\probex{\vertname}: \errormaparg{\vertname}\mapsto [0,1]$.
        \item A function 
            $\funcarg{\vertname}: \outcomemaparg{\parnodes{\vertname}}\times \errormaparg{\vertname} \mapsto \outcomemaparg{\vertname}$.
    \end{myitem} 
\end{definition}
One can think of functional models as assigning to each vertex $\vertname\in\vertset$ a value from the set $\outcomemaparg{\vertname}$. The value assignment on a vertex, $\vertname\in\vertset$, depends stochastically on the value assignments of its parents through the functional dependency associated to it, $\funcarg{\vertname}$. The stochastic character of such dependency is given by error variables associated to each vertex.
In other words, the value $\outcome_{\vertname}\in\outcomemaparg{\vertname}$ that we associate to the vertex $\vertname$, depends deterministically on the values $\outcome_{\parnodes{\vertname}} = \{\outcome_{\vertname'}\in\outcomemaparg{\vertname'}\}_{\vertname'\in\parnodes{\vertname}}$ and $u_\vertname\in\errormaparg{\vertname}$ through $\outcome_{\vertname} = \funcarg{\vertname}\big(\outcome_{\parnodes{\vertname}}, u_\vertname\big)$, by averaging on $u_\vertname\in\errormaparg{\vertname}$ we obtain the stochastic dependency on $\outcome_{\parnodes{\vertname}}$ only. 

Given a functional model on a graph, a well-defined probability distribution over the values of the vertices has to be defined.
In the special case of functional models on \textit{acyclic} graphs, the probability rule is given as follows.
Note that this definition is standard in the literature~\cite{Pearl_2009, Spirtes_2005}\footnote{This probability rule for \textit{acyclic} functional models is well-defined also in the infinite-cardinality or continuous variable case.}.

\begin{definition}[Probability distribution of an acyclic functional model] 
    \label{def: distribution_functional_cm}
    \hypertarget{probfacyc}{Consider} a functional model $\fcm_{\graphname}$ on an \textup{acyclic} graph $\graphname=\graphexpl$ and a global observed event $\outcome :=\{\outcome_{\vertname}\in\outcomemaparg{\vertname}\}_{\vertname\in\vertset}$. The probability $\probfacyc\left(\outcome\right)_{\graphname}\in [0,1]$ is defined as 

    \begin{equation}
        \probfacyc\left(\outcome\right)_{\graphname} = \sum_{u}\prod_{\vertname\in\vertset}\probex{\vertname}(u_\vertname) \delta_{\outcome_{\vertname}, \funcarg{\vertname}\left(\outcome_{\parnodes{\vertname}},u_{\vertname}\right)},
    \end{equation}
    where the sum $\sum_{u}$ runs over $u=\{u_\vertname\in\errormaparg{\vertname}\}_{\vertname\in\vertset}$.
\end{definition}
If some vertices are unobserved, the probability distribution over the remaining ones is obtained though marginalising the distribution in~\cref{def: distribution_functional_cm} over the unobserved variables.

Notice that the above probability rule can be equivalently expressed as
 \begin{equation}
 \label{eq: Markov1}       
   \probfacyc\left(\outcome\right)_{\graphname} = \prod_{\vertname\in\vertset} \prob^{(\vertname)}\left(\outcome_\vertname|\parnodes{\outcome_\vertname}\right)_{\graphname},
 \end{equation}  
where $\prob^{(\vertname)}\left(\outcome_\vertname|\parnodes{\outcome_\vertname}\right)_{\graphname}= \sum_{u_\vertname}\probex{\vertname}(u_\vertname) \delta_{\outcome_{\vertname}, \funcarg{\vertname}\left(\outcome_{\parnodes{\vertname}},u_{\vertname}\right)}$ is the conditional probability distribution of the variable $\outcomervarg{\vertname}$ conditioned on its parent variables $\parnodes{\outcomervarg{\vertname}}$ in $\fcm_{\graphname}$.

\paragraph{Remark.}\label{remark:errorrv}\Cref{def:functional_CM} associates to each vertex an error random variable to account for stochastic dependencies.
In many of the examples treated below, we consider deterministic dependencies, which amount to associating functions that do not depend on the error variable. In this case, we omit the error variables from the definition of the functional model and in the probability rule. 
In addition, for an exogenous vertex in $\graphname$, $\vertname\in\exnodes$, such that $\outcomemaparg{\vertname}=\errormaparg{\vertname}$ and $\funcarg{\vertname}(u_{\vertname})=u_\vertname$, the variable associated to $\vertname$, $\outcomervarg{\vertname}$, is distributed as the error variable $\errorrvarg{\vertname}$. 

Thus, in the special case where all endogenous vertices depend deterministically on their parents and all associations to exogenous vertices satisfy $\outcomemaparg{\vertname}=\errormaparg{\vertname}$ and $\funcarg{\vertname}(u_{\vertname})=u_\vertname$, the probability rule can be written as
\begin{equation}
        \probfacyc\left(\outcome\right)_{\graphname} = \prod_{\vertname\in\exnodes}\probex{\vertname}\left(\outcome_{\vertname}\right)  \prod_{w\in\nexnodes} \delta_{\outcome_{w}, \funcarg{w}\left(\outcome_{\parnodes{w}}\right)}.
    \end{equation}

\paragraph{Example of an acyclic functional model.} 

Consider the following directed acyclic graph:
\begin{equation}
\label{eq:example acyclic fcm}
    \graphname=\centertikz{
        \node[onode] (Y) {Z};
        \node[onode] (X) [above left = \chanvspace and \chanvspace of Y] {Y};
        \node[onode] (Z) [above right = \chanvspace and \chanvspace of Y] {X};
        \draw[cleg] (Z) -- (X);
        \draw[cleg] (Z) -- (Y); 
        \draw[cleg] (X) -- (Y);
    }.
\end{equation}
We define a functional causal model for $\graphname$. Let us associate to the vertex $X$ an error $\errorrvarg{X}$ distributed as $\probex{X}$, and a function $\funcarg{X}:\errormaparg{X}\mapsto\outcomemaparg{X}$. Since the vertex $Y$ has parents $\parnodes{Y}=\{X\}$, it has associated an error $\errorrvarg{Y}$ distributed as $\probex{Y}$, and a function $\funcarg{Y}:\outcomemaparg{X}\times\errormaparg{Y}\mapsto\outcomemaparg{Y}$. While the vertex $Z$, which has $\parnodes{Z}=\{X,Y\}$, has associated an error $\errorrvarg{Z}$ distributed as $\probex{Z}$, and a function $\funcarg{Z}:\outcomemaparg{X}\times\outcomemaparg{Y}\times\errormaparg{Z}\mapsto\outcomemaparg{Z}$. 
Thus, the probability of a joint event $(x,y,z)\in \outcomemaparg{X}\times\outcomemaparg{Y}\times\outcomemaparg{Z}$, is evaluated using~\cref{def: distribution_functional_cm}:
\begin{equation}    \probfacyc(x,y,z)_{\graphname}=\sum_{u_X,u_Y,u_Z}\probex{X}(u_X) \probex{Y}(u_Y)\probex{Z}(u_Z)\delta_{x,\funcarg{X}(u_X)}\delta_{y,\funcarg{Y}(x,u_Y)}\delta_{z,\funcarg{Z}(x,y,u_Z)}.
\end{equation}

As described in the remark of~\cref{remark:errorrv}, let us consider the case where $\funcarg{Y}(x,u_Y)=\funcarg{Y}(x)$, $\funcarg{Z}(x,y,u_Z)=\funcarg{Z}(x,y)$, i.e., the variables $Y$ and $Z$ depend deterministically on their parents, and $\funcarg{X}(u_X)=u_X$. Then, the probability rule simplifies to
\begin{equation}
\label{eq:example acyclic fcm probability}
    \probfacyc(x,y,z)_{\graphname}=\probex{X}(x)\delta_{y,\funcarg{Y}(x)}\delta_{z,\funcarg{Z}(x,y)}.
\end{equation}
If we consider the variable $X$ unobserved, we evaluate the probability of a joint event $(y,z)\in\outcomemaparg{Y}\times\outcomemaparg{Z}$ over the remaining observed vertices by marginalising~\cref{eq:example acyclic fcm probability} as
\begin{equation}
     \probfacyc(y,z)_{\graphname}=\sum_{x\in\outcomemaparg{X}}\probfacyc(x,y,z)_{\graphname}=\sum_{x\in\outcomemaparg{X}}\probex{X}(x)\delta_{y,\funcarg{Y}(x)}\delta_{z,\funcarg{Z}(x,y)}.
\end{equation}

\paragraph{Cyclic functional models.}

If the graph is \textit{cyclic}, the literature considers the probability rule to be well-defined only for a subset of models~\cite{Forre_2017, Bongers_2021}. 
These correspond to functional models that are \textit{uniquely solvable}, i.e., for each value assignment of the error variables there always exists a unique solution (a unique value assignment for the variables associated to the vertices) satisfying all the functional dependencies. In this special case, the distribution is defined in the same way as~\cref{def: distribution_functional_cm}. We refer to~\cite{Forre_2017, Bongers_2021} and~\cref{sec:solvability} for more details. 

However, not all cyclic causal models are uniquely solvable, for instance consider the functional model on the cylic graph
\begin{equation}
    \graphname= \centertikz{
        \node[onode] (A) at (0,0) {$A$};
        \node[onode] (B) at (2,0) {$B$};
        \draw[cleg] (A) to [in=120, out=60] (B);
        \draw[cleg] (B) to[in=300, out=240] (A);
    }
\end{equation}
with the following associations:
we associate to $A$ and $B$ the same finite set, i.e., $\outcomemap=\outcomemaparg{A}=\outcomemaparg{B}$.
The vertex $A$ is equipped with the function $\funcarg{A} : \outcomemaparg{B} \mapsto \outcomemaparg{A}$ such that $\funcarg{A}(b) = b$ for all $b \in \outcomemap$.
The vertex $B$ is equipped with the function $\funcarg{B} : \outcomemaparg{A} \mapsto \outcomemaparg{B}$ such that $\funcarg{B}(a) = a$ for all $a\in\outcomemap$. These functional dependencies are deterministic, i.e., do not depend on error variables, hence we omit them (see the remark above), and fix the values of $A$ and $B$ to be equal.

This functional model is not uniquely solvable (unless the set $\outcomemap$ has trivial cardinality)\footnote{In this case, the error variables do not play any role in the functional dependencies. Thus, the model is uniquely solvable if and only if the deterministic dependencies admit a unique solution}, since $a=b$ is a solution of the model for all $a=b\in\outcomemap$. The literature considers any distribution $\mathcal{P}$ on $A$ and $B$ such that $\mathcal{P}(a=b)=1$, i.e., the two variables are equal with certainty. This corresponds to a distribution of the form $\mathcal{P}(a,b)=p(a)\delta_{a,b}$, for an arbitrary distribution $\{p(a)\}_{a\in\outcomemap}$. Thus, in the non-uniquely solvable case, the distribution is not uniquely defined.

Our first goal for the next sections is to solve this ambiguity and give a method to define a unique probability distribution over the vertices of discrete functional models on any cyclic graph. Based on this, we will develop our new graph-separation property, $p$-separation.

\section{Mapping cyclic to acyclic functional models with post-selection}
\label{sec:fCM_to_afCM}
In order to define probabilities for functional models on arbitrary cyclic graphs, we construct a mapping from cyclic models to acyclic ones with post-selection. The mapping is inspired by a quantum protocol which is well-known in quantum mechanics and information theory, namely the \textit{quantum teleportation protocol} \cite{Bennett1993}. This protocol allows a party to transmit an arbitrary quantum state to another party using only quantum correlations and classical communication. If we introduce post-selection, classical communication is no longer necessary, and in this case, we call the protocol \textit{post-selected teleportation}~\cite{Lloyd_2011, Lloyd_2011_2}.

Here, we construct a classical analogue of the (post-selected) teleportation protocol and define the desired mapping. The mapping and probability rule follow the same ideas of~\cite{Quantum_paper}, where we defined the probability rule for \textit{quantum} cyclic causal models exploiting post-selected teleportation. 

\subsection{Classical post-selected teleportation} 
\label{sec: classical ps tele}
Consider two binary random variables, $A$ and $C$, taking values from the same alphabet $\{0,1\}=\outcomemaparg{A}=\outcomemaparg{C}$, and let $\probex{A}$ be an arbitrary distribution on $A$. Our goal is to construct a protocol which, for any distribution $\probex{A}$, allows us to obtain the same distribution on $C$ as though there was an identity channel between the variables $A$ and $C$ that ``teleports'' $A$'s distribution to $C$.  
Let us consider the following: take $C$ to be uniformly distributed and measure whether $A$ and $C$ are equal, if not apply a bit flip correction to $C$. For clarity let us denote with $C'$ the variable $C$ after the correction (which is trivial if $A$ and $C$ are equal) is applied. We can achieve this as follows. First we represent this protocol using the graph\footnote{This graph mimics the structure of the quantum teleportation protocol \cite{Bennett1993}.}
\begin{align}
    \centertikz{
    \begin{scope}[xscale=1.2]
        \node[onode] (A) at (0,0) {$A$};
        \node[onode] (pre) at (1,0) {$C$};
        \node[onode] (post) at (0.5,1) {$\postvertname$};
        \draw[cleg] (pre) -- (post);
        \draw[cleg] (A) -- (post);
        \node[onode] (op) at (1.5,1) {$C'$};
        \draw[cleg] (post) -- (op);
        \draw[cleg] (pre) -- (op);
    \end{scope}
    },
\end{align}
and associate $\probex{A}$ to $A$, $\probex{C}(c)=1/2$ to $C$, the function $\funcarg{\postvertname}(a,c)=\delta_{a,c}$ to $T$ and the function $\funcarg{C'}(c,t) = c\oplus t\oplus 1$ to $C'$, for all $a,c\in\outcomemap$ and $t\in\{0,1\}$. In this example, we consider the error variables as described in the remark of~\cref{remark:errorrv}.
The distribution of $C'$ and $T$ can be evaluated using~\cref{def: distribution_functional_cm} and the fact that $\delta_{t,\delta_{a,c}} = \delta_{t,a\oplus c\oplus 1}$:
\begin{equation}
\begin{split}
    \probfacyc(c',t) &= \sum_{a,c} \probex{A}(a)\probex{C}(c)\delta_{t,a\oplus c\oplus 1} \delta_{c',c\oplus t\oplus 1}\\
    &= \frac{1}{2}\sum_{a} \probex{A}(a)\delta_{a,c'} \sum_{c}\delta_{c'\oplus t,c}\\
    &=\frac{1}{2}\sum_{a} \probex{A}(a)\delta_{a,c'} =\frac{1}{2}\probex{A}(c').
\end{split} 
\end{equation}
Hence, independently of $t$ the distribution on $C'$ equals the initial and unknown distribution on $A$ up to a constant factor. In particular, if we only consider the instances where $A$ and $C$ take equal values, i.e., $t=1$, no correction on $C$ is needed and the function from $C$ to $C'$ acts as an identity channel, $c'=\funcarg{C'}(c,t=1) = c$ for all $c$. Thus, if we post-select on $t=1$, the vertex $C'$ is not necessary and we can represent the post-selected protocol as
\begin{align}
    \centertikz{
    \begin{scope}[xscale=1.2]
        \node[onode] (A) at (0,0) {$A$};
        \node[onode] (pre) at (1,0) {$C$};
        \node[psnode] (post) at (0.5,1) {$T$};
        \draw[cleg] (pre) -- (post);
        \draw[cleg] (A) -- (post);
        \draw[cleg] (pre) -- (1.5,1);
    \end{scope}
    }
\end{align}
where we denoted with $\centertikz{
            \node[psnode] (q) at (0,0) {};
            }$ the post-selected vertex. 
In this case, we have $\probfacyc(c|t=1)=\probex{A}(c)$ as desired.  Since this holds for any distribution on $\probex{A}$, we can identify this structure with an identity channel:
            \begin{align}
\label{eq:diagram_cps}
    \centertikz{
    \begin{scope}[xscale=1.2]
        \node[onode] (pre) at (1,0) {$C$};
        \node[psnode] (post) at (0.5,1) {$T$};
        \draw[cleg] (pre) -- (post);
        \draw[cleg] (0,0) -- (post);
        \draw[cleg] (pre) -- (1.5,1);
    \end{scope}
    }
    \equiv
    \centertikz{
    \begin{scope}[xscale=1.2]
        \draw[cleg] (0,0) -- (0.5,1);
    \end{scope}
    },
\end{align}
where the equivalence has to be understood at the level of conditional probabilities, conditioned on the event $t=1$.
Motivated by such example, we define in more generality a classical post-selected teleportation protocol.

\begin{definition}[Classical post-selected teleportation protocol] 
\label{def:classical ps tele}
Let $\outcomemaparg{A}=\outcomemaparg{C}$  and $\outcomemaparg{B}$ be finite sets. A classical post-selected teleportation protocol consists of a triple $(\ctelefunc, \cteleprob_B,\cteleprob_C)$ where $\ctelefunc:\outcomemaparg{A}\times\outcomemaparg{B} \times\outcomemaparg{C} \mapsto \{0,1\}$ is a function and $\cteleprob_i:\outcomemaparg{i}\mapsto [0,1]$ for $i=B,C$ are probability distributions, such that for all probability distribution $\probex{A}:\outcomemaparg{A}\mapsto[0,1]$, it holds that
\begin{equation}
\sum_{\substack{a\in\outcomemaparg{A}\\b\in\outcomemaparg{B}}} \probex{A}(a)\delta_{\ctelefunc(a,b,c), 1} \cteleprob_B(b)\cteleprob_C(c)=\teleprob \probex{A}(c)
\end{equation}
for all $c\in\outcomemaparg{C}$,
where $\teleprob\in(0,1]$ is the success probability of the post-selected teleportation protocol.
\end{definition}
In contrast to the example presented above, general classical post-selected teleportation protocols can be implemented using a non-deterministic function on $A$ and $C$ to post-select (where $B$ plays the role of an error variable for the post-selection vertex $T$). The following results are proven in~\cref{app:proofs map}. 

\begin{restatable}{lemma}{indepprob}
    The success probability of a classical teleportation protocol, $\teleprob$, of~\cref{def:classical ps tele} is independent of the probability distribution $\probex{A}$ being teleported.
\end{restatable}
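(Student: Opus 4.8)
The plan is to show that the value of $\teleprob$ in \cref{def:classical ps tele} is forced to depend only on the triple $(\ctelefunc, \cteleprob_B, \cteleprob_C)$ and not on $\probex{A}$. First I would treat $\teleprob$ a priori as a function $q(\probex{A})$ of the teleported distribution — that is, for each $\probex{A}$ the defining identity is assumed to hold with some $q(\probex{A}) \in (0,1]$ — and the goal becomes to prove that $q$ is constant. To this end, introduce the auxiliary quantity $h(a,c) := \sum_{b \in \outcomemaparg{B}} \delta_{\ctelefunc(a,b,c),1}\, \cteleprob_B(b)\, \cteleprob_C(c)$, so that the defining identity reads $\sum_{a \in \outcomemaparg{A}} \probex{A}(a)\, h(a,c) = q(\probex{A})\, \probex{A}(c)$ for all $c \in \outcomemaparg{C} = \outcomemaparg{A}$.

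The key step is to test this identity against point-mass distributions. Fixing $a_0 \in \outcomemaparg{A}$ and taking $\probex{A}$ to be the point mass at $a_0$ yields $h(a_0, c) = r_{a_0}\, \delta_{c,a_0}$ for every $c$, where $r_{a_0} \in (0,1]$ denotes the corresponding value of $\teleprob$; hence $h(a,c) = 0$ whenever $c \ne a$, and $h(a,a) = r_a$. Substituting this structure back into the identity for a general $\probex{A}$ collapses the left-hand sum to the single term $a = c$, giving $\probex{A}(c)\, r_c = q(\probex{A})\, \probex{A}(c)$ for all $c$. Evaluating this at the uniform distribution on $\outcomemaparg{A}$ — which is well-defined since $\outcomemaparg{A}$ is finite and non-empty — one may divide by $\probex{A}(c) = 1/|\outcomemaparg{A}| \neq 0$ to conclude that $r_c$ takes the same value for all $c$, say $\teleprob$. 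Finally, for an arbitrary $\probex{A}$, choosing any $c$ with $\probex{A}(c) > 0$ (such a $c$ exists because $\probex{A}$ is a probability distribution) and dividing gives $q(\probex{A}) = r_c = \teleprob$, which is the claim.

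I do not expect a serious obstacle here. The only points requiring care are (i) recognizing that it suffices to probe the defining condition with point masses, which immediately forces the ``identity-channel'' structure $h(a,c) \propto \delta_{a,c}$, and (ii) handling the division by $\probex{A}(c)$ correctly, which is why the argument is first run on the full-support uniform distribution and only then transferred to arbitrary $\probex{A}$. As a sanity check one may also note that summing the defining identity over all $c$ identifies $\teleprob$ with the post-selection success probability $\sum_{a,b,c}\probex{A}(a)\,\delta_{\ctelefunc(a,b,c),1}\,\cteleprob_B(b)\,\cteleprob_C(c)$, in agreement with the terminology, and the argument above shows that this quantity does not in fact depend on $\probex{A}$.
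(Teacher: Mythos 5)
Your proof is correct, but it takes a genuinely different route from the paper's. The paper argues by contradiction via convexity: assuming $\teleprob$ takes different values on two distributions, it feeds the mixture $\lambda\probex{A}+(1-\lambda){p'}^{A}$ through the defining identity, uses linearity of the left-hand side and a marginalization over $c$ to show $\teleprob$ must be affine on mixtures, and then extracts a quadratic in $\lambda$ whose leading coefficient $\bigl(\teleprob(\probex{A})-\teleprob({p'}^{A})\bigr)\bigl(\probex{A}(c)-{p'}^{A}(c)\bigr)$ must vanish for all $\lambda$, yielding the contradiction. You instead probe the identity with point masses to force the structure $h(a,c)=r_a\,\delta_{a,c}$, then equalize the constants $r_a$ by evaluating on the full-support uniform distribution, and finally transfer to arbitrary $\probex{A}$ by dividing at any $c$ with $\probex{A}(c)>0$. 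Both arguments are sound; yours is arguably more elementary and has the bonus that it establishes the copy property $\sum_{b}\delta_{\ctelefunc(a,b,c),1}\cteleprob_B(b)\cteleprob_C(c)=\teleprob\,\delta_{a,c}$ along the way, which the paper proves separately as \cref{lem: copy property aug} (there, the paper's proof of that lemma uses exactly your point-mass substitution, but only after independence of $\teleprob$ has been secured, so the logical order is reversed relative to yours). The paper's convexity argument, on the other hand, is more abstract and does not rely on identifying the explicit kernel structure, which could make it more robust in settings where point masses are not available.
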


\begin{restatable}{lemma}{copyprop}
\label{lem: copy property aug}
    Consider a classical teleportation protocol defined by the pair $(\ctelefunc,\cteleprob_B,\cteleprob_C)$. Then, it holds:
    \begin{equation}
    \sum_{\substack{b\in\outcomemaparg{B}}} \delta_{\ctelefunc(a,b,c), 1} \cteleprob_B(b)\cteleprob_C(c)=\teleprob\delta_{a,c}.
    \end{equation}
\end{restatable}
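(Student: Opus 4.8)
The plan is to exploit the universal quantifier built into~\cref{def:classical ps tele}: the defining identity of a classical post-selected teleportation protocol is required to hold for \emph{every} probability distribution $\probex{A}$ on $\outcomemaparg{A}$, so in particular it must hold for the deterministic (point-mass) distributions. Concretely, I would fix an arbitrary $a_0\in\outcomemaparg{A}$ and apply the defining equation to the distribution $\probex{A}(a)=\delta_{a,a_0}$, which is a legitimate probability distribution on the non-empty finite set $\outcomemaparg{A}$. Substituting this choice, the sum over $a$ collapses through $\sum_{a}\delta_{a,a_0}\,g(a)=g(a_0)$, while the right-hand side $\teleprob\,\probex{A}(c)$ becomes $\teleprob\,\delta_{a_0,c}$. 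This immediately yields
\begin{equation}
    \sum_{b\in\outcomemaparg{B}} \delta_{\ctelefunc(a_0,b,c),1}\,\cteleprob_B(b)\,\cteleprob_C(c) = \teleprob\,\delta_{a_0,c}
\end{equation}
for all $c\in\outcomemaparg{C}$. Since $a_0$ was arbitrary, renaming $a_0$ back to $a$ gives the claimed identity.

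The only point that deserves a word of care is that the constant $\teleprob$ appearing on the right-hand side is the \emph{same} number for every choice of $\probex{A}$ — in particular for the point masses used above — rather than a quantity that could a priori depend on the input distribution. This is exactly the content of the preceding lemma on the independence of the success probability from the teleported distribution (and is also how $\teleprob$ is phrased in~\cref{def:classical ps tele}, namely as a fixed feature of the protocol). Granting this, there is no genuine obstacle: the statement is a direct specialization of the protocol's defining property to deterministic inputs, formalizing the intuition that a protocol which faithfully teleports every distribution on $A$ to $C$ must in particular faithfully transmit every definite value of $A$, which is why the Kronecker delta $\delta_{a,c}$ (a ``copy'' of $a$ onto $c$) emerges on the right-hand side.
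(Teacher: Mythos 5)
Your proposal is correct and matches the paper's own proof: both apply the defining identity of \cref{def:classical ps tele} to a point-mass distribution $\probex{A}(a)=\delta_{a,\bar a}$, collapse the sum over $a$, and rename the fixed point back to $a$. Your additional remark that $\teleprob$ must be the same constant across all input distributions (guaranteed by the preceding lemma) is a fair point of care that the paper leaves implicit.
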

The previous lemma shows that the systems $A$ and $C$ act as copies of each other, given the post-selection.
In a classical theory, variables can be copied and broadcast to multiple parties\footnote{This is not the case for quantum mechanical systems, thus there is no analogue of broadcasting quantum teleportation protocols in~\cite{Quantum_paper}.}. For this reason, one can broadcast the outcome of a classical post-selected teleportation protocol to $n$ copies of $C$ for any finite $n$. 
Diagrammatically, we can represent this as follows
\begin{align}
\label{eq:diagram_cps_broad}
    \centertikz{
    \begin{scope}[xscale=1.2]
        \node[onode] (A) at (0,0) {$A$};
        \node[onode] (pre) at (1,0) {$C$};
        \node[psnode] (post) at (0.5,1) {$T$};
        \draw[cleg] (pre) -- (post);
        \draw[cleg] (A) -- (post);
        \draw[cleg] (pre) -- (1,1);
        \node[rotate=-15] at (1.3, 0.9) {\small$\dots$};
        \draw[cleg] (pre) -- (1.6,0.8);
    \end{scope}
    }
    \equiv
    \centertikz{
    \begin{scope}[xscale=1.2]
        \node[onode] (A) at (0,0) {$A$};
        \draw[cleg] (A) -- (0,1);
        \node[rotate=-20] at (0.3, 0.9) {\small$\dots$};
        \draw[cleg] (A) -- (0.6,0.8);
    \end{scope}
    },
\end{align}
where the equivalence has to be understood at the level of probabilities conditioned on the outcome of $T$ being $t=1$.
Broadcasting a classical post-selected teleportation protocol to $n$ copies of $C$ is naturally connected to a protocol consisting of $n$ copies of the same classical post-selected teleportation protocol. 
Indeed, given $n$ copies of the same protocol we have the following result
\begingroup
      \crefname{equation}{\textup{eq.}}{\textup{Eq.}}
\begin{equation}
\label{eq:from broadcast to copies}
    \centertikz{
        \node[onode] (A) at (0.5,0) {$A$};
    \begin{scope}[xscale=1.2]
        \node[rotate=-20] at (0.55, 1.2) {\small$\dots$};
    \end{scope}
    \begin{scope}[xscale=1.2,shift={(0,2)}, rotate=30]
        \node[onode] (pre) at (1,0) {$C_1$};
        \node[psnode,rotate=30] (post) at (0.5,1) {$\postvertname_1$};
        \draw[cleg] (pre) -- (post);
        \draw[cleg] (pre) -- (1.5,1);
    \end{scope}
    \begin{scope}[xscale=1.2]
         \draw[cleg] (A) -- (post);
    \end{scope}
    \begin{scope}[xscale=1.2,shift={(0.95,0.5)}, rotate=0]
        \node[onode] (pre) at (1,0) {$C_n$};
        \node[psnode] (post) at (0.5,1) {$\postvertname_n$};
        \draw[cleg] (pre) -- (post.320);
        \draw[cleg] (pre) -- (1.5,1);
    \end{scope}
    \begin{scope}[xscale=1.2]
         \draw[cleg] (A) -- (post);
    \end{scope}
    } \stackrel{\textup{\cref{eq:diagram_cps}}}{\equiv}  \centertikz{
    \begin{scope}[xscale=1.2]
        \node[onode] (A) at (0,0) {$A$};
        \draw[cleg] (A) -- (0,1);
        \node[rotate=-20] at (0.3, 0.9) {\small$\dots$};
        \draw[cleg] (A) -- (0.6,0.8);
    \end{scope}    }\stackrel{\textup{\cref{eq:diagram_cps_broad}}}{\equiv} \centertikz{
    \begin{scope}[xscale=1.2]
        \node[onode] (A) at (0,0) {$A$};
        \node[onode] (pre) at (1,0) {$C$};
        \node[psnode] (post) at (0.5,1) {$T$};
        \draw[cleg] (pre) -- (post);
        \draw[cleg] (A) -- (post);
        \draw[cleg] (pre) -- (1,1);
        \node[rotate=-15] at (1.3, 0.9) {\small$\dots$};
        \draw[cleg] (pre) -- (1.6,0.8);
    \end{scope}
    }.
\end{equation}
\endgroup
The equivalence has to be understood at the level of conditional probabilities: explicitly, it holds that
\begin{align} 
    \probfacyc(\{c_k\}_{k=1,\dots,n}|\{t_k=1\}_{k=1,\dots,n}) 
    &= \probex{A}(a = c_1) \delta_{c_2,c_1} \dots \delta_{c_n,c_1} \nonumber\\
    &= \probfacyc(c = c_1| t=1) \delta_{c_2,c_1} \dots \delta_{c_n,c_1}  
\end{align}
where $t_k$ is the variable associated to the $k$-th post-selection vertex $T_k$ on the leftmost diagram of \cref{eq:from broadcast to copies} and $t$ is the variable associated to the post-selection vertex $T$ on the rightmost diagram of \cref{eq:from broadcast to copies}. This holds for an arbitrary distribution $\probex{A}$ on the vertex $A$, taken to be the same in all three diagrams.
Notice that this implies, for all $i=1,\dots, n$,
\begin{equation}
    \probfacyc(c_i=c|\{t_k=1\}_{k=1,\dots,n}) = \probex{A}(a=c) = \probfacyc(c|t=1).
\end{equation}

In principle, there could be many triples $(\ctelefunc,\cteleprob_B,\cteleprob_C)$ which implement a classical post-selected teleportation protocol. Therefore, we define the following as canonical choice. In~\cref{app:proofs map}, we show that it is a valid classical post-selected teleportation protocol with $\teleprob = 1/|\outcomemaparg{A}|$.

\begin{definition}[Uniform prior post-selected teleportation protocol]
\label{def:uniform prior tele}
The canonical choice of classical post-selected teleportation protocol consists of choosing
\begin{equation}
    \ctelefunc(a,b, c)=\ctelefunc(a, c) = \delta_{a, c} \quad \text{and} \quad  \cteleprob_C(c) =\frac{1} {|\outcomemaparg{A}|},
\end{equation}
for all $a,c \in \outcomemaparg{A}$. Since $\ctelefunc$ is independent of $B$, $\outcomemaparg{B}$ and $\cteleprob_B$ are irrelevant. We refer to this choice as \textup{uniform prior} post-selected teleportation protocol.
\end{definition}

\subsection{Family of acyclic functional models from a cyclic functional model}
\label{sec:fCM_to_afCM_map}
Using classical post-selected teleportation protocols, we construct a family of acyclic functional models from a given cyclic functional model.
First, we define the family of acyclic graphs underlying these functional models. 
The proofs of the results in this section are provided in~\cref{app:proofs map}. 

\begin{definition}[Family of acyclic classical teleportation graphs $\graphfamilysn{\graphname}$]
    \label{def:sn_graph_family}
    Given a directed graph $\graphname=\graphexpl$,
    we define an associated family $\graphfamilysn{\graphname}$ of acyclic graphs, where each element $\graphname\sn\in\graphfamilysn{\graphname}$ is obtained from the graph $\graphname$ as follows.
    \begin{myitem}
        \item Choose any subset of vertices $\splitvert{\graphname\sn}\subseteq\vertset$, such that the subgraph $\graphname'=(\vertset',\edgeset')$ of $\graphname$ with $\vertset'=\vertset$ and $\edgeset'=\edgeset\setminus\left(\bigcup_{\vertname\in\splitvert{\graphname\sn}}\outedges{\vertname}\right)$ is acyclic. We refer to $\splitvert{\graphname\sn}$ as the split vertices of $\graphname\sn$. 
        \item The vertices $\vertset\sn$ of $\graphname\sn$ consist of the vertices $\vertset$ of the original graph $\graphname$ together with new vertices $\prevertname_\vertname, \postvertname_\vertname$ for each split vertex $\vertname \in \splitvert{\graphname\sn}$, i.e.,
        \begin{align}
            \vertset\sn = \vertset 
            \cup \{ \prevertname_\vertname \}_{\vertname\in\splitvert{\graphname\sn}} 
            \cup \{ \postvertname_\vertname \}_{\vertname\in\splitvert{\graphname\sn}}.
        \end{align}
        \item The edges $\edgeset\sn$ of $\graphname\sn$ consist of the edges $\edgeset'$ of the subgraph $\graphname'$ together with the following new edges:
        \begin{align}
            \edgeset\sn = \edgeset' 
            \cup \{\edgearg{\vertname}{\postvertname_\vertname}\}_{\vertname\in\splitvert{\graphname\sn}}
            &\cup \{\edgearg{\prevertname_\vertname}{\postvertname_\vertname}\}_{\vertname\in\splitvert{\graphname\sn}} \nonumber\\
            &\cup \{\edgearg{\prevertname_\vertname}{\vertname'}\}_{\vertname\in\splitvert{\graphname\sn}, \vertname' \in \childnodes{\vertname}_{\graphname}},
        \end{align}
        where $\childnodes{\vertname}_{\graphname}$ refers to the children vertices of $\vertname$ in the graph $\graphname$.     
    \end{myitem}
    For each $\vertname\in\splitvert{\graphname\sn}$, we refer to the vertices $\prevertname_\vertname$ and $\postvertname_\vertname$ as pre- and post-selection vertices respectively and depict them with distinct vertex styles  $\centertikz{\node[psnode] {$\postvertname_\vertname$};}$ and $\centertikz{\node[prenode] {$\prevertname_\vertname$};}$, as these will play a special role in our framework.
    This construction makes $\graphname\sn$ identical to $\graphname$ up to replacing all vertices $\vertname\in\splitvert{\graphname\sn}$ with the following structure
    \begin{equation}
\label{eq:split_node}
    \centertikz{
        \node[onode] (v) {$\vertname$};
        \draw[cleg] ($(v)-(0.5,0.8)$) -- (v.240);
        \draw[cleg] ($(v)-(-0.5,0.8)$) -- (v.300);
        \draw[cleg] (v.60) -- ($(v)+(0.5,0.8)$);
        \draw[cleg] (v.120) -- ($(v)+(-0.5,0.8)$);
        \node [above = 0.4*\chanvspace of v] {\indexstyle{\dots}};
        \node [below = 0.4*\chanvspace of v] {\indexstyle{\dots}};
        \draw [thick, decoration={brace},decorate]($(v)-(-0.5,1)$) -- ($(v)-(0.5,1)$) node [pos=0.5,anchor=north] {\indexstyle{\parnodes{\vertname}}};
        \draw [thick, decoration={brace},decorate]($(v)+(-0.5,1)$) -- ($(v)+(0.5,1)$) node [pos=0.5,anchor=south] {\indexstyle{\childnodes{\vertname}}}; 
    } \mapsto
    \centertikz{
        \node[onode] (vin) at (0,0) {$\vertname$};
        \draw[cleg] ($(vin)-(0.5,0.8)$) -- (vin.240);
        \draw[cleg] ($(vin)-(-0.5,0.8)$) -- (vin.300);
         \node[prenode] (vout) at (2.5,0) {$\prevertname_\vertname$};
        \draw[cleg] (vout.40) -- ($(vout)+(0.7,0.8)$);
        \draw[cleg] (vout.100) -- ($(vout)+(-0.3,0.8)$);
        \node [above = 0.4*\chanvspace of vout.60] {\indexstyle{\dots}};
        \node [below = 0.4*\chanvspace of vin] {\indexstyle{\dots}};
        \draw [thick, decoration={brace},decorate]($(vin)-(-0.5,1)$) -- ($(vin)-(0.5,1)$) node [pos=0.5,anchor=north] {\indexstyle{\parnodes{\vertname}_{\graphname}}};
        \draw [thick, decoration={brace},decorate]($(vout)+(-0.3,1)$) -- ($(vout)+(0.7,1)$) node [pos=0.5,anchor=south] {\indexstyle{\childnodes{\vertname}_{\graphname}}}; 
        \node[psnode] (ps) at (1,2) {$\postvertname_\vertname$};
        \draw[cleg] (vout.140) -- (ps.300);
        \draw[cleg] (vin.90) -- (ps.240);
    }.
\end{equation}
Each element of the family $\graphname\sn\in\graphfamilysn{\graphname}$ is called a classical \textup{teleportation graph}. The set of all pre- and post-selection vertices in $\graphname\sn$ are denoted as $\prevertset = \{\prevertname_\vertname\}_{\vertname\in\splitvert{\graphname\sn}}$ and $\psvertset = \{\postvertname_\vertname\}_{\vertname\in\splitvert{\graphname\sn}}$.
\end{definition}

The following lemma, proven in \cref{app:proofs map}, shows that classical teleportation graphs constructed as above are indeed acyclic.

\begin{restatable}[Acyclicity of classical teleportation graphs]{lemma}{acyclicity}
\label{lemma: acyclicity_cl_telegraphs}
Given a directed graph $\graphname$, each classical teleportation graph $\graphname\sn\in\graphfamilysn{\graphname}$ is acyclic.
\end{restatable}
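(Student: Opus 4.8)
The plan is to show that the only directed cycles that could conceivably appear in a classical teleportation graph $\graphname\sn$ would already live inside the original vertex set $\vertset$, and then to invoke the defining property of the split vertices, namely that the subgraph $\graphname' = (\vertset,\edgeset')$ with $\edgeset' = \edgeset\setminus\big(\bigcup_{\vertname\in\splitvert{\graphname\sn}}\outedges{\vertname}\big)$ is acyclic. The key observation is that every newly added vertex is either a source or a sink of $\graphname\sn$, and hence cannot lie on any directed cycle.

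Concretely, I would go through the four families of edges constituting $\edgeset\sn$ in \cref{def:sn_graph_family} and record, for each pre-selection vertex $\prevertname_\vertname$ and each post-selection vertex $\postvertname_\vertname$, in which families it can occur and with which orientation. One finds that every $\postvertname_\vertname$ appears only as the head of an edge (in the families $\{(\vertname,\postvertname_\vertname)\}$ and $\{(\prevertname_\vertname,\postvertname_\vertname)\}$) and never as a tail, so $\postvertname_\vertname$ has no outgoing edges in $\graphname\sn$; dually, every $\prevertname_\vertname$ appears only as the tail of an edge (in the families $\{(\prevertname_\vertname,\postvertname_\vertname)\}$ and $\{(\prevertname_\vertname,\vertname')\}_{\vertname'\in\childnodes{\vertname}_{\graphname}}$) and never as a head, so $\prevertname_\vertname$ has no incoming edges in $\graphname\sn$. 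A vertex with no outgoing edges, or with no incoming edges, cannot be traversed by a directed cycle, so no directed cycle of $\graphname\sn$ contains any vertex of $\psvertset\cup\prevertset$.

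It then follows that any hypothetical directed cycle of $\graphname\sn$ uses only vertices of $\vertset\sn\setminus(\psvertset\cup\prevertset)=\vertset$, and therefore only edges of $\graphname\sn$ whose two endpoints both lie in $\vertset$. From the same case analysis, such edges are precisely those of $\edgeset'$, since each of the other three families of edges in $\edgeset\sn$ involves a pre- or post-selection vertex; in other words, the subgraph of $\graphname\sn$ induced on $\vertset$ is exactly $\graphname'$. A directed cycle of $\graphname\sn$ would thus be a directed cycle of $\graphname'$, contradicting the hypothesis in \cref{def:sn_graph_family} that $\splitvert{\graphname\sn}$ is chosen so that $\graphname'$ is acyclic. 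Hence $\graphname\sn$ is acyclic.

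This is essentially a bookkeeping argument rather than a substantial one, so I do not expect a genuine obstacle; the only point deserving a moment of care is to verify, directly from the definition of $\edgeset\sn$, both that the new vertices are genuine sources and sinks and that no newly added edge joins two original vertices — so that restricting an alleged cycle to $\vertset$ lands one back exactly in $\graphname'$ rather than in some larger subgraph — after which acyclicity is immediate.
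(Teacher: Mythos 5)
Your proof is correct and rests on exactly the same observations as the paper's: the pre-selection vertices are sources, the post-selection vertices are sinks, and the edges among the original vertices are precisely $\edgeset'$, so acyclicity reduces to that of $\graphname'$. The paper packages this as a layered drawing (pre-selection vertices at the bottom, $\graphname'$ in the middle, post-selection vertices at the top), whereas you argue directly that no cycle can traverse a source or a sink; these are interchangeable phrasings of the same argument.
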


The next step of the mapping of cyclic functional models to acyclic functional models with post-selection involves defining functional models on classical teleportation graphs. 

\begin{definition}[Family of functional models on acyclic classical teleportation graphs]
\label{def: family functional cm}
Given a functional model on a directed graph $\graphname=\graphexpl$, $\fcm_{\graphname}$, we can construct a family of functional models by defining a functional causal model, $\fcm_{\graphname\sn}$, on each acyclic classical teleportation graph $\graphname\sn\in\graphfamilysn{\graphname}$(\cref{def:sn_graph_family}), as follows:
\begin{myitem}
    \item Using the same notation as in~\cref{def:sn_graph_family}, for each split vertex $\vertname\in\splitvert{\graphname\sn}$ of $\graphname\sn$ the functional model $\fcm_{\graphname\sn}$, has the following specifications:
    \begin{enumerate}
        \item To the post-selection vertex $\postvertname_\vertname$, we associate a random variable taking values from $\outcomemaparg{\postvertname_\vertname} = \{0,1\}$, an error variable $\errorrvarg{\postvertname_\vertname}$ with distribution $\probex{\postvertname_\vertname}$ and a function $\funcarg{\postvertname_\vertname}$.
        \item To the pre-selection vertex $\prevertname_\vertname$, we associate a random variable taking values from $\outcomemaparg{\prevertname_\vertname}=\outcomemaparg{\vertname}$, an error variable $\errorrvarg{\prevertname_\vertname}$ with distribution $\probex{\prevertname_\vertname}$ and a function $\funcarg{\prevertname_{\vertname}}(u_{\prevertname_\vertname})=u_{\prevertname_\vertname}$\footnote{As discussed in the remark of~\cref{remark:errorrv}, this is equivalent to directly considering the variable $\prevertname_\vertname$ as having distribution $\probex{\prevertname_\vertname}$.}.

        \item The probability distributions $\probex{\prevertname_\vertname}$, $\probex{\postvertname_\vertname}$ together with the function $\funcarg{\postvertname_\vertname}$ form a classical teleportation protocol (\cref{def:classical ps tele}).
        \end{enumerate}
        \item For each vertex present in both $\graphname$ and $\graphname\sn$, i.e., each vertex except the pre- and post-selection vertices, the assigned finite sets, error variables and functional dependencies are the same in $\fcm_{\graphname\sn}$ and $\fcm_{\graphname}$, where for all $\vertname\in\splitvert{\graphname\sn}$ the corresponding preselection vertex $\prevertname_\vertname$ plays the role of the original vertex $\vertname$ in the inputs of the relevant functions, i.e., $\funcarg{\vertname'}$ for $\vertname'\in\childnodes{\vertname}_{\graphname}$.
\end{myitem}
We will refer to functional models in this family as \textup{classical teleportation functional models}.
\end{definition}

Any classical teleportation model $\fcm_{\graphname\sn}$ constructed according to \cref{def: family functional cm} is defined over an acyclic graph, thus, we can compute its probabilities using the acyclic probability rule of \cref{def: distribution_functional_cm}.
Denoting $\psvertset$ and $\prevertset$ as the set of all pre- and post-selection vertices of $\graphname\sn$, its total set of vertices is $\vertset\cup\prevertset\cup\psvertset$. 
Therefore, by applying the acyclic probability rule of \cref{def: distribution_functional_cm} to $\fcm_{\graphname\sn}$, we can compute
\begin{equation}
    \probfacyc\left(\outcome,r, \{\postoutcome_\postvertname = 1\}_{\postvertname\in\psvertset}\right)_{\graphname\sn},
\end{equation}
 where $\outcome=\{\outcome_\vertname\}_{\vertname\in\vertset}$ and $r=\{r_{\prevertname}\}_{\prevertname\in\prevertset}$. 
 From this probability we can define the probability of successful post-selection.
    
\begin{definition}[Post-selection success probability.]
\label{def: c_success_probability}
    Consider a functional model on a directed graph $\graphname$ and a functional model of the family in~\cref{def: family functional cm} with associated graph $\graphname\sn$.  We define the probability of successful post-selection of the classical teleportation functional model as
    \begin{equation}
        \csuccessprob := 
    \probfacyc\left(\{\postoutcome_\postvertname = 1\}_{\postvertname\in\psvertset}\right)_{\graphname\sn}
    =\sum_{\outcome,r}
    \probfacyc\left(\outcome,r,\{\postoutcome_\postvertname = 1\}_{\postvertname\in\psvertset}\right)_{\graphname\sn},
    \end{equation}
    where the summation is performed over $\outcome=\{\outcome_\vertname\in\outcomemaparg{\vertname}\}_{\vertname\in\vertset}$ and 
    $r = \{r_{\prevertname} \in \outcomemaparg{\prevertname}\}_{\prevertname \in \prevertset}$.
\end{definition}

Since the preselection vertices act as copies of the split vertices, we marginalize over them to obtain
\begin{equation}
    \probfacyc\left(\outcome, \{\postoutcome_\postvertname = 1\}_{\postvertname\in\psvertset}\right)_{\graphname\sn} = \sum_r\probfacyc\left(\outcome,r, \{\postoutcome_\postvertname = 1\}_{\postvertname\in\psvertset}\right)_{\graphname\sn}
\end{equation}
and focus on the conditional probability 

\begin{equation}
    \probfacyc\left(\outcome|\{\postoutcome_\postvertname = 1\}_{\postvertname\in\psvertset}\right)_{\graphname\sn} = \frac{\probfacyc\left(\outcome,\{\postoutcome_\postvertname = 1\}_{\postvertname\in\psvertset}\right)_{\graphname\sn}}{\csuccessprob}
\end{equation}
which will be the probability of interest in our framework.
The following proposition shows that this is independent of the choice of teleportation graph $\graphname\sn\in\graphfamilysn{\graphname}$.

\begin{restatable}[Equivalent probabilities from different classical teleportation graphs]{prop}{differentgraphsequiv}
        \label{lemma: acyclic_prob_same_func}
  Let $\fcm_{\graphname}$ be a functional model on a directed graph $\graphname$.
  Consider any $\graphname_1,\graphname_2 \in \graphfamilysn{\graphname}$, and for $i\in\{1,2\}$, let $\fcm_{\graphname_i}$ be a causal model on the teleportation graph $\graphname_i$ constructed according to \cref{def: family functional cm}. Denoting the set of all post-selection vertices of $\graphname_1,\graphname_2$ as $\psvertset^1$ and $\psvertset^2$ respectively, we have
  \begin{align}
      \probfacyc\left(\outcome \middle| \{\postoutcome_\postvertname = 1\}_{\postvertname\in\psvertset^1}\right)_{\graphname_1} =      \probfacyc\left(\outcome \middle| \{\postoutcome_\postvertname = 1\}_{\postvertname\in\psvertset^2}\right)_{\graphname_2},
  \end{align} 
  for all joint observed events $\outcome = \{\outcome_\vertname \in \outcomemaparg{\vertname}\}_{\vertname\in\vertset}$.
\end{restatable}
The above proposition is proven in~\cref{app:proofs map}.

\subsection{General probability rule for cyclic functional causal models}
\label{sec:probability rule}

In~\cref{sec:fCM_to_afCM_map}, we have constructed a family of acyclic functional models associated with a given, potentially cyclic, functional model~(\cref{def: family functional cm}). 
We have proven that the acyclic distribution conditioned on successful post-selection is independent of which functional model we choose in this family.
Here, we use these results to define a probability rule for the cyclic functional model underlying the family.
The proofs of the results presented in this section can be found in~\cref{app:proofs map}.

\begin{definition}[Probability of a cyclic functional models]
    \label{def: probability distribution cyclic fcm}
    \hypertarget{probf}{Consider} a functional model on a graph $\graphname$. Let $\graphname\sn\in\graphfamilysn{\graphname}$ be a classical teleportation graph with associated functional model $\fcm_{\graphname\sn}$ derived from $\fcm_{\graphname}$ according to~\cref{def: family functional cm} and let $\csuccessprob$ be the probability of successful post-selection as in~\cref{def: c_success_probability}. If $\csuccessprob > 0$, the probability of the joint observed event $x =\{\outcome_{\vertname}\in\outcomemaparg{\vertname}\}_{\vertname\in\vertset}$ in $\fcm_{\graphname}$ is defined as
    \begin{equation}
        \probf(\outcome)_{\graphname}:=  \probfacyc\left(\outcome|\{\postoutcome_\postvertname = 1\}_{\postvertname\in\psvertset}\right)_{\graphname\sn} = \frac{\probfacyc\left(\outcome,\{\postoutcome_\postvertname = 1\}_{\postvertname\in\psvertset}\right)_{\graphname\sn}}{\csuccessprob}.
    \end{equation}
If $\csuccessprob = 0$, we say that the model is inconsistent and the probabilities $\probf(\outcome)_{\graphname}$ are undefined.
\end{definition}

In the above definition of $  \probf$, there are two choices involved. 
One first needs to choose a teleportation graph $\graphname\sn\in\graphfamilysn{\graphname}$, and secondly, once $\graphname\sn$ is fixed, there is still freedom in choosing a specific implementation of classical post-selected teleportation protocol for each pair of pre- and post-selection vertices in $\graphname\sn$. We have already shown in \cref{lemma: acyclic_prob_same_func} that the definition above is independent of the first choice. In the following, we show that it is also independent of the second choice. 

\begin{restatable}[Alternative formulation of the probability rule]{prop}{selfcyclecl} 
\label{prop:connecting channels}
Consider a functional model $\fcm_\graphname$ on a directed graph $\graphname=\graphexpl$. If the model is not inconsistent, an alternative expression for the probability rule in~\cref{def: distribution_functional_cm} is given by
\begin{equation}
    \probf(\outcome)_{\graphname} = \frac{
    \sum_{u}\prod_{\vertname\in\vertset}\probex{\vertname}(u_\vertname) \delta_{\outcome_{\vertname}, \funcarg{\vertname}\left(\outcome_{\parnodes{\vertname}},u_{\vertname}\right)}
    }{
    \sum_{y}\sum_{u}\prod_{\vertname\in\vertset}\probex{\vertname}(u_\vertname) \delta_{y_{\vertname}, \funcarg{\vertname}\left(y_{\parnodes{\vertname}},u_{\vertname}\right)}
    }
\end{equation}
where we have defined the global event $x =\{\outcome_{\vertname}\in\outcomemaparg{\vertname}\}_{\vertname\in\vertset}$, the sum $\sum_{y}$ runs over all $y = \{y_\vertname\in\outcomemaparg\vertname\}_{\vertname\in\vertset}$ and $\sum_{u}$ over $u=\{u_\vertname\in\errormaparg{\vertname}\}_{\vertname\in\vertset}$.
\end{restatable}

The proof of the above proposition is given in~\cref{app:proofs map}. Observing that by definition the success probability of a post-selected teleportation protocol is strictly greater than zero, the following corollary immediately follows from \cref{prop:connecting channels}.

\begin{restatable}[Equivalent probabilities from different implementations]{corollary}{indepimplementation} 
\label{corollary:probs indep of tele implementation classic}
Consider a functional model $\fcm_{\graphname}$ on a directed graph $\graphname$ and any classical teleportation model in the family of classical teleportation functional models derived from $\fcm_{\graphname}$ according to \cref{def: family functional cm}. 
  It holds that whether or not the probabilities $\probf(\outcome)_{\graphname}$ of $\fcm_{\graphname}$ (given by \cref{def: probability distribution cyclic fcm}) are defined and the values they take (if they are defined) do not depend on the choices of the post-selected teleportation protocols (\cref{def:classical ps tele}) one makes in the construction of $\fcm_{\graphname\sn}$ from $\fcm_{\graphname}$ (\cref{def: family functional cm}).
\end{restatable}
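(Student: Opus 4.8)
The plan is to obtain \cref{corollary:probs indep of tele implementation classic} as an essentially immediate consequence of \cref{prop:connecting channels}. Indeed, the expression for $\probf(\outcome)_{\graphname}$ given there is written purely in terms of the data of the original model $\fcm_{\graphname}$ --- the graph $\graphname$, the error distributions $\probex{\vertname}$, and the functions $\funcarg{\vertname}$ --- with no reference to the choice of teleportation graph $\graphname\sn\in\graphfamilysn{\graphname}$ or to the triples implementing the pre/post-selection gadgets. So the part of the statement asserting that the \emph{values} of the probabilities, when defined, are independent of all these choices follows by inspection of \cref{prop:connecting channels}. The only thing left to check is that \emph{whether} the probabilities are defined --- equivalently, whether the model is inconsistent in the sense of \cref{def: probability distribution cyclic fcm} --- does not depend on the choices either.

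For this I would unwind the computation of $\csuccessprob$ already performed in the proof of \cref{prop:connecting channels}. Marginalising the acyclic distribution of $\fcm_{\graphname\sn}$ (\cref{def: family functional cm}) over the observed variables $\outcome$ and the pre-selection variables $r$, and applying the copy identity of \cref{lem: copy property aug} to each pre/post-selection gadget attached to a split vertex $\vertname\in\splitvert{\graphname\sn}$, every such gadget contributes a multiplicative factor equal to its teleportation success probability $\teleprob_{\vertname}$ and then forces $r_{\prevertname_{\vertname}}=\outcome_{\vertname}$, so that the pre-selection vertex behaves exactly like the original vertex $\vertname$ in the inputs of the relevant functions. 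What survives is
\[
  \csuccessprob \;=\; \Bigl(\,\textstyle\prod_{\vertname\in\splitvert{\graphname\sn}}\teleprob_{\vertname}\Bigr)\;\sum_{y}\sum_{u}\prod_{\vertname\in\vertset}\probex{\vertname}(u_\vertname)\,\delta_{y_{\vertname},\,\funcarg{\vertname}(y_{\parnodes{\vertname}},u_{\vertname})}.
\]
Since each $\teleprob_{\vertname}\in(0,1]$ by \cref{def:classical ps tele}, the prefactor is strictly positive for every choice of $\graphname\sn$ and of the protocols; hence $\csuccessprob>0$ if and only if the $\probex{}$-weighted number of solutions of $\fcm_{\graphname}$ --- the double sum on the right --- is strictly positive, and that quantity depends on $\fcm_{\graphname}$ alone.

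Combining the two observations finishes the argument: if $\fcm_{\graphname}$ admits no ($\probex{}$-weighted) solution then $\csuccessprob=0$ for every choice, so the probabilities are undefined in all cases; otherwise $\csuccessprob>0$ for every choice, \cref{prop:connecting channels} applies, and $\probf(\outcome)_{\graphname}$ equals the choice-independent ratio displayed there. I do not expect a genuine obstacle here: the substantive step is the factorisation of $\csuccessprob$ into a strictly positive prefactor times a solution count, which is exactly what the proof of \cref{prop:connecting channels} establishes, so the remaining work is just careful bookkeeping of the pre-selection marginalisation and of which quantities do or do not see the protocol choice.
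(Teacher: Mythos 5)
Your proposal is correct and follows essentially the same route as the paper: the paper derives this corollary as an immediate consequence of \cref{prop:connecting channels}, noting that the ratio there involves only the data of $\fcm_{\graphname}$ and that the teleportation success probabilities are strictly positive, so that $\csuccessprob$ vanishes if and only if the choice-independent denominator sum does. Your explicit unwinding of $\csuccessprob$ into $\bigl(\prod_{\vertname}\teleprob_{\vertname}\bigr)$ times the $\probex{}$-weighted solution count is exactly the factorization the paper establishes (and records separately as \cref{prop:ans to psuccess}), so no gap remains.
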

\Cref{prop:connecting channels} highlights the way our method picks a unique probability rule even for non-uniquely solvable models. For instance, consider the model introduced in~\cref{sec:fCMs} on the graph
\begin{equation}
    \graphname= \centertikz{
        \node[onode] (A) at (0,0) {$A$};
        \node[onode] (B) at (2,0) {$B$};
        \draw[cleg] (A) to [in=120, out=60] (B);
        \draw[cleg] (B) to[in=300, out=240] (A);
    },
\end{equation}
where $\outcomemap=\outcomemaparg{A}=\outcomemaparg{B}$, $\funcarg{A}(b)=b$ and $\funcarg{B}(a)=a$ for all $a,b\in\outcomemap$. These functional dependencies are deterministic, thus it is not necessary to specify error variables (see the remark of~\cref{remark:errorrv} for more details). As we showed in~\cref{sec:fCMs}, this functional model is not uniquely solvable (unless the set $\outcomemap$ has trivial cardinality) and in literature any distribution $\mathcal{P}$ such that $\mathcal{P}(A=B)=1$ is considered valid. The probability rule~\cref{def: probability distribution cyclic fcm} evaluated using~\cref{prop:connecting channels} gives
\begin{equation}
    \probf(a,b)_{\graphname} = \frac{
      \delta_{a,b}\delta_{b,a}
    }{
    \sum_{a',b'}
      \delta_{a',b'}\delta_{b',a'}
    } = \frac{
      \delta_{a,b}
    }{
    |\outcomemap|
    }.
\end{equation}
Notice that the probability rule of~\cref{def: probability distribution cyclic fcm} associates a uniform distribution to all allowed solutions of the functional model in this example. This is not a consequence of using as classical teleportation protocol the uniform prior one~(\cref{def:uniform prior tele}), as we have shown that the probability rule does not depend on which classical teleportation protocol one chooses (\cref{corollary:probs indep of tele implementation classic}). In principle, other classical teleportation protocols which are associated with a non-uniform prior on the pre-selection vertices could be considered, which would yield the same probability.

\subsection{Examples of cyclic functional causal models}
\paragraph{Detailed example illustrating our framework and probability rule.}
\label{example: probability rule detail}
Here, we illustrate through an example how to perform the mapping of a cyclic functional model to an acyclic one with post-selection, and evaluate the corresponding probability distribution in our framework. Consider the following cyclic graph:
\begin{equation}
\label{eq:example_cycle_classical_overview}
    \graphname=\centertikz{
        \node[onode] (v1) at (-1, 0) {$v_1$};
        \node[onode] (v2) at (1, 0) {$v_2$};
        \node[onode] (v4) at (2.5, 0) {$v_4$};
        \node[onode] (v3) at (-2.5, 0) {$v_3$};
        \draw[cleg] (v1.north) to[in = 120, out = 60] (v2.north);
        \draw[cleg] (v2.south) to[in = 300, out = 240] (v1.south);
        \draw[cleg] (v4) -- (v2);
        \draw[cleg] (v3) -- (v1);
    }.
\end{equation}
A functional model on this graph is defined by associating finite sets $\outcomemaparg{i}$ and error variables $U_i\in\errormaparg{i}$ with distribution $\probex{i}$ to each vertex $\vertname_i$ for $i=1,2,3,4$, and functions $\funcarg{1}(\outcome_2,\outcome_3,u_1)$ to $\vertname_1$, $\funcarg{2}(\outcome_1,\outcome_4,u_2)$ to $\vertname_2$, $\funcarg{3}(u_3)$ to $\vertname_3$ and $\funcarg{4}(u_4)$ to $\vertname_4$ for all $\outcome_i\in\outcomemaparg{i}$ and $i=1,2,3,4$. For simplicity, let the functions associated to endogenous nodes, $\funcarg{1}$ and $\funcarg{2}$, be independent on the error variables and $\funcarg{3}(u_3)=u_3$, $\funcarg{4}(u_4)=u_4$. In this case, we can simplify the probability rule as explained in the remark of~\cref{remark:errorrv}.

This functional model is mapped to an acyclic functional model with post-selection, where we can evaluate probabilities using the acyclic rule in~\cref{def: distribution_functional_cm}. The probabilities of the acyclic model (conditioned on successful post-selection) are used to define the desired distribution of the cyclic functional model. Explicitly, the procedure is given by the following steps for this example:
\begin{myitem}
    \item Consider a subset of vertices $\splitvert{\graphname\sn}$ such that the subgraph $\graphname'$ obtained from $\graphname$ through removing all outgoing edges of vertices in  $\splitvert{\graphname\sn}$ is acyclic, e.g., considering $\splitvert{\graphname\sn} = \{\vertname_2\}$ we have
    \begin{equation}
    \graphname'=\centertikz{
        \node[onode] (v1) at (-1, 0) {$v_1$};
        \node[onode] (v2) at (1, 0) {$v_2$};
        \node[onode] (v4) at (2.5, 0) {$v_4$};
        \node[onode] (v3) at (-2.5, 0) {$v_3$};
        \draw[cleg] (v1.north) to[in = 120, out = 60] (v2.north);
        \node at (0,-0.5) {};
        \draw[cleg] (v4) -- (v2);
        \draw[cleg] (v3) -- (v1);
    }.
\end{equation}
    \item Construct a classical teleportation graph $\graphname\sn$ by adding for each vertex $\vertname\in\splitvert{\graphname\sn}$ the pre- and post-selection vertices of a classical post-selected teleportation protocol (\cref{def:classical ps tele} and \cref{def:sn_graph_family}), i.e.,
    \begin{equation}
    \graphname\sn=
    \centertikz{
        \node[prenode] (pre) at (0,-2.5) {$\prevertname$};
        \node[onode] (v3) at (-1.5, -2.5) {$v_3$};
        \node[onode] (v1) at (-1, -1) {$v_1$};
        \node[onode] (v2) at (1, 1) {$v_2$};
        \node[onode] (v4) at (1.5, -0.5) {$v_4$};
        \node[psnode] (ps) at (0.3,2.5) {$\postvertname$};
        \draw[cleg] (v3.90) -- (v1.240);
        \draw[cleg] (pre.140) -- (v1.300);
        \draw[cleg] (pre.60) -- (ps.240);
        \draw[cleg] (v2.90) -- (ps.300);
        \draw[cleg] (v1.north) -- (v2.240);
        \draw[cleg] (v4.north) -- (v2.300);
    }.
    \end{equation}
    \item Define a functional model on $\graphname\sn$ by keeping the same associations of the original functional model to all vertices and edges that are preserved from $\graphname$ to $\graphname\sn$ and associating a teleportation protocol to the added vertices, e.g., to $\prevertname$ associate the set $\outcomemaparg{2}$ and a uniform probability distribution $\probex{\prevertname}(r)=1/|\outcomemaparg{2}|$ for all $r\in\outcomemaparg{2}$ and to $\postvertname$ associate the set $\{0,1\}$ and function $\funcarg{\postvertname}(x_2,r) = \delta_{x_2,r}$ for all $x_2,r\in\outcomemaparg{2}$.
    \item Evaluate the probability of the functional model on the acyclic graph $\graphname\sn$ 
    \begin{equation}
        \probfacyc(\{x_i\}_{i=1}^4, r, t=1)_{\graphname\sn} =  \frac{ \probex{3}(x_3)  \probex{4}(x_4) \delta_{x_1, f^1(r,x_3)} \delta_{x_2, f^2(x_1,  x_4)} \delta_{x_2,r} }{|\outcomemaparg{2}|},
    \end{equation}
    consider $r$ unobserved, i.e., marginalize $r$,
    \begin{equation}
        \probfacyc(\{x_i\}_{i=1}^4, t=1)_{\graphname\sn} =  \frac{ \probex{3}(x_3)  \probex{4}(x_4) \delta_{x_1, f^1(x_2,x_3)} \delta_{x_2, f^2(x_1,  x_4)}}{|\outcomemaparg{2}|}
    \end{equation}
    and post-select on $t=1$
    \begin{align}
    \label{eq:example_probability}
        \probfacyc(\{x_i\}_{i=1}^4| t=1)_{\graphname\sn} =&   \frac{\probfacyc(\{x_i\}_{i=1}^4, t=1)_{\graphname\sn}}{\probfacyc(t=1)_{\graphname\sn}} \\
        =& \frac{ \probex{3}(x_3)  \probex{4}(x_4) \delta_{x_1, f^1(x_2,x_3)} \delta_{x_2, f^2(x_1,  x_4)}}{\sum_{y_i}\probex{3}(y_3)  \probex{4}(y_4) \delta_{y_1, f^1(y_2,y_3)} \delta_{y_2, f^2(y_1,  y_4)}}
    \end{align}
   
    \item If $\csuccessprob = \probfacyc(t=1)_{\graphname\sn}>0$, define the probability distribution in the functional model on the cyclic graph $\graphname$ as the conditional probability in~\cref{eq:example_probability}, i.e.,
    \begin{equation}
        \probf(x_1,x_2,x_3,x_4)_{\graphname} :=  \probfacyc(\{x_i\}_{i=1}^4| t=1)_{\graphname\sn}.
    \end{equation}
    If $\csuccessprob = 0$, we say that the model is inconsistent and the probabilities are undefined.
\end{myitem}

Now we give a specific functional model on $\graphname$ (\cref{eq:example_cycle_classical_overview}) and evaluate the probability as above. Let us consider binary variables, i.e., $\outcomemaparg{i}=\{0,1\}$, let $\funcarg{1}(\outcome_2,\outcome_3)=\outcome_3\oplus \outcome_2$ and $\funcarg{2}(\outcome_1,\outcome_4)=\outcome_1\oplus \outcome_4$, and consider arbitrary distributions $\probex{3}$ and $\probex{4}$.
This model is not uniquely solvable (see~\cref{sec:solvability}). Indeed, if $\outcome_3\neq\outcome_4$ the model has no solutions, while if $\outcome_3=\outcome_4$ it has $2$ solutions each satisfying $\outcome_2=\outcome_1\oplus\outcome_3$.
From the joint distribution,
\begin{equation}
    \probf(\outcome_1,\outcome_2,\outcome_3,\outcome_4)_\graphname = \frac{\probex{3}(\outcome_3)\probex{4}(\outcome_4)\delta_{\outcome_1,\outcome_2\oplus\outcome_3}\delta_{\outcome_2,\outcome_1\oplus\outcome_4}}{\mathcal{N}}= \frac{\probex{3}(\outcome_3)\probex{4}(\outcome_4)\delta_{\outcome_1,\outcome_2\oplus\outcome_3}\delta_{\outcome_3,\outcome_4}}{\mathcal{N}},
\end{equation}
where 
\begin{align}
\begin{split}    \mathcal{N}=&\sum_{\outcomealt_1,\outcomealt_2,\outcomealt_3,\outcomealt_4}\probex{3}(\outcomealt_3)\probex{4}(\outcomealt_4)\delta_{\outcomealt_1,\outcomealt_2\oplus\outcomealt_3}\delta_{\outcomealt_3,\outcomealt_4}\\
=&\sum_{\outcomealt_1,\outcomealt_2}\probex{3}(0)\probex{4}(0)\delta_{\outcomealt_1,\outcomealt_2}+\probex{3}(1)\probex{4}(1)\delta_{\outcomealt_1,\outcomealt_2\oplus 1} = 2\left(\probex{3}(0)\probex{4}(0)+\probex{3}(1)\probex{4}(1)\right),
\end{split}
\end{align} we can evaluate the conditional probability for the case were $\outcome_3=\outcome_4=0$. Previous methods would suggest that any distributions with $\outcome_1=\outcome_2$ and arbitrary weights on their value being $0$ or $1$ are allowed. Contrary to that, our method uniquely fixes the probability distribution to
\begin{equation}
    \probf(\outcome_1,\outcome_2|0,0)_\graphname = \frac{\probex{3}(0)\probex{4}(0)\delta_{\outcome_1,\outcome_2}}{\probex{3}(0)\probex{4}(0)\sum_{\outcomealt_1,\outcomealt_2}\delta_{\outcomealt_1,\outcomealt_2}}= \frac{\delta_{\outcome_1,\outcome_2}}{2}.
\end{equation}
This is not a consequence of the specific teleportation protocol (\cref{def:uniform prior tele}) we chose, as any other protocol yields the same result (\cref{corollary:probs indep of tele implementation classic}).

\paragraph{Example of~\cite{VilasiniColbeckPRL}.}
We now consider a particular functional model introduced in \cite{VilasiniColbeckPRL} which does not admit unique solutions. The model was introduced to illustrate a result regarding the relativistic principles of no causal loops and no superluminal signalling in Minkowski space-time.
Here, we are only interested in the properties and probability distribution of the underlying functional model and show that these are recovered in our framework. Consider the graph
\begin{equation}
    \graphname= \centertikz{
        \node[onode] (A) at (0,0) {$A$};
        \node[onode] (C) at (2.5,0) {$C$};
        \node[onode] (B) at (1.25,1.25) {$B$};
        \node[onode] (L) at (1.25,-1.25) {$\Lambda$};
        \draw[cleg] (A) -- (B);
        \draw[cleg] (C.180) to[out=180, in=270] (B.270);
        \draw[cleg] (B) to[out=0, in=90] (C);
        \draw[cleg] (L) -- (A);
        \draw[cleg] (L) -- (C);
    },
\end{equation}
where all variables are binary and the following deterministic functions are associated to the vertices: $a=\funcarg{A}(\lambda):=\lambda$, $b=\funcarg{B}(a,c):=a\oplus c$ and $c=\funcarg{C}(\lambda,b):=\lambda\oplus b$ for all $a,b,c,\lambda\in\{0,1\}$, and let $\probex{\Lambda}$ be the distribution of the exogenous vertex $\Lambda$ (here we have again used the simplification given in the remark of~\cref{remark:errorrv} to ignore the error variables). Notice that for all values of $\Lambda$ the model has two solutions: indeed, if $\Lambda=0$, the triples $(a=0,b,c=b)$ are solutions for $b\in\{0,1\}$, and if $\Lambda=1$ the triples $(a=1,b,c=b\oplus 1)$ are solutions for $b\in\{0,1\}$.
This is reflected through the joint distribution of this model, evaluated with~\cref{prop:connecting channels}:
\begin{equation}
\label{eq:probability example VC22b}
\begin{split}
    \probf(a,b,c,\lambda=0)_\graphname = \frac{\probex{\Lambda}(0)\delta_{a,0}\delta_{c,b}\delta_{b,a\oplus c}}{\mathcal{N}}= \frac{\probex{\Lambda}(0)\delta_{a,0}\delta_{c,b}}{\mathcal{N}}\\
    \probf(a,b,c,\lambda=1)_\graphname = \frac{\probex{\Lambda}(1)\delta_{a,1}\delta_{c,b\oplus 1}\delta_{b,a\oplus c}}{\mathcal{N}}= \frac{\probex{\Lambda}(1)\delta_{a,1}\delta_{c,b\oplus 1}}{\mathcal{N}}
\end{split}
\end{equation}
where 
\begin{equation}
    \mathcal{N}=\sum_{a,b,c}\probex{\Lambda}(0)\delta_{a,0}\delta_{c,b}+\probex{\Lambda}(1)\delta_{a,1}\delta_{c,b\oplus 1}\delta_{b,a\oplus c}=2\left(\probex{\Lambda}(0)+\probex{\Lambda}(1)\right)=2.
\end{equation}
This shows again how the probability rule treats non-unique solutions.

In~\cite{VilasiniColbeckPRL}, the same example is analysed and a probability rule (which agrees with~\cref{eq:probability example VC22b}) is provided. The construction involves ``splitting'' vertices of the graph, in a way that is analogous to the construction of the elements of $\graphfamilysn\graphname$. Specifically, the construction in~\cite{VilasiniColbeckPRL} is obtained considering the acyclic graph 
\begin{equation}
\label{eq: example_VVRC}
    \tilde{\graphname}= \centertikz{
        \node[onode] (A) at (0,0) {$A$};
        \node[onode] (C) at (2.5,0) {$C$};
        \node[onode] (B) at (1.25,1.25) {$B$};
        \node[onode] (L) at (1.25,-1.25) {$\Lambda$};
        \node[onode] (pre) at (3,-1.25) {$B'$};
        \draw[cleg] (A) -- (B);
        \draw[cleg] (C.180) to[out=180, in=270] (B.270);
        \draw[cleg] (pre.90) -- (C);
        \draw[cleg] (L) -- (A);
        \draw[cleg] (L) -- (C);
    }
\end{equation}
where the vertex $B$ has been split into $B$ and $B'$ and consider the acyclic distribution $\probfacyc(a,b,c|B'=b)_{\tilde{\graphname}}$ where we post-select on the values of $B'$ and $B$ being equal and renormalise. This method is analogous to the expression derived in~\cref{prop:connecting channels}. Notice that $\tilde{\graphname}$ above is similar to a classical teleportation graph $\graphname\sn$ of $\graphname$ (\cref{eq: example_VVRC}) that would be obtained by taking $\splitvert{\graphname\sn}:=\{B\}$, except that it is missing a post-selection vertex that would be a common child of $B$ and $B'$ (as the post-selection was done implicitly in \cite{VilasiniColbeckPRL} and not represented in the graph).

\section{A sound and complete graph-separation property for cyclic functional models}
\label{sec:pseparation}
As we have seen, in classical causal modeling, cause and effect relations between variables are represented through directed graphs. The power of causal modeling and causal inference come from results that link properties of the observable probability distribution to underlying topology of the graph. Specifically, a central result of this type is the $d$-separation theorem \cite{Verma1990, Geiger1990}, which allows to infer conditional independencies in the probability distribution over observed variables of any \textit{acyclic} functional model from the connectivity of the underlying graph (as captured by a graph-theoretic property, $d$-separation). However, this theorem is known to fail in functional models on cyclic graphs~\cite{Pearl_2009,Neal_2000}.

In the accompanying paper~\cite{Quantum_paper}, we propose a sound and complete graph-separation property, $p$-separation, applicable to all finite dimensional quantum causal models, which embed finite-cardinality fCMs. Here, we present an alternative formulation of $p$-separation that is independent of the quantum formalism, and defined in terms of the classical causal modeling framework we have developed in this paper (\cref{sec:fCM_to_afCM}). The key difference lies in the mapping of cyclic to acyclic causal models with post-selection, which differs in the classical (here) vs the quantum framework (\cite{Quantum_paper}) as classical variables can be perfectly copied while arbitrary quantum states cannot, by virtue of the no-cloning theorem \cite{Wootters1982, Dieks1982}. However, in appendix C of \cite{Quantum_paper}, we prove the equivalence between the two definitions for $p$-separation.

We begin in \cref{sec:dsep} by reviewing the definition of $d$-separation (which is defined on acyclic and cyclic graphs) and the $d$-separation theorem in acyclic graphs.
We then demonstrate in \cref{sec:d-sep_failure} the failure of $d$-separation in cyclic scenarios.
We introduce $p$-separation in \cref{sec: psep and examples}, using the framework of~\cref{sec:fCM_to_afCM}, proving the soundness and completeness of $p$-separation and discussing its properties. 
We describe examples of applications of $p$-separation in \cref{sec:examples psep}.
Finally, we we describe the relations between $p$-separation and $\sigma$-separation in \cref{sec:sigmasep}.

\subsection{Review of $d$-separation}
\label{sec:dsep}
The concept of $d$-separation \cite{Pearl_2009, Spirtes_2005} is a graph-theoretic property defined for any directed graph by looking at the structure of paths between sets of vertices. Let us motivate $d$-separation by considering the following graphs:
\begin{align}
\label{eq: chain}
 \text{Chain: } &\centertikz{  \node[onode] (a) at (-1.5,0) {$A$};  \node[onode] (c) at (0,0) {$C$};  \node[onode] (b) at (1.5,0) {$B$}; \draw[cleg] (a) --(c); \draw[cleg] (c)--(b);}, \\
\label{eq: fork}
 \text{Fork: } &\centertikz{  \node[onode] (a) at (-1.5,0) {$A$};  \node[onode] (c) at (0,0) {$C$};  \node[onode] (b) at (1.5,0) {$B$}; \draw[cleg] (c) --(a); \draw[cleg] (c)--(b);}, \\
\label{eq: collider}
 \text{Collider: } &\centertikz{  \node[onode] (a) at (-1.5,0) {$A$};  \node[onode] (c) at (0,0) {$C$};  \node[onode] (b) at (1.5,0) {$B$}; \draw[cleg] (a) --(c); \draw[cleg] (b)--(c);},
\end{align}

For causal models on the chain and the fork, we would generally expect $A$ and $B$ to get correlated, through the (indirect) causal influence of $A$ on $B$ in the first case and through the common cause $C$ in the second case. However, we would expect that $A$ and $B$ would become independent conditioned on $C$ as $C$ mediates the correlations between $A$ and $B$ in both cases. In the chain and fork, we would say $A$ is $d$-connected to $B$, denoted $A\not\perp^d B$ while $A$ is $d$-separated from $B$ given $C$, denoted $A\perp^d B|C$. For the collider on the other hand, $A$ and $B$ have no prior causes and we would expect them to be uncorrelated and we would say $A\perp^d B$. However, conditioning on their common child $C$ amounts to post-selection and can correlate $A$ and $B$, i.e., they become $d$-connected conditioned on $C$, $A\not\perp^d B|C$. Further, if in the case of the collider, we had $C\rightarrow D$ as shown below, then we would also expect $A$ and $B$ to get correlated given $D$ (as it is in the future of both), and expect $A\not\perp^d B|D$ also for descendants $D$ of a collider $C$. 
\begin{equation}
\label{eq: collider_desc}
 \text{Collider with descendant: }  \centertikz{  \node[onode] (a) at (-1.5,0) {$A$};  \node[onode] (c) at (0,0) {$C$};  \node[onode] (b) at (1.5,0) {$B$};  \node[onode] (d) at (0,1.5) {$D$};\draw[cleg] (a) --(c); \draw[cleg] (b)--(c); \draw[cleg] (c)--(d);}
\end{equation}

Although this intuition comes from thinking of the correlations, the following definitions are purely graph-theoretic. A subsequent theorem links this definition of $d$-separation to conditional independences in a causal model, which recovers the above intuition for these simple examples.

\begin{definition}[Blocked paths]
Let $\graphname$ be a directed graph in which $V_1$, $V_2$ and $V_3$ are disjoint sets of vertices, with $V_1$ and $V_2$ being non-empty.  A path (not necessarily directed) from $V_1$ to $V_2$ is
said to be \emph{blocked} by $V_3$ if it contains, for some vertices $A$ and $B$ in the path, either $A\rightarrow W\rightarrow B$
with $W\in V_3$, $A\leftarrow W\rightarrow B$
with $W\in V_3$ or $A\rightarrow W\leftarrow B$ such that neither the vertex $W$ nor any descendant of $W$ belongs to $V_3$.
\end{definition}

\begin{definition}[$d$-separation]
\label{def: d-sep}
Let $\graphname$ be a directed graph in which $V_1$, $V_2$ and $V_3$ are disjoint
sets of vertices with $V_1$ and $V_2$ non-empty.  $V_1$ and $V_2$ are \emph{$d$-separated} by $V_3$ in
$\graphname$, denoted $(V_1\perp^d V_2|V_3)_{\graphname}$, if for every pair of vertices in $V_1$ and $V_2$ there is no path between them, or if every path from a vertex in $V_1$ to a vertex in
$V_2$ is \emph{blocked} by $V_3$. Otherwise, $V_1$ is said to be \emph{$d$-connected} with $V_2$ given $V_3$, denoted as $(V_1\not\perp^d V_2|V_3)_{\graphname}$.
\end{definition}

The concept of $d$-separation only involves graph properties and is independent on whether a functional model is defined on the graph. However, the $d$-separation theorem relates the graph property to conditional independencies of the probability distribution of a functional model on the graph. For a proof and further details on the theorems see \cite{Verma1990, Geiger1990, Pearl_2009}.

\begin{definition}[Conditional independence]
\label{def:conditional independence}
    Let $V$ be a non-empty finite set, and
    let $\mathcal{P}(x)$ be a joint probability distribution over a set $X =
    \{X_v\}_{v \in V}$ of finite-cardinality random variables, whose values are
    denoted $x = \{x_v\}_{v \in V}$.
    Let $V_1$, $V_2$ and $V_3$ be three disjoint subsets of $V$, with $V_1$ and $V_2$ being non-empty. 
    We denote the corresponding sets of random variables as $X_i = \{ X_v \}_{v\in V_i}$ and the corresponding values as $x_i = \{ x_v \}_{v\in V_i}$ for $i \in \{1,2,3\}$.
    We say that $X_1$ is conditionally independent of $X_2$ given $X_3$ and denote it as $(X_1\indep X_2|X_3)_{\mathcal{P}}$ if, for all $x_1, x_2,x_3$, it holds that $\mathcal{P} (x_1,x_2|x_3)=\mathcal{P}(x_1|x_3)\mathcal{P}(x_2|x_3)$.
\end{definition}

\begin{theorem}[$d$-separation theorem for acyclic graphs]
\label{theorem: dsep theorem}
    Consider a directed acyclic graph $\graphname$ and let $V_1$, $V_2$ and $V_3$ be any three disjoint sets of the vertices of $\graphname$ with $V_1$ and $V_2$ being non-empty. 
    Then, the following holds:
    \begin{itemize}
    \item[]\textbf{\textup{(Soundness)}} For any functional model $\fcm_{\graphname}$ on $\graphname$, we have that $d$-separation between the vertex sets $V_i$ implies conditional independence for the corresponding sets of random variables $X_i:=\{X_{\vertname}\}_{\vertname\in V_i}$ where $i\in\{1,2,3\}$, i.e.,
        \begin{equation}
         (V_1\perp^d V_2|V_3)_{\graphname} \implies (X_1\indep X_2|X_3)_{\probfacyc_{\graphname} }.
        \end{equation} 
        \item[]\textbf{\textup{(Completeness)}} If the $d$-connection $(V_1\not\perp^d V_2|V_3)_{\graphname}$ holds in $\graphname$, then there exists a functional causal model  $\fcm_{\graphname}$ such that ${(X_1\not \indep X_2|X_3)_{\probfacyc_{\graphname}}}$.
    \end{itemize}
    The above conditional (in)dependence statements are relative to the marginal $\probfacyc(x_1,x_2,x_3)_\graphname$ on $X_1\cup X_2\cup X_3$, where $x_i = \{x_\vertname \in \outcomemaparg{\vertname}\}_{\vertname\in V_i}$, of the observed distribution $\probfacyc(\outcome)_{\graphname}$, where $\outcome=\{\outcome_{\vertname}\}_{\vertname\in \vertset}$, in the functional model $\fcm_{\graphname}$. 
\end{theorem}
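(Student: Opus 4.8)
The plan is to prove the two implications separately. Soundness ($d$-separation $\Rightarrow$ conditional independence) will follow from the Markov factorization of $\probfacyc_{\graphname}$ recorded in \cref{eq: Markov1}, combined with the classical reduction of $d$-separation (\cref{def: d-sep}) to undirected separation via moralization. Completeness will be obtained by building, along a fixed $d$-connecting path, an explicit functional model whose functions are parity gates, so that conditioning on $V_3$ creates correlation between the endpoints.

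For soundness, I would first note that \cref{eq: Markov1} says $\probfacyc(x)_{\graphname}=\prod_{\vertname\in\vertset}\prob^{(\vertname)}(x_\vertname\mid x_{\parnodes{\vertname}})$, i.e.\ the distribution is Markov relative to the DAG $\graphname$. Step one: restrict to the ancestral set $A=\mathrm{An}_{\graphname}(V_1\cup V_2\cup V_3)$ by summing the factorization over $\vertset\setminus A$, peeling off childless vertices lying outside $A$ one at a time (their conditional factors sum to $1$, and such a vertex exists as long as $A$ is not everything, by ancestral-closedness); this shows the marginal $\probfacyc(x_A)_{\graphname}$ factorizes over the induced sub-DAG $\graphname[A]$. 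Step two is the purely graph-theoretic moralization lemma: $(V_1\perp^d V_2\mid V_3)_{\graphname}$ forces $V_3$ to separate $V_1$ from $V_2$ in the undirected moral graph $(\graphname[A])^m$, obtained from $\graphname[A]$ by joining every pair of co-parents and dropping orientations. Step three: each factor $\prob^{(\vertname)}(x_\vertname\mid x_{\parnodes{\vertname}})$ depends only on $\{\vertname\}\cup\parnodes{\vertname}$, which is a clique of $(\graphname[A])^m$, so $\probfacyc(x_A)_{\graphname}$ factorizes over the cliques of this undirected graph. Step four: split $A$ into $V_3$ and the two components $S_1\supseteq V_1$, $S_2\supseteq V_2$ that are mutually unreachable without crossing $V_3$; every clique lies entirely inside $S_1\cup V_3$ or inside $S_2\cup V_3$, hence $\probfacyc(x_A)_{\graphname}=g(x_{S_1},x_{V_3})\,h(x_{S_2},x_{V_3})$, and marginalizing onto $V_1\cup V_2\cup V_3$ yields a factorized form, which by \cref{def:conditional independence} is exactly $(X_1\indep X_2\mid X_3)_{\probfacyc_{\graphname}}$.

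For completeness, suppose $(V_1\not\perp^d V_2\mid V_3)_{\graphname}$ and fix a $d$-connecting path $\pi$ from some $a\in V_1$ to some $b\in V_2$ given $V_3$; choosing $\pi$ of minimal length we may assume its only vertices in $V_1\cup V_2$ are the endpoints. For each collider $c$ on $\pi$, activeness supplies a directed path $q_c$ from $c$ to some $w_c\in V_3$; let $S$ be the sub-DAG formed by $\pi$ together with all the $q_c$ (chosen as disjoint as possible and avoiding $\pi$). I would then take all variables binary and define a functional model $\fcm_{\graphname}$: every vertex of $S$ with no parent inside $S$ gets an independent fair coin, every other vertex of $S$ outputs the XOR of its $S$-parents, and every vertex outside $S$ is the constant $0$ (all unused parent-inputs ignored). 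A short induction along $\pi$ then shows that, with probability one, $X_a\oplus X_b\oplus L(X_{V_3})=0$ for a linear function $L$ of the $V_3$-variables: non-colliders propagate their bit, while each collider contributes the XOR of its two incoming bits which travels down $q_c$ to $w_c$. Conditioning on $X_{V_3}=x_3$ therefore fixes $X_a\oplus X_b$ while leaving $X_a$ uniform, so $X_a\not\indep X_b\mid X_{V_3}$; since every other variable of $V_1\cup V_2$ is constant, this is the same as $(X_1\not\indep X_2\mid X_3)_{\probfacyc_{\graphname}}$.

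The main obstacle is the bookkeeping in the completeness construction: one must ensure that the choices of the escape paths $q_c$, and their incidences with $V_3$ and with $V_1\cup V_2$, neither make $X_a$ deterministic given $X_{V_3}$ nor assign a non-constant value to some vertex of $V_1\cup V_2$ other than $a,b$ — this is precisely what minimality of $\pi$ and a careful selection of the $q_c$ are for. The cleanest way to make the claim ``$X_a\oplus X_b\oplus L(X_{V_3})=0$ with $X_a$ still uniform after conditioning'' rigorous is to work in the $\mathrm{GF}(2)$-vector space indexed by $S$ and track the linear span of the source coins, reducing the whole verification to elementary linear algebra. Soundness, by contrast, is routine once the moralization lemma is granted.
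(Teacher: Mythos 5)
The paper does not prove this theorem at all: it is quoted as the classical $d$-separation theorem for DAGs, and the reader is pointed to Verma--Pearl, Geiger--Verma--Pearl and Pearl's book for the proof. So there is no in-paper argument to compare against; what you have written is a reconstruction of the standard literature proofs. Your soundness half is correct and is exactly the classical route: the Markov factorization of \cref{eq: Markov1}, marginalization onto the ancestral set of $V_1\cup V_2\cup V_3$ by peeling childless vertices, the moralization lemma converting $d$-separation into vertex separation in the moral graph, and the clique factorization giving the product form $g\cdot h$. Two harmless bookkeeping points: the ancestral set may split into more than two components under removal of $V_3$, so components meeting neither $V_1$ nor $V_2$ must be assigned to $g$ or $h$ arbitrarily; and the final marginalization onto $V_1\cup V_2\cup V_3$ must sum each factor over its private variables separately, which your setup permits.

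The completeness half is the standard parity-gadget idea and the strategy is sound, but the step you explicitly defer --- that after attaching the descent paths $q_c$ one gets $X_a\oplus X_b\oplus L(X_{V_3})=0$ with $X_a$ still uniform given $X_{V_3}$ --- is precisely where the cited proofs spend their effort, and it is not automatic. Concretely, a descent path $q_c$ cannot always be chosen disjoint from $\pi$ and from the other $q_{c'}$ (the only directed route from a collider to $V_3$ may pass through another collider of $\pi$, or even through $a$ or $b$ when they are non-sources), and once paths overlap the $\mathrm{GF}(2)$ path-counts can cancel: $X_a+X_b$ may fail to lie in the span of the $V_3$ functionals (no dependence created), or $X_a$ itself may fall into that span (so $X_a$ is deterministic given $X_{V_3}$). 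Minimality of $\pi$ by itself does not exclude these configurations; the classical arguments either induct on a shortest active path, pass to a path with special structure, or invoke genericity of parameters. As written, your completeness argument is a correct plan with an acknowledged hole rather than a finished proof. Since the result is standard, either citing it (as the paper does) or actually carrying out the $\mathrm{GF}(2)$ linear-algebra verification for a shortest active path would be needed to close it.
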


\subsection{Failure of $d$-separation in cyclic graphs}
\label{sec:d-sep_failure}
The $d$-separation theorem (\cref{theorem: dsep theorem}) and, in particular, the soundness of $d$-separation are known to fail for functional models on cyclic graphs  \cite{Pearl_2009,Neal_2000}. For instance, consider the following cyclic graph and augmented version:
\begin{equation}
\label{eq: dsep_cycle_example}
    \graphname=\centertikz{
    \node[onode] (v1) at (0,0) {$\vertname_1$};
    \node[onode] (v2) at (2,0) {$\vertname_2$};
    \node[onode] (v3) [below left  =\chanvspace of v1] {$\vertname_3$};
    \node[onode] (v4) [below right  =\chanvspace of v2] {$\vertname_4$};
        \draw[cleg] (v1) to [out=45,in=135]  (v2);
    \draw[cleg] (v2) to [out=-135,in=-45] (v1);
 \draw[cleg] (v3) -- (v1);  \draw[cleg] (v4) -- (v2);  
    }.
\end{equation}
We have $(\vertname_3\perp^d \vertname_4)_{\graphname}$, thus, if the soundness of $d$-separation held true for cyclic models we would conclude that for any functional model on it, we must have the independence $(X_3\indep X_4)_{\probf_\graphname}$, where $X_i$ is the random variable associated to $\vertname_i$.
However, it is possible to construct a simple functional model where $\outcome_3$ and $\outcome_4$ must necessarily be correlated (and hence not independent). 

For instance, consider the example in~\cref{example: probability rule detail}. There we assumed that $\outcomemaparg{\vertname_i}=\{0,1\}$ for all $i=1,2,3,4$, i.e., all vertices are associated to binary variables, and considered the following deterministic functional dependencies $\outcome_1=\funcarg{\vertname_1}(\outcome_2,\outcome_3):=\outcome_2\oplus \outcome_3$ and $\outcome_2=\funcarg{\vertname_2}(\outcome_1,\outcome_4):=\outcome_1\oplus \outcome_4$, i.e., both $\outcome_1$ and $\outcome_2$ are given by the sum modulo 2 of their parents. In addition, consider error variables $U_3$ and $U_4$ distributed as $\probex{3}$ and $\probex{4}$ and functions $\funcarg{3}(u_3)=u_3$ and $\funcarg{4}(u_4)=u_4$ associated respectively to $\vertname_3$ and $\vertname_4$.
The probability distribution can be evaluated using~\cref{prop:connecting channels} and we obtain
\begin{align}
    \probf(\outcome_3,\outcome_4)_{\graphname}&=\mathcal{N}^{-1}\sum_{\outcome_1,\outcome_2} \probex{\vertname_3}(\outcome_3) \probex{\vertname_4}(\outcome_4) \delta_{\outcome_1,\outcome_2\oplus\outcome_3} \delta_{\outcome_2,\outcome_1\oplus\outcome_4}\\
    &=\mathcal{N}^{-1}\probex{\vertname_3}(\outcome_3) \probex{\vertname_4}(\outcome_4) \delta_{\outcome_3,\outcome_4},
\end{align}
where we defined $\mathcal{N}=\sum_{\outcome_3,\outcome_4}\probex{\vertname_3}(\outcome_3) \probex{\vertname_4}(\outcome_4) \delta_{\outcome_3,\outcome_4}$ for short (we assume here that $\probex{\vertname_3}$ and $\probex{\vertname_4}$ are such that $\mathcal N \neq 0$, in particular this would be the case if both correspond to the uniform distribution). Thus, $\outcome_3$ and $\outcome_4$ are perfectly correlated, since  $\probf(\outcome_3,\outcome_4)_{\graphname}=0$ whenever $\outcome_3\neq \outcome_4$. 

This example shows that $\outcome_3$ and $\outcome_4$ are correlated in the given fCM even though their associated vertices are $d$-separated in the graph. The reason for this lies on the fact that the loop between $\vertname_1$ and $\vertname_2$ effectively acts as a collider for $\vertname_3$ and $\vertname_4$. 
Conditioning on a collider can $d$-connect two variables which were $d$-separated before the conditioning. 
Here the collider is not explicitly conditioned upon in the original cyclic graph, however, the logical consistency of the model imposes  an effective post-selection on the values in the loop variables. In the next section, we introduce the notion of $p$-separation which generalizes $d$-separation through making the effective conditioning explicit.

In this example, $\outcome_3=u_3$ and $\outcome_4=u_4$, thus whenever $u_3\neq u_4$ also $\outcome_3\neq \outcome_4$ (as these are binary, we have $\outcome_4=\outcome_3\oplus 1$). Then, the two functional dependencies of the model reduce to $\outcome_1=\outcome_2$ and $\outcome_2=\outcome_1\oplus 1$, and there are zero solutions $(\outcome_1,\outcome_2)$ satisfying these for any values of $u_3\neq u_4$. Whenever $u_3\neq u_4$ also $\outcome_3= \outcome_4$, and both the functional dependencies reduce to $\outcome_1=\outcome_2$, which admits the two solutions $(\outcome_1=0,\outcome_2=0)$ and $(\outcome_1=1,\outcome_2=1)$ for any values of $u_3= u_4$. This model is therefore not uniquely solvable (see \cref{sec:solvability} for more precise and general definitions of solvability conditions).

\paragraph{Failure of $d$-separation in uniquely solvable models.} 
There also exist uniquely solvable models where the soundness of $d$-separation fails: \cite{Neal_2000} constructs such an example (albeit on a more complicated graph), which we review below. Consider the graph
\begin{equation}
\label{eq: dsep_Neal_example}
    \graphname^{\textup{Neal}}=\centertikz{
    \node[onode] (v3) at (0,0) {$\vertname_3$};
    \node[onode] (v2) at (2,0) {$\vertname_2$};
    \node[onode] (v1) at (1,-2) {$\vertname_1$};
    \node[onode] (v4) at (4,0) {$\vertname_4$};
    \node[onode] (v5) at (-2,0) {$\vertname_5$};
    \node[onode] (v7) at (-0.5,3) {$\vertname_7$};
    \node[onode] (v6) at (2.5,3) {$\vertname_6$};
    \draw[cleg] (v3) to [out=45,in=135]  (v2);
    \draw[cleg] (v2) to [out=-135,in=-45] (v3);
    \draw[cleg] (v7) to [out=45,in=135]  (v6);
    \draw[cleg] (v6) to [out=-135,in=-45] (v7);
    
    \draw[cleg] (v1.120) -- (v3.240);  
    \draw[cleg] (v1.60) -- (v2.300); 
    \draw[cleg] (v2.60) -- (v6.290);  
    \draw[cleg] (v2.90) to[out=120,in=270] (v7.270); 
    \draw[cleg] (v4.90) -- (v6.320);  
    \draw[cleg] (v4.140) to[out=120,in=300] (v7.300); 
    \draw[cleg] (v5) to[in=270] (v6.240);  
    \draw[cleg] (v5) -- (v7); 
    }.
\end{equation}
Consider a functional model $\fcm_{\graphname^{\textup{Neal}}}$ which involves only binary variables i.e., $\outcomemaparg{\vertname_i}=\{0,1\}$ for all $i=1,2,3,4,5,6,7$. The exogenous vertices $\vertname_1$, $\vertname_4$ and $\vertname_5$ are given corresponding errors $U_1$, $U_4$ and $U_5$ uniformly distributed and the functional dependences relating the values of each vertex to those of its parents are deterministic and we therefore need not consider their error variables (see the remark in~\cref{remark:errorrv}):
\begin{align}
    \begin{split}
        \outcome_1&=u_1,\\
        \outcome_2&=\outcome_1\oplus \outcome_3,\\
        \outcome_3&=\outcome_1\oplus \outcome_2,\\
        \outcome_4&=u_4,\\
        \outcome_5&=u_5,\\
        \outcome_6&=(\outcome_2\oplus \outcome_4\oplus \outcome_5)\cdot(\outcome_7\oplus 1),\\  
        \outcome_7&=(\outcome_2\oplus \outcome_4\oplus \outcome_5)\cdot\outcome_6.
    \end{split}
\end{align}
To see that the model is uniquely solvable, note that consistency for the functional dependences of $\outcome_6$ and $\outcome_7$ forces $\outcome_2\oplus \outcome_4\oplus \outcome_5=0$, in which case $\outcome_6=\outcome_7=0$. Moreover,  $\outcome_2\oplus \outcome_4\oplus \outcome_5=0$ implies  $\outcome_2= \outcome_4\oplus \outcome_5$ and together with the functional dependence of $\outcome_3$ above gives $\outcome_3=\outcome_1\oplus \outcome_4\oplus \outcome_5$. Therefore, given any valuation of the error variables $u_1$, $u_4$ and $u_5$, there is a unique consistent valuation for the variables assigned to all the vertices: the given values $u_1$, $u_4$ and $u_5$ fix the valuations of the exogenous 
vertices $\outcome_1=u_1$, $\outcome_4=u_4$ and $\outcome_5=u_5$, which in turn uniquely determine $\outcome_2$, $\outcome_3$, $\outcome_6$ and $\outcome_7$ as explained above. Furthermore, we can see that the $d$-separation $(\vertname_4\perp^d\vertname_5|\vertname_2)_{\graphname^{\textup{Neal}}}$ holds here, although $(X_4\not\indep X_5|X_2)_{\probf_{\graphname^{\textup{Neal}}}}$ in the constructed model as $\outcome_2\oplus \outcome_4\oplus \outcome_5=0$ implies that $\outcome_4=\outcome_5$ whenever $\outcome_2=0$. In other words, the soundness of $d$-separation fails in this uniquely solvable functional model.

\subsection{Introducing \textit{p}-separation}
\label{sec: psep and examples}
In the previous section, we have seen that $d$-separation does not correctly capture correlations in cyclic models. Its failure suggested that cycles induce correlations in a way that could be captured through post-selection. Indeed, in \cref{sec:fCM_to_afCM} we provided a mapping for all finite cyclic fCMs into acyclic fCMs with post-selection. Based on this, we define a new graph-separation property $p$-separation for cyclic graphs, where $p$ stands for post-selection.

\begin{definition}[$p$-separation]
\label{def: p-separation}
 Let $\graphname$ be a directed graph and $V_1$, $V_2$ and $V_3$ denote any three disjoint subsets of the vertices of $\graphname$ with $V_1$ and $V_2$ being non-empty. Then, we say that $V_1$ is $p$-separated from $V_2$ given $V_3$ in $\graphname$, denoted $(V_1\perp^p V_2|V_3)_{\graphname}$, if and only if there exists $\graphname\sn\in \graphfamilysn{\graphname}$ (\cref{def:sn_graph_family}) such that $(V_1\perp^d V_2|V_3\cup\psvertset)_{\graphname\sn}$, where $\perp^d$ denotes $d$-separation and $\psvertset$ denotes the set of all post-selection vertices in the teleportation graph $\graphname\sn\in \graphfamilysn{\graphname}$. 
 Otherwise, we say that $V_1$ is $p$-connected to $V_2$ given $V_3$ in $\graphname$, and we denote it $(V_1\not\perp^p V_2|V_3)_{\graphname}$.
 To summarize,
 \begin{equation}
 \begin{aligned}
     \text{$p$-separation: } (V_1\perp^pV_2|V_3)_\graphname \equiva \exists \graphname\sn \in \graphfamilysn{\graphname} \st (V_1\perp^d V_2|V_3 \cup \psvertset)_{\graphname\sn}, \\
     \text{$p$-connection: } (V_1\not\perp^pV_2|V_3)_\graphname \equiva \forall \graphname\sn \in \graphfamilysn{\graphname} \st (V_1\not\perp^d V_2|V_3 \cup \psvertset)_{\graphname\sn}.
 \end{aligned}
 \end{equation}
\end{definition}

The following theorems, whose proofs can be found in~\cref{app:pseparation}, establish soundness and completeness of $p$-separation. These results prove that $p$-separation correctly captures conditional independences in finite-cardinality cyclic functional models.

\begin{restatable}[$p$-separation theorem]{theorem}{pseptheorem}
\label{theorem: psep_theorem}
    Consider a directed graph $\graphname$ and let $V_1$, $V_2$ and $V_3$ be any three disjoint sets of the vertices of $\graphname$ with $V_1$ and $V_2$ being non-empty. 
    Then, the following holds:
    \begin{itemize}
        \item[]\textbf{\textup{(Soundness)}} For any functional model $\fcm_{\graphname}$ on $\graphname$, we have that $p$-separation between the vertex sets $V_i$ implies conditional independence for the corresponding sets of random variables $X_i:=\{X_{\vertname}\}_{\vertname\in V_i}$ where $i\in\{1,2,3\}$, i.e.,
        \begin{equation}
         (V_1\perp^p V_2|V_3)_{\graphname} \implies (X_1\indep X_2|X_3)_{\probf_{\graphname} }.
        \end{equation} 
      
        \item[]\textbf{\textup{(Completeness)}} If the $p$-connection $(V_1\not\perp^p V_2|V_3)_{\graphname}$ holds in $\graphname$, then there exists a functional causal model  $\fcm_{\graphname}$ such that ${(X_1\not \indep X_2|X_3)_{\probf_{\graphname}}}$. 
    \end{itemize}
    The above conditional (in)dependence statements are relative to the marginal $\probf(x_1,x_2,x_3)_\graphname$ on $X_1\cup X_2\cup X_3$, where $x_i = \{x_\vertname \in \outcomemaparg{\vertname}\}_{\vertname\in V_i}$ of the observed distribution $\probf(\outcome)_{\graphname}$, where $\outcome=\{\outcome_{\vertname}\}_{\vertname\in \vertset}$, in the functional model $\fcm_{\graphname}$. 
\end{restatable}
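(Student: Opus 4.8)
The plan is to leverage the mapping from cyclic fCMs to acyclic fCMs with post-selection (\cref{sec:fCM_to_afCM}) together with the classical $d$-separation theorem (\cref{theorem: dsep theorem}), which already applies on any teleportation graph $\graphname\sn$ since these are acyclic by \cref{lemma: acyclicity_cl_telegraphs}. The key dictionary is: probabilities of $\fcm_\graphname$ are, by \cref{def: probability distribution cyclic fcm}, the probabilities of $\fcm_{\graphname\sn}$ conditioned on the event $\{\postoutcome_\postvertname = 1\}_{\postvertname\in\psvertset}$; and by \cref{lemma: acyclic_prob_same_func} and \cref{corollary:probs indep of tele implementation classic}, this conditional distribution is independent of the choice of $\graphname\sn\in\graphfamilysn\graphname$ and of the teleportation protocol implementation. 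So a conditional independence in $\fcm_\graphname$ with conditioning set $V_3$ is exactly a conditional independence in the acyclic model $\fcm_{\graphname\sn}$ with enlarged conditioning set $V_3\cup\psvertset$ (after marginalizing out the preselection vertices $\prevertset$, which by \cref{lem: copy property aug} act as copies of the split vertices and so carry no extra information beyond the split vertices themselves, all of which lie in $\vertset$).

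For \textbf{soundness}, suppose $(V_1\perp^p V_2|V_3)_\graphname$. By \cref{def: p-separation} there exists a teleportation graph $\graphname\sn\in\graphfamilysn\graphname$ with $(V_1\perp^d V_2|V_3\cup\psvertset)_{\graphname\sn}$. Fix any $\fcm_\graphname$; it induces a teleportation model $\fcm_{\graphname\sn}$ (\cref{def: family functional cm}) whose distribution is the acyclic one $\probfacyc(\cdot)_{\graphname\sn}$. Applying the soundness direction of \cref{theorem: dsep theorem} to this acyclic model with the three disjoint sets $V_1$, $V_2$, $V_3\cup\psvertset$ gives $(X_1\indep X_2 \mid X_3, \{X_{\postvertname}\}_{\postvertname\in\psvertset})_{\probfacyc_{\graphname\sn}}$. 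Now restrict to the subset of outcomes where every $X_\postvertname = 1$: a conditional independence given a variable continues to hold when that variable is fixed to a particular value, so we get $(X_1\indep X_2\mid X_3)$ in the conditioned distribution $\probfacyc(\cdot \mid \{\postoutcome_\postvertname=1\})_{\graphname\sn}$, which by \cref{def: probability distribution cyclic fcm} equals $\probf(\cdot)_\graphname$. This yields $(X_1\indep X_2|X_3)_{\probf_\graphname}$. (One must check that $V_1,V_2\subseteq\vertset$ and the marginalization over $\prevertset$ is harmless; both are immediate from the construction and \cref{lem: copy property aug}.)

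For \textbf{completeness}, suppose $(V_1\not\perp^p V_2|V_3)_\graphname$, i.e.\ for \emph{every} $\graphname\sn\in\graphfamilysn\graphname$ we have $(V_1\not\perp^d V_2|V_3\cup\psvertset)_{\graphname\sn}$. Pick any single $\graphname\sn$ (say the uniform-prior one, \cref{def:uniform prior tele}); by the completeness direction of \cref{theorem: dsep theorem} there is an acyclic functional model on $\graphname\sn$ exhibiting $(X_1\not\indep X_2\mid X_3,\{X_\postvertname\}_{\postvertname\in\psvertset})$. The main obstacle — and the step I expect to be the crux — is that this is \emph{not} quite what we need: the $d$-separation completeness theorem produces a dependence when conditioning on the post-selection variables as ordinary random variables with \emph{all} their values ranging freely, whereas our probability rule only ever conditions on the specific event $\postoutcome_\postvertname = 1$, and moreover the functional model on $\graphname\sn$ must be of the restricted teleportation form (\cref{def: family functional cm}): the preselection vertices must carry the fixed teleportation prior and the post-selection vertices must compute the fixed teleportation equality-test function, so we are not free to choose arbitrary mechanisms there. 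The resolution is to \emph{not} invoke $d$-completeness as a black box, but to trace through its proof/construction: the standard construction puts a generic dependence-inducing mechanism along an active path, and one argues that an active path witnessing $(V_1\not\perp^d V_2|V_3\cup\psvertset)_{\graphname\sn}$ can be chosen to route through the gadget structure in a way compatible with the teleportation constraints — in particular, because an active path through a post-selection vertex $\postvertname_\vertname$ (a collider whose only parents are $\vertname$ and $\prevertname_\vertname$, and which is in the conditioning set) effectively identifies $\vertname$ with $\prevertname_\vertname$, one can push the needed mechanism onto the original vertices $\vertset$ and let the teleportation gadgets do only their copying job. Then one defines an $\fcm_\graphname$ realizing the dependence, maps it through \cref{def: family functional cm}, and verifies via \cref{prop:connecting channels} that $\probf(\cdot)_\graphname$ inherits $(X_1\not\indep X_2|X_3)$. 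A cleaner alternative, which I would actually pursue, is to prove completeness directly at the level of $\graphname$: from $p$-connection, extract a notion of "active $p$-path" in $\graphname$ (a path active in some $\graphname\sn$ given $V_3\cup\psvertset$), and hand-build a binary fCM that propagates a correlation along it using XOR-type mechanisms at chain/fork vertices and using the loop-as-collider phenomenon illustrated in \cref{sec:d-sep_failure} at the cyclic portions, then compute $\probf$ via \cref{prop:connecting channels} to confirm $X_1\not\indep X_2\mid X_3$. Either route reduces completeness to a careful but routine path-analysis plus an explicit model construction; the genuinely delicate point is ensuring the witnessing model respects the teleportation gadget constraints, which is why making the effective collider identification $\postvertname_\vertname:\ \vertname\simeq\prevertname_\vertname$ explicit is the linchpin of the argument.
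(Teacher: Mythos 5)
Your proposal is correct and follows essentially the same route as the paper: soundness is exactly the paper's argument (pick the witnessing $\graphname\sn$, apply the acyclic $d$-separation theorem with conditioning set $V_3\cup\psvertset$, then specialize the conditioning to the event $\{\postoutcome_\postvertname=1\}$ and invoke \cref{def: probability distribution cyclic fcm}). For completeness, you correctly identify the crux — that $d$-completeness cannot be used as a black box because the teleportation gadgets constrain the mechanisms at $\prevertset\cup\psvertset$ and the conditioning is on a fixed post-selection value — and your ``cleaner alternative'' is precisely what the paper does: an explicit binary XOR model propagating a correlation along the $d$-connecting paths, with each post-selection collider acting as an equality test between $\vertname$ and $\prevertname_\vertname$ (the paper post-selects on $t_\postvertname=0$ for the XOR gadget, which is the equality event up to relabeling, so the teleportation constraint is respected). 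The only structural difference is that the paper splits completeness into two cases — when the $d$-connection already holds in $\graphname$ itself (handled by $d$-completeness on an acyclic subgraph formed by one unblocked path, then extended to $\graphname$) versus when the connection arises only through conditioning on $\psvertset$ (handled via the three-path decomposition $v_1\not\perp^d\postvertname$, $v_2\not\perp^d\postvertname'$, $\postvertname\not\perp^d\postvertname'$ given $V_3$) — whereas your unified ``active $p$-path'' formulation would subsume both; the path-case analysis you defer as routine is indeed the bulk of the paper's remaining work, and it is executed there essentially as you anticipate.
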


The proof of~\cref{theorem: psep_theorem} is given in~\cref{app:pseparation}.

\subsection{Examples: $p$-separation in acyclic and cyclic graphs} 
\label{sec:examples psep}
\paragraph{Recovering $d$-separation in acyclic graphs.}
Notice that our definition of $p$-separation reduces to $d$-separation whenever the graph $\graphname$ is acyclic. This is because in this case the graph is a representative of its own acyclic family, $\graphname\in \graphfamilysn{\graphname}$ with an empty set of post-selection vertices $\psvertset=\emptyset$, and for $p$-separation, it is sufficient to have one graph from this family where $d$-separation conditioned on $\psvertset$ holds. Thus we immediately have that $d$-separation of $V_1$ from $V_2$ given $V_3$ implies $p$-separation of $V_1$ from $V_2$ given $V_3$ in any directed acyclic graph $\graphname$, according to \cref{def: p-separation}. The other direction, namely that $p$-separation implies $d$-separation is also true, as can be seen from the contrapositive that $d$-connection implies $p$-connection. Indeed, it is easy to see, from the definition of the graph family (\cref{def:sn_graph_family}) that if $V_1$ and $V_2$ are $d$-connected conditioned on $V_3$ in a graph $\graphname$, the same $d$-connection will hold in all teleportation graphs $\graphname\sn\in \graphfamilysn{\graphname}$ once we additionally condition on the post-selection vertices (as these can only act as conditioned colliders). Thus, $V_1$ and $V_2$ are $p$-connected conditioned on $V_3$.

We now highlight the significance in \cref{def: p-separation} of requiring that a $p$-separation relation holds if \emph{there exists} a teleportation graph $\graphname\sn\in \graphfamilysn{\graphname}$ in which a corresponding $d$-separation relation holds, as opposed to requiring that the $p$-separation relation holds if a corresponding $d$-separation relation holds \emph{for all}  teleportation graphs $\graphname\sn\in \graphfamilysn{\graphname}$. 
For this, consider the following simple directed acyclic graph $\graphname$:
\begin{equation}
\label{eq: collider_desc_eg}
\graphname=  \centertikz{  \node[onode] (a) at (-1.5,0) {$A$};  \node[onode] (c) at (0,0) {$C$};  \node[onode] (b) at (1.5,0) {$B$};  \node[onode] (d) at (0,1.5) {$D$};\draw[cleg] (a) --(c); \draw[cleg] (b)--(c); \draw[cleg] (c)--(d);}.
\end{equation}
Consider the following teleportation graph $\graphname\sn\in \graphfamilysn{\graphname}$ in its acyclic family (\cref{def:sn_graph_family}) obtained by choosing $\splitvert{\graphname\sn}=\{C\}$:
\begin{equation}
\label{eq: collider_desc_PS}
\graphname\sn= \centertikz{
  \node[onode] (A) at (1,0) {$A$};
       
        \node[onode] (C) at (3,0) {$C$};
       
         \node[onode] (B) at (5,0) {$B$};
 \node[psnode, rotate=90] (P3) at (2,1) {\phantom{$\postvertname_{33}$}};
 \node at (1.95,1) {$\postvertname$};
  \node[prenode, rotate=90] (Q3) at (4,2) {\phantom{$\prevertname_{33}$}};
   \node at (4.05,2) {$\prevertname$};
   \node[onode] (D)at (3,3) {$D$};
         
        \draw[cleg] (A) -- (C); 
        \draw[cleg] (B) -- (C);
         \draw[cleg] (C) -- (P3); \draw[cleg] (Q3) -- (P3); \draw[cleg] (Q3) -- (D);
    }.
\end{equation}
Let $\psvertset=\{\postvertname\}$ be the set of post-selection vertices in this graph. Notice that $\graphname\sn':=\graphname$, being acyclic, is also a member of its acyclic family $\graphfamilysn{\graphname}$, and we denote with $\psvertset'=\emptyset$ the (trivial) set of post-selection vertices in this case. 

It holds that $(A\not\perp^d B|\psvertset)_{\graphname\sn}$ but $(A\perp^d B|\psvertset')_{\graphname'\sn}$, since the latter is equivalent to $(A\perp^d B)_\graphname$. However $(A\not\perp^d B|\psvertset)_{\graphname\sn}$ does not imply $p$-connection as there exists an acyclic graph (in this case $\graphname\sn':=\graphname$) in the family having the relevant $d$-separation conditioned on the post-selection vertices. Thus, we have $(A\perp^p B)_\graphname$ in this example, which coincides with $d$-separation.

In other words, our definition only implies a $p$-connection in $\graphname$ when that connection is reflected in \emph{all} the graphs of the graph family $\graphfamilysn{\graphname}$. Indeed, we have shown (see \cref{lemma: acyclic_prob_same_func} and \cref{def: probability distribution cyclic fcm}) that the observed probabilities for the causal model are independent of the representative  of $\graphfamilysn{\graphname}$ chosen in computing them. It follows from these results that for any functional causal model on $\graphname\sn$ induced by a functional model on $\graphname$, the outcomes $a$ and $b$ of the vertices $A$ and $B$ will be conditionally independent even when conditioned on the collider $\psvertset:=\{\postvertname\}$ in \cref{eq: collider_desc_PS}. This is because the post-selection on the unblocking collider $\postvertname$ is fine-tuned in our definition of the induced model (it has to correspond to a post-selected teleportation protocol)\footnote{Specifically, the post-selection here serves to simulate a directed edge from $C$ to $D$ through an identity channel (without post-selecting a particular outcome value on $C$ or $D$), which is why it does not correlate the outcomes of $A$ and $B$.}. Thus, our definition of $p$-separation serves to avoid such fine-tuning and enables us to construct a sound and complete graph separation property for general directed graphs that reduces to $d$-separation in the acyclic case.

\paragraph{$p$-separation in action.} Going back to the example in \cref{sec:d-sep_failure}, we can see that although $d$-separation failed to detect the possibility of correlation between certain vertices, $p$-separation does. In particular, for the cyclic graph in \cref{eq: dsep_cycle_example}, consider the following member of $\graphfamilysn{\graphname}$ obtained by choosing $\splitvert{\graphname\sn}:=\{\vertname_1\}$.

\begin{equation}
 \graphname\sn= \centertikz{
    \node[onode] (v1) at (0,0) {$\vertname_1$};
 \node[psnode] (p1) [above right =2*\chanvspace of v1] {$\postvertname_1$};
  
    \node[onode] (v2) at (5,0) {$\vertname_2$};

     \node[prenode] (q1) [below left =2*\chanvspace of v2] {$\prevertname_1$};

     \node[onode] (v3) [below left  =\chanvspace of v1] {$\vertname_3$};
   \node[onode] (v4) [below right  =\chanvspace of v2] {$\vertname_4$};
        \draw[cleg] (v1) -- (p1);  \draw[cleg] (q1) -- (p1);  \draw[cleg] (q1) -- (v2);  
               \draw[cleg] (v2) -- (v1);

 \draw[cleg] (v3) -- (v1);  \draw[cleg] (v4) -- (v2);  
    }
\end{equation}
We can see that $(\vertname_3\not\perp^d\vertname_4|\psvertset)_{\graphname\sn}$ as $\psvertset:=\{\postvertname_1\}$ acts as an unblocking collider. It is easy to see that every member of $\graphfamilysn{\graphname}$ will have this $d$-connection relative to its $\psvertset$, as each such member will necessarily include at least one of the loop vertices $\vertname_1$, $\vertname_2$ in $\splitvert{\graphname\sn}$ and hence always introduce at least one unblocking collider between $\vertname_3$ and $\vertname_4$. Therefore, by \cref{def: p-separation}, we have that $(\vertname_3\not\perp^p\vertname_4)_\graphname$. This $p$-connection explains the correlation between the outcomes of $\vertname_3$ and $\vertname_4$ that was observed in the functional model constructed in \cref{sec:d-sep_failure}. 

Moreover, note that the same cyclic graph $\graphname$ also has a non-trivial $p$-separation. In particular notice that $(\vertname_3\perp^d \vertname_4|\vertname_1,\vertname_2)_\graphname$. It is straightforward to see by applying \cref{def: p-separation} to the graph $\graphname\sn\in \graphfamilysn{\graphname}$ that we also have $(\vertname_3\perp^p \vertname_4|\vertname_1,\vertname_2)_\graphname$, since conditioning additionally on $\vertname_1$ and $\vertname_2$ blocks all paths between the remaining vertices and the post-selection vertex $\postvertname_1$. The soundness of $p$-separation then implies that the conditional independence $(X_3\indep X_4|X_1, X_2)_{\probf_\graphname}$ will hold for all valid functional causal models one can define on $\graphname$ within our framework.

Finally, applying similar arguments to the cyclic graph $\graphname^{\textup{Neal}}$ (\cref{eq: dsep_Neal_example}) underlying the example of \cite{Neal_2000}, where we had the $d$-separation $(\vertname_4\perp^d\vertname_5|\vertname_2)_{\graphname^{\textup{Neal}}}$, we can see that we would have the $p$-connection $(\vertname_4\not\perp^p\vertname_5|\vertname_2)_{\graphname^{\textup{Neal}}}$. This is because, just as in the simpler example above, any member of $\graphfamilysn{\graphname^{\textup{Neal}}}$ will necessarily involve at least one post-selection vertex within the loop between $\vertname_6$ and $\vertname_7$ and conditioning on this, the exogenous vertices $\vertname_4$ and $\vertname_5$ influencing the loop, will become $d$-connected (and stay that way also when conditioning on $\vertname_2$), i.e., $(\vertname_4\not\perp^d\vertname_5|\{\vertname_2\}\cup\psvertset)_{\graphname^{\textup{Neal}}\sn}$ will hold for every $\graphname^{\textup{Neal}}\sn\in \graphfamilysn{\graphname^{\textup{Neal}}}$. This implies the $p$-connection $(\vertname_4\not\perp^p\vertname_5|\vertname_2)_{\graphname^{\textup{Neal}}}$, which can explain the correlations seen between the outcomes $\outcome_4$ and $\outcome_5$ conditioned on $\outcome_2$ in the uniquely solvable functional model $\fcm_{\graphname^{\textup{Neal}}}$ of that example (reviewed in \cref{sec:d-sep_failure}).

\subsection{Links to $\sigma$-separation}
\label{sec:sigmasep}

In light of the failure of the $d$-separation theorem in cyclic causal models, another generalization of $d$-separation for cyclic graphs, known as $\sigma$-separation, was developed in \cite{Forre_2017}. This property,
$\sigma$-separation was proven to be sound and complete for a class of functional causal models (fCMs) known as modular fCMs or mfCMs~\cite{Forre_2017}\footnote{These are referred to as modular structural equation models, (mSEM), in \cite{Forre_2017}}. 
Modular fCMs correspond to fCMs defined generally on discrete and continuous variables, where the functional dependences possess a stronger version of the unique solvability property. 
Although mfCMs are a well-behaved and important subclass of classical causal models with many desirable properties, there do exist well-defined and consistent functional models outside this class for which the $\sigma$-separation theorem (i.e., the soundness and completeness of the property) no longer applies, and cannot account for the correlations there.
In contrast, the $p$-separation theorem proven here applies to all consistent functional causal models, but with the restriction to finite-cardinality variables. More generally, as shown in the companion paper \cite{Quantum_paper}, the theorem also extends to all consistent cyclic quantum causal models involving finite-dimensional Hilbert spaces. Thus the domains of validity of the $\sigma$- and $p$-separation theorems are quite distinct, neither being a subset of the other (non-uniquely solvable and quantum models are excluded in the former while functional models involving infinite-cardinality discrete variables as well as continuous variables are excluded in the latter). 

Despite the different approaches backing $\sigma$ and $p$-separation, some interesting points of comparison can be made, which suggest new research directions.
We do not discuss the definition of $\sigma$-separation here (for that we refer to \cite{Forre_2017}), but we consider a particular example often used in the previous literature. Consider the following cyclic graph:
\begin{equation}
\label{eq: example_sigmasep}
    \graphname = \centertikz{
    \node[onode] (v1) {$\vertname_1$};
    \node[onode] (v2) [above right  =1.5*\chanvspace of v1] {$\vertname_2$};
    \node[onode] (v3) [below right  =1.5*\chanvspace of v2] {$\vertname_3$};
    \node[onode] (v4) [below left  =1.5*\chanvspace of v3] {$\vertname_4$};
   \draw[cleg] (v1) -- (v2); \draw[cleg] (v2) -- (v3); \draw[cleg] (v3) -- (v4); \draw[cleg] (v4) -- (v1);
 }
\end{equation}
Applying the definition of $d$-separation, it is easy to confirm that $(\vertname_1\perp^d \vertname_3|\vertname_2,\vertname_4)_\graphname$ and $(\vertname_2\perp^d \vertname_4|\vertname_1,\vertname_3)_\graphname$ hold in this graph. Nevertheless \cite{Forre_2017, forre_2018} shows that there exist functional causal models on this graph, where the variables associated with the vertices are continuous (take real values, $\mathbb{R}$) which can generate correlations between the variables associated with $\vertname_1$ and $\vertname_3$ even when conditioned on the variables of $\vertname_2$ and $\vertname_4$. On the other hand, relative to $\sigma$-separation, denoted as $\perp^\sigma$, $(\vertname_1\not\perp^\sigma \vertname_3|\vertname_2,\vertname_4)_\graphname$ and $(\vertname_2\not\perp^\sigma \vertname_4|\vertname_1,\vertname_3)_\graphname$ which can explain the correlations in this case (where $d$-separation fails to do so). In other words, observing such correlations in this graph would certify a correlation gap between $d$ and $\sigma$ separation. 

Interestingly, $p$-separation in this case agrees with $d$-separation rather than $\sigma$-separation, and we can check that we have the $p$-separations $(\vertname_1\perp^p \vertname_3|\vertname_2,\vertname_4)_\graphname$ and $(\vertname_2\perp^p \vertname_4|\vertname_1,\vertname_3)_\graphname$. This is because, even when introducing post-selection vertices for any number of edges and conditioning on them, the conditioning on the vertices $\vertname_2$ and $\vertname_4$  or $\vertname_1$ and $\vertname_3$ blocks all the paths between the remaining vertices under consideration. As a consequence of \cref{theorem: psep_theorem} about the soundness of $p$-separation, it follows that for every causal model on the above graph where the vertices have non-trivial outcome sets, we will have the conditional independencies $(X_1 \indep X_3|X_2, X_4)_{\probf_{\graphname}}$ and $(X_2 \indep X_4|X_1, X_3)_{\probf_{\graphname}}$. In other words, this implies that a gap between $d$ and $\sigma$ separation cannot be certified in the above graph using correlations arising in any classical causal model that can be described within our framework. Using the results of~\cite{Quantum_paper}, even allowing for quantum causal models on finite dimensional Hilbert spaces would not allow to certify this gap, as the soundness of $p$-separation extends to the quantum case.

A crucial point to note here is that our framework assumes only finite-cardinality random variables. The previously found distinguishing example between $d$- and $\sigma$-separation involves continuous variables, which are not covered by our formalism. 
This observation also highlights that the intuition given by $p$-separation may not immediately generalize to continuous models. Indeed, our method for computing probabilities in cyclic causal models relies on post-selecting on the precise values of certain variables. In the case of continuous variables, this post-selection event is of measure zero, and the associated probability of success would tend to zero\footnote{In the quantum case, this discussion is related to the difficulty in defining an analogue of a maximally entangled state between infinite-dimensional Hilbert spaces while ensuring a bounded norm for that state.}. In \cref{sec: discussion}, we further discuss possible directions for future research based on such comparisons between graph-separation properties and associated correlations.

\section{Solvability and related properties of cyclic functional models} 
\label{sec:solvability}
An interesting question arising in the literature is whether the functional dependencies of a given causal models can be jointly satisfied for a choice of value assignment over error vertices. Explicitly, given a valuation $\{u_{\vertname}\in\outcomemaparg{\vertname}\}_{\vertname\in\vertset}$ over the error variables, one considers whether there exists a set $\{\outcome_{\vertname}\in\outcomemaparg{\vertname}\}_{\vertname\in\vertset}$ of values on the vertices such that 
\begin{equation}
    \outcome_{\vertname} = \funcarg{\vertname}\Big(\outcome_{\parnodes{\vertname}},u_\vertname\Big)   \text{ for all } \vertname\in\vertset
\end{equation} 
holds (then the values entail a  \textit{consistent solution}), and, eventually, whether this set is unique (this entails a \emph{unique solution}). 
For functional models on arbitrary cyclic graphs, the existence of a consistent solution is not guaranteed. However, if we restrict to functional models on \textit{acyclic} graphs, for all valuations of the error variables, a consistent solution that satisfies all functional dependencies always exists and is unique \cite{Pearl_2009}.
In this section, we aim to characterize the existence and number of solutions of a given finite functional causal model, and discuss how this relates to other properties of the model, such as Markovianity.
\subsection{Number of solutions and consistency}
We begin by formally defining the number of solutions of a functional causal model over finite-cardinality variables on an arbitrary graph. Then, we review the notion of unique solvability and introduce a new notion of average unique solvability, proving links to other properties. The proofs relative to results of this section are given in~\cref{app:proofs solv}.

\begin{definition}[Number of solutions]
Consider a functional model on a directed graph $\graphname=\graphexpl$, $\fcm_{\graphname}$, and a set of value assignments on error variables, $u=\{u_{\vertname}\in\errormaparg{\vertname}\}_{\vertname\in\vertset}$. The number of solutions of the model, for the given value assignment on exogenous vertices, is defined as
\begin{equation}
    \numsol{u} = \sum_{\outcome} \prod_{\vertname\in\vertset} \delta_{\funcarg{\vertname}(\outcome_{\parnodes{\vertname}},u_\vertname), \outcome_\vertname},
\end{equation}
where we used the same notation of~\cref{def:functional_CM} and the sum is over the value assignments, $\outcome =\{\outcome_{\vertname}\in\outcomemaparg{\vertname}\}_{\vertname\in\vertset}$. 
\end{definition}

Notice that the number of solutions of a model is defined independently of the prior distributions $\probex{\vertname}$ associated to error variables $u_{\vertname}$ for $\vertname\in\vertset$ in that model. Thus, we introduce the notion of average number of solutions of a functional model which takes into account these distributions.

\begin{definition}[Average number of solutions]
\label{def:average number of sol}
Consider a functional model on a directed graph $\graphname=\graphexpl$, $\fcm_{\graphname}$, the average number of solutions of the model is defined as

\begin{equation}
    \avnumsol = \sum_{u} 
    \numsol{u}
    \prod_{\vertname\in\vertset}\probex{\vertname}(u_{\vertname}),
\end{equation}
where the sum runs over $u=\{u_{\vertname}\in\errormaparg{\vertname}\}_{\vertname\in\vertset}$.
\end{definition}

The following proposition connects the probability of successful post-selection (\cref{def: c_success_probability}) of a teleportation causal model in $\graphfamilysn{\graphname}$ to the average number of solutions of the underlying causal model $\fcm_{\graphname}$.

\begin{restatable}[Average number of solutions and success probability]{prop}{austopsucc}
\label{prop:ans to psuccess}
    Consider a functional causal model over a directed graph $\graphname=\graphexpl$, $\fcm_{\graphname}$. Let $\graphname\sn\in\graphfamilysn{\graphname}$ be a teleportation graph and $\fcm_{\graphname\sn}$ the corresponding teleportation causal model constructed from $\fcm_{\graphname}$. The post-selection success probability $\csuccessprob$ (\cref{def: c_success_probability}) of $\fcm_{\graphname\sn}$ satisfies
    \begin{equation}
        \csuccessprob =
    \left(\prod_{\vertname\in\splitvert{\graphname\sn}} \teleprob^{(\vertname)}\right) \avnumsol,
\end{equation}
where $\splitvert{\graphname\sn}$ is the set of split vertices of $\graphname\sn$ and $\teleprob^{(\vertname)}$ is the teleportation probability associated to the classical post-selected teleportation protocol implemented for the split vertex $\vertname\in\splitvert{\graphname\sn}$.
\end{restatable}

The previous proposition allows us to characterize inconsistent functional models, i.e., models where $\csuccessprob =0$, in terms of the average number of solutions. This follows immediately from~\cref{prop:ans to psuccess} and from the fact that $\teleprob >0$ by definition of teleportation protocol. 

\begin{restatable}[Characterization of inconsistent models]{corollary}{austoinconsistent}
    A functional causal model on a directed graph $\graphname$, $\fcm_{\graphname}$, is inconsistent (\cref{def: probability distribution cyclic fcm}) if and only if $\avnumsol = 0$.
\end{restatable}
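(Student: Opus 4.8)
The plan is to derive this corollary directly from \cref{prop:ans to psuccess}, which already establishes the factorization $\csuccessprob = \left(\prod_{\vertname\in\splitvert{\graphname\sn}} \teleprob^{(\vertname)}\right) \avnumsol$ for any choice of teleportation graph $\graphname\sn\in\graphfamilysn{\graphname}$ and any choice of classical post-selected teleportation protocols on its split vertices. First I would recall that by \cref{def: probability distribution cyclic fcm}, a functional model $\fcm_\graphname$ is declared inconsistent precisely when $\csuccessprob = 0$, and that by \cref{corollary:probs indep of tele implementation classic} together with \cref{lemma: acyclic_prob_same_func}, the value of $\csuccessprob$ — and in particular whether it vanishes — does not depend on which $\graphname\sn$ or which implementations one picks. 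So it suffices to fix one arbitrary teleportation graph $\graphname\sn$ and one choice of protocols, for instance the uniform prior protocol of \cref{def:uniform prior tele}, and reason about that single expression.

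The key step is the observation that each factor $\teleprob^{(\vertname)}$ is strictly positive: by \cref{def:classical ps tele}, the success probability of any classical post-selected teleportation protocol lies in $(0,1]$, hence $\teleprob^{(\vertname)} > 0$ for every $\vertname\in\splitvert{\graphname\sn}$. Since $\splitvert{\graphname\sn}$ is finite, the product $\prod_{\vertname\in\splitvert{\graphname\sn}} \teleprob^{(\vertname)}$ is a finite product of strictly positive numbers and is therefore strictly positive. Consequently, in the identity $\csuccessprob = \left(\prod_{\vertname\in\splitvert{\graphname\sn}} \teleprob^{(\vertname)}\right) \avnumsol$, the prefactor is a nonzero constant, so $\csuccessprob = 0$ holds if and only if $\avnumsol = 0$. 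Combining this with the definition of inconsistency yields the claimed equivalence: $\fcm_\graphname$ is inconsistent if and only if $\avnumsol = 0$.

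There is no real obstacle here — the content is entirely carried by \cref{prop:ans to psuccess}, and the corollary is a one-line consequence of the positivity of teleportation success probabilities. The only point requiring a modicum of care is to make explicit that the statement "$\fcm_\graphname$ is inconsistent" is well-defined independently of the auxiliary choices entering the construction of $\fcm_{\graphname\sn}$; this is already guaranteed by the earlier results (\cref{lemma: acyclic_prob_same_func} and \cref{corollary:probs indep of tele implementation classic}), so it need only be invoked, not re-proven. I would also note in passing that $\numsol{u}$ is a non-negative integer for every $u$, so $\avnumsol$ is a non-negative real, and $\avnumsol = 0$ is equivalent to $\numsol{u} = 0$ for every $u$ in the support of $\prod_{\vertname\in\vertset}\probex{\vertname}$ — i.e., the model has no solution for any sufficiently likely error valuation — which makes the terminology "inconsistent" transparent, though this remark is not needed for the proof itself.
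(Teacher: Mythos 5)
Your proposal is correct and follows essentially the same route as the paper: the corollary is obtained immediately from \cref{prop:ans to psuccess} together with the fact that each teleportation success probability $\teleprob^{(\vertname)}$ is strictly positive by \cref{def:classical ps tele}, so the prefactor multiplying $\avnumsol$ cannot vanish. Your additional remarks on the well-definedness of inconsistency across choices of $\graphname\sn$ and protocol implementations are consistent with the paper's earlier results and do not change the argument.
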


An interesting subset of functional models is the set of so-called \textit{uniquely solvable} functional models (see also~\cite{Forre_2017, Bongers_2021}.
\begin{definition}[Uniquely solvable functional models]
A functional model $\fcm_\graphname$ on a directed graph $\graphname=\graphexpl$ is uniquely solvable if for all value assignments on error variables, $u=\{u_{\vertname}\in\errormaparg{\vertname}\}_{\vertname\in\vertset}$, it holds that
\begin{equation}
    \numsol{u} =1.
\end{equation}
\end{definition}
In analogy to uniquely solvable models, we can define averagely uniquely solvable models as follows.
\begin{definition}[Averagely uniquely solvable models]
\label{def:average uniquely solvable}
A functional model $\fcm_\graphname$ on a directed graph $\graphname=\graphexpl$ as in~\cref{def:functional_CM} is averagely uniquely solvable if it holds
\begin{equation}
    \avnumsol =1.
\end{equation}
\end{definition}

Clearly, every uniquely solvable model is also averagely uniquely solvable. This is proven in the following proposition.

\begin{restatable}[Unique solvability implies average unique solvability]{prop}{austous}
    A uniquely solvable functional model is also averagely uniquely solvable.
\end{restatable}

The converse, however, is not true. The examples of the next section show that there are functional models that are only averagely uniquely solvable.

\subsection{Examples on solvability}
In this section, we first consider acyclic models, then discuss two examples of cyclic functional models and their solvability.

\paragraph{Acyclic functional models.}
It is easy to see that acyclic models are uniquely solvable, indeed, given~\cref{def: distribution_functional_cm} we have
\begin{equation}
    \numsol{u} = \sum_{\outcome}\probfacyc(\outcome|u) = 1,
\end{equation}
where $u=\{u_{\vertname}\in\errormaparg{\vertname}\}_{\vertname\in\vertset}$ and the sum runs over $\outcome=\{\outcome_{\vertname}\in\outcomemaparg{\vertname}\}_{\vertname\in\vertset}$. 

\paragraph{Uniquely solvable cyclic functional model.}

Consider the graph 
\begin{equation}
\label{eq:example_us_cycle}
    \graphname_{\textup{us}}=\centertikz{
        \node[onode] (A) at (-1, 0) {$A$};
        \node[onode] (B) at (1, 0) {$B$};
        \draw[cleg] (A.north) to[in = 120, out = 60] (B.north);
        \draw[cleg] (B.240) to[in = 300, out = 240] (A.300);
    },
\end{equation}
associate the sets $\outcomemaparg{A}=\outcomemaparg{B}=\{0,1,2\}$ to each vertex as well as the deterministic functions 

\begin{equation}
\begin{aligned}
    a&=\func^A(b):=2b \mod{3}, \\
    b&=\func^B(a):=a.
\end{aligned}
\end{equation}
Since the functions are deterministic, the error variables are irrelevant, and we obtain
\begin{equation}
    \textup{N}_{\fcm}=\sum_{a,b\in\{0,1,2\}} \delta_{a,\funcarg{A}(b)}\delta_{b,\funcarg{B}(a)} = \delta_{0,0}\delta_{0,0} =1,
\end{equation}
since the only consistent solution is $a=b=0$.

\paragraph{Averagely uniquely solvable cyclic functional model.}
An averagely uniquely solvable model is not necessarily uniquely solvable. 
For instance, consider the following graph:
\begin{equation}
    \graphname = \centertikz{
        \node[onode] (x1) at (0,0) {$X_1$};
        \node[onode] (x2) at (2,0) {$X_2$};
        \draw[cleg] (x1) to [out=45,in=135]  (x2);
        \draw[cleg] (x2) to [out=-135,in=-45] (x1);
          
    }.
\end{equation}
Each random variable is taken to be binary, so that $ \outcomemaparg{X_1}=\outcomemaparg{X_2}=\{0,1\}$.
We choose a uniform probability distribution associated to the binary error variable of $X_2$, $U_2$, $\probex{2}(0)=\probex{2}(1)=1/2$, and for all $i=1,2$ we let the function $\funcarg{i}$ associated to the vertex $X_i$ be,
for all $u_2,x_i \in \{0,1\}$.,
\begin{equation}
\begin{aligned}
    x_1&=\funcarg{1}(x_2) := x_2, \\
    x_2&=\funcarg{2}(u_2,x_1) := x_1 \oplus u_2.
\end{aligned}
\end{equation}
We have, for $u_2=0$,
\begin{equation}
    \left.
    \begin{aligned}
       x_1&=\funcarg{1}(x_2) = x_2 \\
       x_2&=\funcarg{2}(0,x_1) = x_1
    \end{aligned}
    \right\}
   \implies x_1 = x_2.
\end{equation}
Hence, $\numsol{u_2=0} = 2$. 
For $u_2=1$,
\begin{equation}
    \left.
    \begin{aligned}
       x_1&=\funcarg{1}(x_2) = x_2\\ 
       x_2&=\funcarg{2}(1,x_1) = x_1\oplus 1
    \end{aligned}
    \right\}
    \implies x_1 = x_2 \neq x_1.
\end{equation}
Hence, $\numsol{u_2=1} = 0$.
The model is not uniquely solvable,
but it is averagely uniquely solvable, since using the given error distribution we obtain
\begin{equation}
    \avnumsol = \frac{1}{2}\numsol{u_2=0}+\frac{1}{2}\numsol{u_2=1} = 1.
\end{equation}

\subsection{Linking Markov factorization and average unique solvability}
\label{sec:linksmarkovianity}
An important property of probability distributions arising in causal models is the Markov property, which captures that the distribution factorizes according to the underlying directed graph. This is defined as follows.

\begin{definition}[Markov property in functional causal models]
\label{def:Markov}
    Consider a functional causal model $\fcm_\graphname$ over a directed graph $\graphname=\graphexpl$. If the joint distribution over observed events $\outcome=\{\outcome_{\vertname}\in\outcomemaparg{\vertname}\}_{\vertname\in\vertset}$ factorizes as in \cref{eq: Markov1}  i.e.,
    \begin{equation}
        \probf(\outcome)_{\graphname} = \prod_{\vertname\in\vertset} \prob^{(\vertname)}\left(\outcome_\vertname|\parnodes{\outcome_\vertname}\right)_{\graphname},
    \end{equation}
    where $\prob^{(\vertname)}$ is the conditional probability distribution specified by $\fcm_\graphname$ for all $\vertname\in\vertset$,
    then we say that the model satisfies the Markov property or is probabilistically Markovian.
\end{definition}
With this definition, it is immediate to see that all acyclic functional causal models satisfy this Markov property by \cref{def: distribution_functional_cm} of their probabilities. Moreover as the probability rule for uniquely solvable models is identical to the acyclic case,\footnote{
    One can see this explicitly in the upcoming \cref{eq:avg unique prob}.
} they also respect this property~\cite{Forre_2017,Bongers_2021}.
However, generic functional models on cyclic graphs are not necessarily Markovian, as there exist examples of non-uniquely solvable models  \cite{VilasiniColbeckPRA} whose distribution does not factorize as in~\cref{def:Markov}. Using our framework, we are able to characterize the set of functional models which satisfy the Markov property. The following propositions shows that in the finite cardinality case, Markovianity is satisfied in a strictly larger set of models than the uniquely solvable ones, namely, it is satisfied for all average unique solvable models.
Furthermore, this is the largest set of models satisfying Markovianity.

\begin{corollary}
Given a functional causal model on a directed graph $\graphname$, $\fcm_{\graphname}$, the following statements are equivalent:
\begin{myitem}
    \item $\fcm_{\graphname}$ is probabilistically Markovian;
    \item $\fcm_{\graphname}$ is averagely uniquely solvable, $\avnumsol=1$;
    \item for every choice of $\graphname\sn\in \graphfamilysn\graphname$ used in the definition of $\csuccessprob$ (\cref{def: c_success_probability}), the post-selection success probability is 
\begin{equation}
     \csuccessprob = \left(\prod_{\vertname\in\splitvert{\graphname\sn}} \teleprob^{(\vertname)}\right),
\end{equation}
where $\splitvert{\graphname\sn}$ is the set of split vertices of $\graphname\sn$ and $\teleprob^{(\vertname)}$ is the teleportation probability associated to the classical post-selected teleportation protocol implemented for the split vertex $\vertname\in\splitvert{\graphname\sn}$.
\end{myitem} 
\end{corollary}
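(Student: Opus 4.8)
The plan is to treat the equivalence \emph{(2)}~$\Leftrightarrow$~\emph{(3)}, which is essentially immediate from \cref{prop:ans to psuccess}, separately from \emph{(1)}~$\Leftrightarrow$~\emph{(2)}, which will follow once $\probf(\outcome)_{\graphname}$ is written in closed form. For \emph{(2)}~$\Leftrightarrow$~\emph{(3)}: by \cref{prop:ans to psuccess}, for any teleportation graph $\graphname\sn\in\graphfamilysn{\graphname}$ one has $\csuccessprob=\left(\prod_{\vertname\in\splitvert{\graphname\sn}}\teleprob^{(\vertname)}\right)\avnumsol$, and since every teleportation probability $\teleprob^{(\vertname)}$ is strictly positive, dividing through shows that the identity $\csuccessprob=\prod_{\vertname\in\splitvert{\graphname\sn}}\teleprob^{(\vertname)}$ holds for one (equivalently, by the same proposition, for every) choice of $\graphname\sn$ if and only if $\avnumsol=1$.

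Next I would establish a closed form for the probability rule. Starting from the expression in \cref{prop:connecting channels}, the numerator $\sum_{u}\prod_{\vertname\in\vertset}\probex{\vertname}(u_\vertname)\,\delta_{\outcome_{\vertname},\funcarg{\vertname}(\outcome_{\parnodes{\vertname}},u_{\vertname})}$ factorizes over $\vertname$, since each error value $u_\vertname$ occurs in exactly one factor; carrying out the sum factor by factor yields $\prod_{\vertname\in\vertset}\prob^{(\vertname)}\left(\outcome_\vertname|\parnodes{\outcome_\vertname}\right)_{\graphname}$, the product of the conditional distributions $\prob^{(\vertname)}$ appearing in \cref{eq: Markov1}. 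The denominator is the same expression summed over $y=\{y_\vertname\in\outcomemaparg{\vertname}\}_{\vertname\in\vertset}$, and interchanging the (finite) sums over $y$ and $u$ and comparing with \cref{def:average number of sol} identifies it with $\avnumsol$. Hence, for every model that is not inconsistent,
\begin{equation}
    \probf(\outcome)_{\graphname} = \frac{1}{\avnumsol}\prod_{\vertname\in\vertset}\prob^{(\vertname)}\left(\outcome_\vertname|\parnodes{\outcome_\vertname}\right)_{\graphname}.
\end{equation}

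With this identity in hand, \emph{(2)}~$\Rightarrow$~\emph{(1)} is immediate: if $\avnumsol=1$ the displayed formula is exactly the Markov factorization of \cref{def:Markov}. For \emph{(1)}~$\Rightarrow$~\emph{(2)}, I would assume the model is probabilistically Markovian, i.e.\ $\probf(\outcome)_{\graphname}=\prod_{\vertname\in\vertset}\prob^{(\vertname)}(\outcome_\vertname|\parnodes{\outcome_\vertname})_{\graphname}$ for all $\outcome$, and sum both sides over all joint events $\outcome=\{\outcome_\vertname\in\outcomemaparg{\vertname}\}_{\vertname\in\vertset}$: the left-hand side sums to $1$ because $\probf(\cdot)_{\graphname}$ is a probability distribution, while the right-hand side sums to $\avnumsol$ by the computation of the preceding paragraph, forcing $\avnumsol=1$. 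Finally I would note that the degenerate case $\avnumsol=0$ is consistent with the stated equivalence: there the model is inconsistent, so $\probf$ is undefined and \emph{(1)} does not hold, while $\csuccessprob=0\neq\prod_{\vertname\in\splitvert{\graphname\sn}}\teleprob^{(\vertname)}$ so \emph{(3)} fails as well.

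I do not anticipate a genuine obstacle, since the substantive content already resides in \cref{prop:connecting channels} and \cref{prop:ans to psuccess}. The only points requiring a little care are the bookkeeping that recognizes the denominator of \cref{prop:connecting channels} as $\avnumsol$ (a single interchange of finite sums), the observation that the numerator's factorization is precisely the product of the conditional distributions $\prob^{(\vertname)}$, and phrasing the corollary so that the inconsistent case, in which ``probabilistically Markovian'' is not even well-posed, is covered.
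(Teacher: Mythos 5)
Your proposal is correct and follows essentially the same route as the paper: both rest on \cref{prop:connecting channels} to write $\probf(\outcome)_\graphname$ as the Markov product divided by $\avnumsol$ (identifying the denominator with the average number of solutions) and on \cref{prop:ans to psuccess} together with $\teleprob^{(\vertname)}>0$ for the equivalence with the success-probability condition. You merely spell out a few steps the paper leaves implicit, notably the normalization argument for \emph{(1)}~$\Rightarrow$~\emph{(2)} and the treatment of the inconsistent case.
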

The above corollary follows directly from~\cref{prop:ans to psuccess,prop:connecting channels}. Indeed, it holds that
\begin{align}
\label{eq:avg unique prob}
    \probf(\outcome) 
    &= \frac{
    \sum_{u}\prod_{\vertname\in\vertset}\probex{\vertname}(u_\vertname) \delta_{\outcome_{\vertname}, \funcarg{\vertname}\left(\outcome_{\parnodes{\vertname}},u_{\vertname}\right)}
    }{
    \sum_{y}\sum_{u}\prod_{\vertname\in\vertset}\probex{\vertname}(u_\vertname) \delta_{y_{\vertname}, \funcarg{\vertname}\left(y_{\parnodes{\vertname}},u_{\vertname}\right)}
    } \nonumber\\
    &= \frac{  \sum_{u}\prod_{\vertname\in\vertset}\probex{\vertname}(u_\vertname) \delta_{\outcome_{\vertname}, \funcarg{\vertname}\left(\outcome_{\parnodes{\vertname}},u_{\vertname}\right)}}{\avnumsol},
\end{align}
where the sum runs over $u=\{u_\vertname\in\errormaparg{\vertname}\}_{\vertname\in\vertset}$.
The numerator of the above equality yields the Markov property if and only if the constant denominator, $\avnumsol$, is equal to $1$, i.e., if the model is averagely uniquely solvable.

\paragraph{Links between $d$-separation and Markovianity.} In acyclic functional causal models, the probability distribution is given by~\cref{def: distribution_functional_cm}, which trivially satisfies the Markov property of \cref{def:Markov}. For these models we also know that $d$-separation is sound and complete (\cref{theorem: dsep theorem}). This is not a coincidence, as it is known that in directed acyclic graphs $d$-separation and Markovianity are equivalent to each other, and to another property (called the local Markov property) which states that the variable at each vertex is conditionally independent of its non-descendants given the parents~\cite{Lauritzen_1990,Geiger_1990}. However, in cyclic models this is no longer the case~\cite{Verma_1990}. Firstly, while we have shown that Markovianity in the sense of \cref{def:Markov} holds for all averagely uniquely solvable models, $d$-separation soundness fails in this set as shown by the example of the uniquely solvable model given in  \cite{Neal_2000} (reviewed in \cref{sec:d-sep_failure}).

Furthermore, consider the graph
\begin{equation}
    \graphname=\centertikz{
        \node[onode] (v1) at (0,0) {$\vertname_1$};
        \node[onode] (v2) at (2,0) {$\vertname_2$};
        \node[onode] (v3) [below left  =\chanvspace of v1] {$\vertname_3$};
        \node[onode] (v4) [below right  =\chanvspace of v2] {$\vertname_4$};
        \draw[cleg] (v1) to [out=45,in=135]  (v2);
        \draw[cleg] (v2) to [out=-135,in=-45] (v1);
        \draw[cleg] (v3) -- (v1);  \draw[cleg] (v4) -- (v2);
    }.
\end{equation}
The local Markov property implies that $\vertname_2$ is independent of $\vertname_3$ (which is the only non-descendent which is not a parent of $\vertname_2$), conditioned on $\parnodes{\vertname_2}=\{\vertname_1,\vertname_4\}$, that is \linebreak${(X_2 \indep X_3 | X_1, X_4)_{\graphname}}$, where $X_i$ is the random variable associated to the vertex $\vertname_i$. 
However, it holds that $(\vertname_2\not\perp^d \vertname_3 |\vertname_1,\vertname_4)_{\graphname}$, since the path $\vertname_2 \rightarrow \vertname_1\leftarrow \vertname_3$ is a collider, hence conditioning on $\vertname_1$ $d$-connects $\vertname_2$ and $\vertname_3$ allowing the associated variables to be conditionally dependent in a general functional model on this graph. This highlights that in cyclic graphs, $d$-separation, Markovianity (as in \cref{def:Markov}) and the local Markov property are not equivalent.

\section{Discussion and outlook}
\label{sec: discussion}

We have developed a general framework applicable to all consistent cyclic 
functional causal models defined on finite-cardinality variables. The formalism includes both uniquely and non-uniquely solvable models and only excludes pathological models which admit no solutions for all possible valuations of the exogenous vertices\footnote{In case the graph contains no exogenous vertices, these would be models admitting no solutions.}.
We applied this framework to solve two open problems, for all (possibly cyclic) causal models describable within this formalism: (1) providing a general and robust method to uniquely fix the probability distribution, while recovering the known probability rule in the uniquely-solvable (and hence acyclic) case, and (2) introducing a new sound and complete graph-separation property, called $p$-separation, for cyclic models, which is equivalent to $d$-separation in the acyclic case. This was achieved by introducing the concept of a classical post-selected teleportation protocol (inspired by an analogous quantum information protocol), which enabled us to map cyclic functional causal models to acyclic ones with post-selection. Moreover, we defined a new class of models, averagely uniquely solvable functional causal models, finding this to be a strict superset of uniquely solvable ones, and the largest set where the previously known probability rule based on Markov factorization relative to the underlying graph is recovered. 

There are several interesting avenues for future investigations and we discuss some of them below.

\begin{myitem}
    \item \textbf{Causal discovery algorithms:} These algorithms aim to ascertain the underlying causal structure (graph) from observable data, or a set of possible graphs compatible with the data. In acyclic graphs, the $d$-separation theorem plays a crucial role in such algorithms (see \cite{Spirtes2016}).
    Causal discovery algorithms in the cyclic case have been proposed for different subsets of models, e.g., for modular functional causal models based on $\sigma$-separation theorem \cite{Mooij2020}. As we discussed in \cref{sec:pseparation}, in the finite-cardinality case, our framework and the $p$-separation theorem apply to more general models which include non-uniquely solvable ones.     
A natural next step would be to study whether robust causal discovery algorithms based on $p$-separation can be developed, as this would be applicable to all consistent finite-cardinality (possibly cyclic) functional causal models. 

\item \textbf{Causal compatibility problems:} This involves determining whether a given distribution could arise from any classical causal model (here, a functional causal model) on a given causal structure. Again, as this involves relating the causal structure and observable distributions, it is also intimately linked to the $d$-separation theorem. This problem has applications for classical causal inference on the one hand, because focussing on classical causal explanations, incompatibility with a graph would allow to rule out that graph as an explanation \cite{Pearl_2009}. On the other hand, if a distribution generated by a quantum causal model on the same graph\footnote{That is, using quantum states and measurements wired together as suggested by the graph. See \cite{Quantum_paper} for details.} is classically incompatible as above, this would certify a quantum-classical gap in that causal structure. Bell's seminal theorem~\cite{Bell_1964} provides the first instance of an acyclic graph with such a gap, which is verifiable experimentally and has lead to several quantum information processing applications (see \cite{Brunner_2014} for a review on Bell nonlocality, including relevant experiments and applications).
Our contributions enable to extend such studies to cyclic graphs.
In particular, since our framework maps cyclic causal modeling problems to acyclic causal modeling with post-selection, it is likely that existing techniques developed for solving acyclic causal compatibility problems can be ported to cyclic causal compatibility problems. For instance, it would be interesting to consider if the inflation technique for causal compatibility~\cite{Wolfe_2019} would generalize to cyclic causal models, using these ideas. 
\item 
\textbf{Correlation gaps between $d$-, $\sigma$- and $p$-separation and unfaithful models:} 
In \cref{sec:pseparation}, we demonstrated that in the graph of \cref{eq: example_sigmasep}, there exists a $d$-separation relation that aligns with $p$-separation but not with $\sigma$-separation. Combined with our $p$-separation theorem, this implies that for all functional causal models within our framework, the resulting correlations would be unfaithful or fine-tuned relative to $\sigma$-separation (as conditional independence holds, yet there is $\sigma$-connection).\footnote{A causal model is unfaithful (or fine-tuned) relative to a graph-separation property if the model entails a conditional independence between three sets of variables whose vertices are connected in the graph according to the property. If this is never the case, the model is faithful relative to the given graph-separation property.} This finding further suggests that within this graph, a correlation gap between $d$- and $\sigma$-separation cannot be established using functional models in our framework. However, there do exist continuous variable models, not captured within our framework, that are $\sigma$-faithful for this graph and certify such a gap \cite{Forre_2017, forre_2018}. Consequently, our results, together with these prior examples, also point to a correlation gap between finite-cardinality and infinite-cardinality or continuous functional causal models for the graph in \cref{eq: example_sigmasep}\footnote{In conjunction with the results of \cite{Quantum_paper}, this also highlights a gap between all finite-dimensional quantum causal models and continuous variable functional causal models in the graph of \cref{eq: example_sigmasep}.}. 

Similarly, we identified another situation—the graph in \cref{eq: dsep_cycle_example}—where the vertices $\vertname_3$ and $\vertname_4$ are both $d$- and $\sigma$-separated but not $p$-separated. The soundness of $\sigma$-separation in all modular functional causal models \cite{Forre_2017} implies that such models will be unfaithful relative to $p$-separation. Furthermore, it indicates that a correlation gap between $d$- and $p$-separation may not be certifiable through such models for this particular graph.

Whether these curious observations generalize to arbitrary directed graphs remains an open question. This opens new directions for future exploration into correlation gaps between constraints imposed by different graph separation properties and among various classes of causal models on a given directed graph. Additionally, in the acyclic case, it is well-established that the set of unfaithful causal models on a graph (i.e., where there is $d$-connection and yet conditional independence) forms a measure-zero set \cite{Pearl_2009, Spirtes_2005}. The above discussion sheds light on the open problem regarding the generalization of this result for cyclic graphs, for (un)faithfulness defined relative to the distinct properties of $\sigma$- or $p$-separation (both of which reduce to $d$-separation in the acyclic case).

\item \textbf{Links between graph separation properties and Markovianity:}  
As discussed in~\cref{sec:linksmarkovianity}, the notion of Markovianity and the soundness of $d$-separation are equivalent for functional models on acyclic graphs. However, this equivalence no longer holds for models on cyclic graphs. It would be interesting to characterize the class of cyclic models in our framework for which the equivalence is recovered. In particular, although we found that the Markov factorization is recovered for all averagely uniquely solvable models, the example of \cite{Neal_2000} (a uniquely solvable model) demonstrates that the $d$-separation theorem cannot be recovered for all such models. Therefore, tighter conditions would be required for the equivalence, and it would also be fruitful to consider other types of Markov properties such as the local Markov property, corresponding to conditional independence between a vertex and its non-descendants given its parents (which is also equivalent to $d$-separation properties in the acyclic case). In addition, relations between Markov properties and $p$-separation are yet to be explored as well as possible uses of $p$-separation in extending previous studies on Markov equivalence classes~\cite{Richardson_1997,claassen_2023}.

\item \textbf{Characterising classical processes with ``indefinite causal order'':} We note a striking similarity between our results of \cref{sec:linksmarkovianity} (regarding average unique solvability) and those in the literature on so-called indefinite causal order or higher-order processes \cite{Oreshkov_2012, Chiribella_2013, Baumeler_2016} studied in the quantum information community. Specifically, classical processes in such formalisms are higher-order transformations from 
functions to functions, where the set of input functions could be composed together in a manner where one cannot identify a definite acyclic ordering between them \cite{Baumeler_2016}. It was shown in \cite{Baumeler_2016_fixedpoints} that valid classical processes are characterized by having exactly one fixed point, and, by allowing probabilistic mixtures of such processes, the requirement becomes that of having one fixed point of average. There as well, one generally considers functions on finite alphabets, as in our functional causal modeling framework.

It is not difficult to see that each such classical process together with the input functions on which it acts specifies a (possibly cyclic) functional causal model that is valid in our framework\footnote{This is also implied by the our results in the accompanying paper \cite{Quantum_paper} for general quantum processes.}. This highlights a general link between average uniquely solvable functional causal models defined here and the class of classical higher-order processes. Thus, it indicates future avenues to investigate open problems on the characterization of higher-order processes by linking them to problems regarding solvability and Markov properties of cyclic functional models, and to study these in mutual synergy within a common formalism.

\item \textbf{Incorporating interventions:} We have focussed on properties of correlations obtainable through passive observation in cyclic fCMs. Another natural extension of this work would be to incorporate active interventions within this framework and study the consistency and solvability properties of fCMs under interventions. The class of allowed interventions is also expected to play a role. Commonly, do-interventions are considered which fix a variable to a certain value independently of its parents \cite{Pearl_2009, Spirtes_2005}. The above-mentioned links to higher-order processes also suggests more general interventions where the functional dependences may be replaced without ``cutting off'' a vertex from its parents.

\item \textbf{Generalization to infinite-cardinality random variables:} Our framework is applicable to functional causal models associated with finite-cardinality random variables. The generalization to infinite-cardinality models (including both discrete and continuous random variables) is not immediate because it would require post-selecting on a measure-zero event. Therefore, the question on whether and how the probability rule and $p$-separation can be generalized to (possibly non-uniquely solvable) infinite-cardinality models remains open. Whether the discrete with infinite-cardinality vs continuous case impacts this generalization is also open. In particular, the discussions comparing $p$ and $\sigma$ separation \cite{Forre_2017} above and in \cref{sec:pseparation} suggest that for $p$-separation as defined here, soundness could fail in continuous variable functional causal models. It would be interesting to investigate possible extensions of the classical post-selected teleportation protocol introduced here to the infinite-cardinality case, to explore modifications to $p$-separation that may arise in that case. 

\end{myitem}

\bigskip

\paragraph{Acknowledgements.} 

We are grateful to Elie Wolfe for valuable feedback on our framework and its relation to results in classical cyclic causal models. We are thankful to Y\'{i}l\'{e} Y\={\i}ng for suggesting the name $p$-separation for the new graph separation property that we propose here. VV's research has been supported by an ETH Postdoctoral Fellowship. VV acknowledges support from ETH Zurich Quantum Center, the Swiss National Science Foundation via project No.\ 200021\_188541 and the QuantERA programme via project No.\ 20QT21\_187724. VG and CF acknowledge support from NCCR SwissMAP, the ETH Zurich Quantum Center and the Swiss National Science Foundation via project No.\ 20QU-1\_225171. CF acknowledges support from the ETH Foundation.

\newpage
\bibliographystyle{mybibstyle}

\newcommand{\etalchar}[1]{$^{#1}$}

\newpage
\appendix
\section{Proofs of~\cref{sec:fCM_to_afCM}}
\label{app:proofs map}
In this section, we provide proofs of the results in~\cref{sec:fCM_to_afCM}.

\indepprob*
\begin{proof}
    Assume that $\teleprob$ depends on the probability being teleported, i.e., $\teleprob=\teleprob\big(\probex{A}\big)$, and for all $c\in\outcomemaparg{C}$,
    \begin{equation}
        \sum_{\substack{a\in\outcomemaparg{A}\\b\in\outcomemaparg{B}}} \probex{A}(a)\delta_{\ctelefunc(a,b,c), 1} \cteleprob_B(b)\cteleprob_C(c)=\teleprob\big(\probex{A}\big) \probex{A}(c),
    \end{equation}
    and assume that there exist two distinct probabilities $\probex{A}$ and ${p'}^{A}$ such that $\teleprob(\probex{A})\neq\teleprob({p'}^{A})$. Consider a mixture of the two, i.e., $\lambda\probex{A}+(1-\lambda){p'}^{A}$ for some $\lambda\in[0,1]$, and teleport it:
    \begin{gather}
         \sum_{a,b} \left[\lambda\probex{A}(a)+(1-\lambda){p'}^{A}(a)\right]\delta_{\ctelefunc(a,b,c), 1} \cteleprob_B(b)\cteleprob_C(c)\\\label{eq:cptele e1}
         =\teleprob\left(\lambda\probex{A}+(1-\lambda){p'}^{A}\right) \left[\lambda\probex{A}(c)+(1-\lambda){p'}^{A}(c)\right].
    \end{gather}
    By linearity on the left-hand side of the equation, we have
     \begin{align}
         &\sum_{a, b} \left[\lambda\probex{A}(a)+(1-\lambda)\probex{A'}(a)\right]\delta_{\ctelefunc(a,b,c), 1} \cteleprob_B(b)\cteleprob_C(c)\nonumber\\
         &= \lambda\sum_{a, b} \probex{A}(a) \delta_{\ctelefunc(a,b,c), 1} \cteleprob_B(b)\cteleprob_C(c)+(1-\lambda)\sum_{a, b}\probex{A'}(a)\delta_{\ctelefunc(a,b,c), 1} \cteleprob_B(b)\cteleprob_C(c) \nonumber\\
         \label{eq:cptele e2}
         &= \lambda\teleprob\big(\probex{A}\big)\probex{A}(c)+(1-\lambda)\teleprob\big(\probex{A'}\big)\probex{A'}(c).
    \end{align}
    Combining~\cref{eq:cptele e1,eq:cptele e2} we have
     \begin{align}
     \label{eq:cptele e3}
        &\lambda\teleprob\left(\probex{A}\right)\probex{A}(c)+(1-\lambda)\teleprob\left({p'}^{A}\right)\probex{A'}(c)\nonumber\\
        &=\teleprob\left(\lambda\probex{A}+(1-\lambda){p'}^{A}\right) \left[\lambda\probex{A}(c)+(1-\lambda){p'}^{A}(c)\right],
    \end{align}
    and marginalising over $c$, 
    \begin{equation}
        \lambda\teleprob\big(\probex{A}\big) + (1-\lambda) \teleprob\big({p'}^{A}\big)
        = \teleprob\left(\lambda\probex{A}+(1-\lambda){p'}^{A}\right).
    \end{equation}
    Inserting the last equation in~\cref{eq:cptele e3}, we get a quadratic expression in $\lambda$ which has to hold for all $\lambda\in[0,1]$:
    \begin{equation}
        \left(\teleprob\big(\probex{A}\big)-\teleprob\big({p'}^{A}\big)\right) \left(\probex{A}(c)-{p'}^{A}(c)\right)\lambda^2 + (\dots)\lambda+(\dots) = 0.
    \end{equation}
    This implies that that the term before $\lambda^2$ should vanish for all $c\in\outcomemaparg{C}$. This contradicts our assumption that $\probex{A}\neq{p'}^{A}$ and $\teleprob(\probex{A})\neq\teleprob({p'}^{A})$.
\end{proof}

\copyprop*
\begin{proof}
    Consider a deterministic distribution $\probex{A}(a)=\delta_{a,\bar{a}}$, we have
    \begin{gather}
        \teleprob\delta_{\bar{a},c}=\sum_{\substack{a\in\outcomemaparg{A}\\b\in\outcomemaparg{B}}} \delta_{a,\bar{a}}\delta_{\ctelefunc(a,b,c), 1} \cteleprob_B(b)\cteleprob_C(c)=\sum_{b}\delta_{\ctelefunc(\bar{a},b,c), 1} \cteleprob_B(b)\cteleprob_C(c),
    \end{gather}
    which gives the result up changing the name of $\bar{a}$ to $a$.
\end{proof}

\begin{lemma}
    The uniform prior post-selected teleportation protocol (\cref{def:uniform prior tele}) is a valid classical post-selected teleportation protocol with $\teleprob=1/|\outcomemaparg{A}|$.
\end{lemma}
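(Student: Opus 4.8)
The plan is to simply substitute the specifications of \cref{def:uniform prior tele} directly into the defining equation of a classical post-selected teleportation protocol (\cref{def:classical ps tele}) and verify that it holds with $\teleprob = 1/|\outcomemaparg{A}|$. Concretely, I would fix an arbitrary probability distribution $\probex{A}:\outcomemaparg{A}\mapsto[0,1]$ and an arbitrary $c\in\outcomemaparg{C}$, and compute the left-hand side
\begin{equation}
\sum_{\substack{a\in\outcomemaparg{A}\\b\in\outcomemaparg{B}}} \probex{A}(a)\,\delta_{\ctelefunc(a,b,c),1}\,\cteleprob_B(b)\,\cteleprob_C(c).
\end{equation}

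The first step is to observe that for this protocol $\ctelefunc(a,b,c)=\ctelefunc(a,c)=\delta_{a,c}$ is independent of $b$, so the factor $\delta_{\ctelefunc(a,b,c),1}=\delta_{\delta_{a,c},1}=\delta_{a,c}$ can be pulled out of the sum over $b$, leaving $\sum_{b\in\outcomemaparg{B}}\cteleprob_B(b)=1$ since $\cteleprob_B$ is a probability distribution (this is where we use that $\outcomemaparg{B}$ and $\cteleprob_B$, though formally irrelevant, can be taken to be any valid finite set with any distribution on it). The second step is to substitute $\cteleprob_C(c)=1/|\outcomemaparg{A}|$ and evaluate $\sum_{a\in\outcomemaparg{A}}\probex{A}(a)\,\delta_{a,c} = \probex{A}(c)$, yielding $\tfrac{1}{|\outcomemaparg{A}|}\probex{A}(c)$. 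Reading off $\teleprob = 1/|\outcomemaparg{A}|$, the third step is to check $\teleprob\in(0,1]$: since $\outcomemaparg{A}$ is a non-empty finite set, $|\outcomemaparg{A}|$ is a positive integer, so $1/|\outcomemaparg{A}|\in(0,1]$.

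There is no real obstacle here: the statement is essentially a consistency check that \cref{def:uniform prior tele} is not vacuous, and every step is a one-line manipulation of Kronecker deltas and the normalization of probability distributions. The only point requiring the tiniest bit of care is making sure the edge case $|\outcomemaparg{A}|=1$ is allowed (it gives $\teleprob=1$, still in the required range), and noting that the independence of $\ctelefunc$ from $b$ is precisely what makes the choice of $\outcomemaparg{B},\cteleprob_B$ immaterial, consistent with the remark in \cref{def:uniform prior tele}.
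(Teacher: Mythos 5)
Your proposal is correct and matches the paper's proof, which likewise just substitutes $\ctelefunc(a,c)=\delta_{a,c}$ and $\cteleprob_C(c)=1/|\outcomemaparg{A}|$ into the defining equation and evaluates $\sum_a \probex{A}(a)\delta_{a,c}=\probex{A}(c)$. Your extra remarks — summing out $\cteleprob_B$ explicitly and checking $\teleprob\in(0,1]$ — are minor elaborations the paper leaves implicit.
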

\begin{proof}
    We have, for all distribution $\probex{A}$ over $\outcomemaparg{A}$,
    \begin{equation}
        \sum_{a\in\outcomemaparg{A}} \probex{A}(a)\delta_{\delta_{a,c}, 1} \frac{1}{|\outcomemaparg{A}|}= \sum_{a\in\outcomemaparg{A}} \probex{A}(a)\delta_{a,c}\frac{1}{|\outcomemaparg{A}|}=\frac{\probex{A}(c)}{|\outcomemaparg{A}|}. \qedhere
    \end{equation}
\end{proof}

\acyclicity*
\begin{proof}
     In the first step of \cref{def:sn_graph_family}, the graph $\graphname'$ is acyclic by definition.
    The second step introduces pre- and post-selection vertices, $\prevertname_\vertname$ and $\postvertname_\vertname$ for each $\vertname\in\splitvert{\graphname\sn}$, and associated edges according to \cref{eq:split_node}. 
    To show that this step preserves acyclicity, notice that a directed graph is acyclic if and only if it can be drawn on a page with all directed edges oriented from bottom to top of the page\footnote{This representation is equivalent to a Hasse diagram for partially ordered sets, recognising the fact that every directed acyclic graph induces a partial order and vice versa.}. 
    Since $\graphname'$ is acyclic, we can represent it on a page through such a diagram. 
    We now add the pre- and post-selection vertices to this diagram by drawing all the $\prevertname_\vertname$ at the bottom of the page, below all other vertices, and all the $\postvertname_\vertname$ at the top of the page, above all the other vertices. 
    Since all preselection vertices $\prevertname_\vertname$ only have outgoing edges and all post-selection vertices $\postvertname_\vertname$ only have incoming arrows, both of which will be oriented from bottom to top in our diagram, it follows that $\graphname\sn$ is acyclic.
\end{proof}

\differentgraphsequiv*

\begin{proof}
In what follows we will use the following result: given a functional model (\cref{def:functional_CM}) on an acyclic graph $\tilde{\graphname} = (\tilde V,\tilde E)$ and a global observed event $\outcome=\{\outcome_{\vertname}\in\outcomemaparg{\vertname}\}_{\vertname\in\tilde{\vertset}}$, the acyclic probability rule~\cref{def: distribution_functional_cm} can be factorized as follows for $\outcome_0\in\outcome$ associated with the vertex $\vertname_0\in\tilde{\vertset}$:
\begin{equation}
    \probfacyc(x)_{\tilde{\graphname}} = \sum_{u} p(u,\bar{x})\prod_{\vertname'\in\childnodes{\vertname_0}} \delta_{\funcarg{\vertname}(\outcome_{\parnodes{\vertname}}, u_{\vertname}),x_{\vertname}}\probex{\vertname}(u_{\vertname})
\end{equation}
where we denoted $\bar{x}:=\{x_v\}_{v\in \tilde V \setminus \childnodes{v_0}}$ and defined 
\begin{equation}
\label{eq:p and f defs1}
    p(u,\bar{x})
    = \prod_{\substack{
        \vertname\in\tilde{V}\\
        \vertname\notin\childnodes{\vertname_0}
    }} 
    \probex{\vertname}(x_{\vertname}) 
    \delta_{\funcarg{\vertname}(x_{\parnodes{\vertname}},u_{\vertname}),\outcome_\vertname}.
\end{equation}
Notice that $\sum_{u}p(u,\bar{x})$ is just the marginal $\probfacyc(\bar x)_{\tilde \graphname}$ where $\bar x = \{x_v\}_{v \in \tilde V \setminus \childnodes{v_0}}$.
With this in mind, we first proceed with proving the result in a special case.
    \begin{itemize}
        \item[]\textbf{Proof in a special case.}  
    Consider the special case where $\graphname_1$ and $\graphname_2$ were constructed by choosing
    \begin{align}
        \splitvert{\graphname_2} = \splitvert{\graphname_1} \cup \{\vertname_0\}, 
    \end{align}
    i.e., the vertices which are split in $\graphname_2$ include all vertices that are split in $\graphname_1$ plus the vertex $\vertname_0$. Let $\psvertset^1$ denote the set of post-selection vertices of $\graphname_1$ and $\postoutcome=\{\postoutcome_{\postvertname}\in \{0,1\}\}_{\postvertname\in\psvertset^1}$ the set of outcomes associated to them. Then, taking $\postoutcome_0$ to be the outcome of the post-selection vertex associated to the vertex $\vertname_0$, we have $\postoutcome \cup \postoutcome_0:=\{\postoutcome_{\postvertname}\in \{0,1\}\}_{\postvertname\in\psvertset^2}$ as outcomes associated to the post-selection vertices of $\graphname_2$.

    Denoting with $(\outcome = \{\outcome_\vertname\in\outcomemaparg\vertname\}_{\vertname\in\vertset}, t)$ an event on $\graphname_1$, consider the probability $\probfacyc(\outcome,\postoutcome)_{\graphname_1}$ associated to the causal model $\fcm_{\graphname_1}$. 
    As this is an acyclic causal model by construction, the probability is immediately given by applying the acyclic probability rule of \cref{def: distribution_functional_cm} to this functional model. 
    As argued above, this can be factorized\footnote{This factorization corresponds to considering the product of all probabilities and delta functions of the functional model $\fcm_{\graphname_1}$ but leaving the multiplication along the split vertex $\vertname_0$ as the last step.} as
    \begin{align}
        \label{eq:proof graphfamily cl 1}
      \probfacyc(\outcome,\postoutcome)_{\graphname_1} = \sum_u p(u,\bar{x},t)\prod_{\vertname\in\childnodes{\vertname_0}} \delta_{\funcarg{\vertname}(\outcome_{\parnodes{\vertname}}, u_{\vertname}),x_{\vertname}}\probex{\vertname}(u_{\vertname})
    \end{align}
    where $p(u,\bar{x},t)$ was defined as prescribed in~\cref{eq:p and f defs1}\footnote{Note that for $\graphname_1$, the joint observed event includes the post-selections, i.e., it is $x\cup t$. Since $\vertname_0$ is not split in $\graphname_1$, $T\not\in\childnodes{\graphname_1}$ for all $T\in\postvertname$.}.
    Let $(\ctelefunc,\cteleprob_B,\cteleprob_C)$ be the classical post-selected teleportation protocol which implements the splitting of $\vertname_0$ in the functional model $\fcm_{\graphname_2}$, with associated pre- and post-selection vertices $\prevertname_0$ and $\postvertname_0$, and teleportation probability $\teleprob\in(0,1]$.
    For all, $a,c\in\outcomemaparg{\vertname_0}$, it holds that (\cref{lem: copy property aug})
    \begin{equation}
         \sum_{b\in\outcomemaparg{B}}\delta_{\ctelefunc(a,b,c), 1} \cteleprob_B(b)\cteleprob_C(c)=\teleprob \delta_{a,c}.
    \end{equation}
     Using this equality, we can rewrite
    \begin{equation}
         \begin{split}
             \probfacyc(\outcome,\postoutcome)_{\graphname_1} &= \sum_u p(u,\bar{x},t) \prod_{\vertname\in\childnodes{\vertname_0}} \delta_{\funcarg{\vertname}(\outcome_{\parnodes{\vertname}}, u_{\vertname}),x_{\vertname}} \probex{\vertname}(u_{\vertname})\\
             &=\sum_up(u,\bar{x},t) \prod_{\vertname\in\childnodes{\vertname_0}}\delta_{\funcarg{\vertname}(\outcome_{\parnodes{\vertname}\setminus \vertname_0}, x_0, u_{\vertname}),x_{\vertname}}\probex{\vertname}(u_{\vertname}) \sum_{r_0\in\outcomemaparg{\vertname_0}} \delta_{x_0,r_0} \\
             &=\sum_up(u,\bar{x},t) \prod_{\vertname\in\childnodes{\vertname_0}}\probex{\vertname}(u_{\vertname})\sum_{r_0} \delta_{\funcarg{\vertname}(\outcome_{\parnodes{\vertname}\setminus \vertname_0}, r_0, u_{\vertname}),x_{\vertname}}\delta_{x_0,r_0} \\
            &=\frac{1}{\teleprob} \sum_u p(u,\bar{x},t)  \prod_{\vertname\in\childnodes{\vertname_0}} \probex{\vertname}(u_{\vertname})\\
            &\hspace{1cm}\sum_{r_0,b}\delta_{\funcarg{\vertname}(\outcome_{\parnodes{\vertname}\setminus \vertname_0}, r_0, u_{\vertname}),x_{\vertname}}\delta_{\ctelefunc(x_0,b,r_0), 1}\cteleprob_B(b) \cteleprob_C(r_{0})\\
            &=\frac{1}{\teleprob} \probfacyc(\outcome,\postoutcome,\postoutcome_0 = 1)_{\graphname_2},
         \end{split}
    \end{equation}
    where in $\funcarg{\vertname}$ we separated the dependency on the parent node $\vertname_0$ as $\funcarg{\vertname}(\outcome_{\parnodes{\vertname}\setminus \vertname_0}, x_0, u_{\vertname})$.
    In the above steps, we have used that dividing by $\teleprob$ is allowed as $\teleprob>0$ by definition and \cref{lem: copy property aug} between the third to fourth step above.
    In the last equation, we used that $\fcm_{\graphname_1}$ and $\fcm_{\graphname_2}$ can be chosen to have the same associated $p(u,\bar{x},t)$ (to see this, one may restrict all the post-selected teleportation protocols to be implemented as in \cref{def:uniform prior tele} --- if the equality holds in this case, the general result follows from \cref{corollary:probs indep of tele implementation classic}).
    
    We can now relate the success probabilities $\successprob^{(1)}$ and $\successprob^{(2)}$ respectively of the functional model $\fcm_{\graphname_1}$ on the classical teleportation graph $\graphname_1$ and on $\fcm_{\graphname_2}$ on $\graphname_2$.
    By \cref{def: c_success_probability} and using that $\teleprob>0$, we have
    \begin{align}
    \label{eq:relating prob succ}
        \successprob^{(1)}
        &= \sum_\outcome \probfacyc(\outcome,\postoutcome = 1)_{\graphname_1} \\
        &= \frac{1}{\teleprob} \sum_\outcome  \probfacyc(\outcome,\postoutcome = 1,\postoutcome_0=1)_{\graphname_2} \\
        &= \frac{1}{\teleprob} \successprob^{(2)},
    \end{align}
    where we denoted with $\postoutcome = 1$ the event $\{\postoutcome_{\postvertname} =1\}_{\postvertname\in\psvertset^1}$, and the sum $\sum_\outcome$ runs over all $\outcome = \{\outcome_\vertname\in\outcomemaparg\vertname\}_{\vertname\in\vertset}$.
    
    We see that the two success probabilities differ by a multiplicative constant (the success probability $\teleprob$ of the post-selected teleportation protocol defined by $\fcm_{\graphname_1}$ for the split vertex $\vertname_0$), and hence, $\successprob^{(1)}$ is zero if and only if $\successprob^{(2)}$ is zero.
    Thus, the probabilities $\probfacyc(x|t=1)_{\graphname_1}$ are defined if and only if the probabilities $\probfacyc(x|t=1,t_0=1)_{\graphname_2}$ are defined. In this case, we can relate them as follows:
    \begin{equation}
        \begin{split}
            \probfacyc(x|t=1)_{\graphname_1}
            &= \frac{\probfacyc(\outcome,\postoutcome=1)_{\graphname_1}}{\successprob^{(1)}}\\
            &= \frac{\teleprob}{\teleprob} \frac{\probfacyc(\outcome,\postoutcome=1,\postoutcome_0=1)_{\graphname_2}}{\successprob^{(2)}} \\
            &= \probfacyc(x|t=1,\postoutcome_0=1)_{\graphname_2},
        \end{split}
    \end{equation}
    which completes the proof in this case.
    \item[]\textbf{Proof in the general case.} Consider two general elements $\graphname_1, \graphname_2$ of $\graphfamilysn{\graphname}$.
    We can prove the general statement by first noticing that the repeated application of the above argument can be used to prove that the probabilities of both $\graphname_1$ and $\graphname_2$ are equivalent to those of $\graphname_0$, where the latter corresponds to the graph where all the vertices have been split (i.e., $\splitvert{\graphname_0} = \vertset$, where $\graphname = (\vertset,\edgeset)$).
    The result follows by transitivity. \qedhere
    \end{itemize}
\end{proof}

\selfcyclecl*
\begin{proof}
    We have shown that the probability rule is independent on the set of split vertices $\splitvert{\graphname\sn}$, hence we can consider the element $\graphname_0=(\vertset_0,\edgeset_0)$ of $\graphfamilysn{\graphname}$ where $\splitvert{\graphname\sn} = \vertset$, i.e., all vertices are split and associated with pre- and post-selection vertices and edges. 
    Therefore, the set of vertices of $\graphname_0$ is $\vertset_0=\vertset\cup \{\prevertname_{\vertname}, \postvertname_\vertname\}_{\vertname\in\vertset}$, i.e., for each vertex in $\graphname$ there is a pre- and post-selection pair of vertices in $\graphname_0$. 
    For $\vertname\in\vertset$ we denote with $(\ctelefunc^\vertname,\cteleprob^\vertname_B,\cteleprob^\vertname_C)$ the post-selected teleportation protocol associated to the corresponding pre- and post-selection pair.
    
    Denoting with $t_{\vertname}\in\{0,1\}$ the outcome associated to $\postvertname_{\vertname}$ and with $r:=\{r_\vertname\in\outcomemaparg{\vertname}\}_{\vertname\in\vertset}$ the outcomes associated to the preselection vertices $\{\prevertname_{\vertname}\}_{\vertname\in\vertset}$, we have (\cref{def: distribution_functional_cm})
    \begingroup
      \crefname{lemma}{\textup{lem.}}{\textup{Lem.}}
    \begin{align}
        \probfacyc(\outcome, \{\postoutcome_\vertname = 1\}_{\vertname\in\vertset})_{\graphname_0}
        &= \sum_{r}\probfacyc(\outcome, r, \{\postoutcome_\vertname = 1\}_{\vertname\in\vertset})_{\graphname_0} \nonumber\\
        &= \sum_{r,b,u} \prod_{\vertname\in\vertset} \probex{\vertname} (u_{\vertname})\delta_{\outcome_{\vertname}, \funcarg{\vertname}(r_{\parnodes{\vertname}},u_\vertname)} \delta_{\ctelefunc^{\vertname}(\outcome_\vertname,b_\vertname,r_{\vertname}),1} \cteleprob_B^{\vertname}(b_\vertname) \cteleprob_C^{\vertname}(r_\vertname) \nonumber\\
        &\hspace{-0.33cm}\stackrel{\cref{lem: copy property aug}}{=} \sum_{r,u} \prod_{\vertname\in\vertset} \probex{\vertname} (u_{\vertname})\delta_{\outcome_{\vertname}, \funcarg{\vertname}(r_{\parnodes{\vertname}},u_\vertname)} \teleprob^{(\vertname)} \; \delta_{\outcome_{\vertname},r_{\vertname}}  \nonumber\\
        &= \teleprob^{\text{tot}}\sum_{u} \prod_{\vertname\in\vertset} \probex{\vertname} (u_{\vertname})\delta_{\outcome_{\vertname}, \funcarg{\vertname}(\outcome_{\parnodes{\vertname}},u_\vertname)} \nonumber\\
    \end{align}
    \endgroup
    where $\teleprob^{\text{tot}}=\prod_{\vertname\in\vertset}\teleprob^{(\vertname)}$ and $\teleprob^{(\vertname)}$ is the probability of successful post-selection of the implementation chosen for the vertex $\vertname$. Therefore,
    \begin{align}
        \probf(\outcome)_{\graphname} 
        &=\frac{\probfacyc(\outcome, \{\postoutcome_\vertname = 1\}_{\vertname\in\vertset})_{\graphname_0}
        }{
        \sum_{\outcomealt}\probfacyc(\outcomealt, \{\postoutcome_\vertname = 1\}_{\vertname\in\vertset})_{\graphname_0}
        } \nonumber\\
        &= \frac{
        \teleprob^{\text{tot}}\sum_{u} \prod_{\vertname\in\vertset} \probex{\vertname} (u_{\vertname})\delta_{\outcome_{\vertname}, \funcarg{\vertname}(\outcome_{\parnodes{\vertname}},u_\vertname)}
        }{
        \teleprob^{\text{tot}}\sum_{y,u} \prod_{\vertname\in\vertset} \probex{\vertname} (u_{\vertname})\delta_{\outcomealt_{\vertname}, \funcarg{\vertname}(\outcomealt_{\parnodes{\vertname}},u_\vertname)}
        } \nonumber\\
        &= \frac{
        \sum_{u} \prod_{\vertname\in\vertset} \probex{\vertname} (u_{\vertname})\delta_{\outcome_{\vertname}, \funcarg{\vertname}(\outcome_{\parnodes{\vertname}},u_\vertname)}
        }{
        \sum_{y,u} \prod_{\vertname\in\vertset} \probex{\vertname} (u_{\vertname})\delta_{\outcomealt_{\vertname}, \funcarg{\vertname}(\outcomealt_{\parnodes{\vertname}},u_\vertname)}
        }. \qedhere
    \end{align}
\end{proof}

\section{Proofs of~\cref{sec:pseparation}}
\label{app:pseparation}
We provide the proof of~\cref{theorem: psep_theorem}. 
\pseptheorem*
\begin{proof}
\textbf{(Soundness)}
    From \cref{def: p-separation}, we have $(V_1\perp^p V_2|V_3)_{\graphname} \equiva \exists  \graphname\sn\in \graphfamilysn\graphname$ such that $(V_1\perp^d V_2|V_3\cup\psvertset)_{\graphname\sn}$, where $\psvertset$ is the set of all post-selection vertices in the chosen classical teleportation graph $\graphname\sn$. 
    From \cref{theorem: dsep theorem}, it then follows that the conditional independence $(X_1\indep X_2|X_3\cup\psvertset)_{(\probfacyc)_{\graphname\sn}}$ holds since $\graphname\sn$ is a directed acyclic graph (\cref{lemma: acyclicity_cl_telegraphs}). Finally, by \cref{def: probability distribution cyclic fcm}, the probability distribution associated with a functional model on the possibly cyclic graph $\graphname$ is given by the corresponding probability computed in any representative teleportation graph $\graphname\sn\in \graphfamilysn\graphname$ with an additional conditioning on the associated post-selection vertices $\psvertset$. Recall from \cref{lemma: acyclic_prob_same_func} that this distribution is the same, independently of which representative $\graphname\sn\in \graphfamilysn\graphname$ is chosen.  Therefore, using \cref{def: probability distribution cyclic fcm}, we know that $(X_1\indep X_2|X_3\cup \psvertset)_{(\probfacyc)_{\graphname\sn}}$ is equivalent to $(X_1\indep X_2|X_3)_{\probf_{\graphname}}$ by \cref{def: p-separation}, which completes the proof.\\

\noindent
\textbf{(Completeness)} We begin by making an important observation.

\hypertarget{obs1}{}\hyperlink{obs1}{\it Observation 1:} Consider a directed graph $\graphname:=(\vertset,\edgeset)$ and an arbitrary subgraph $\graphname':=(\vertset',\edgeset')$ of $\graphname$ with $\vertset'\subseteq \vertset$ and $\edgeset'\subseteq\edgeset$. 
Then any functional causal model $\fcm_{\graphname'}$ (\cref{def:functional_CM}) on the subgraph trivially induces a functional causal model $\fcm_\graphname$ on the original graph $\graphname$ where both models assign the same probabilities to the values $x':=\{x_\vertname\}_{\vertname\in\vertset'}$ of variables associated to $\vertset'$, i.e., $\prob_{\graphname'}(\outcome')=\prob_\graphname(\outcome')$. Explicitly, $\fcm_\graphname$ is constructed from the given $\fcm_{\graphname'}$ by including the same error distribution $\probex{\vertname}(u_\vertname)$ and same function $\funcarg{\vertname}$ to each $\vertname\in \vertset'$ in both models, in $\graphname$, these functions ignore any dependencies on parents of $\vertname$ associated with edges in $\edgeset\backslash \edgeset'$. For all remaining vertices, $\vertname\in \vertset\backslash \vertset'$, the error distributions $\probex{\vertname}(u_\vertname)$ can be arbitrary and the functions set $\outcome_\vertname=u_\vertname$. As the models $\fcm_\graphname$ and $\fcm_{\graphname'}$ are exactly the same for the common vertices $\vertset'$, it is immediate that the probabilities for $\outcome'$ are identical.

Now suppose we have a $p$-connection $ (V_1\not\perp^p V_2|V_3)_{\graphname}$, and consider two cases based on whether or not the corresponding $d$-connection holds. We will establish the existence of a causal model with the corresponding conditional dependence separately in each case.
    \begin{itemize}
        \item[] \textbf{Proof when $d$- and $p$-connections coincide}, i.e., $ (V_1\not\perp^d V_2|V_3)_{\graphname} $: 
        
        By \cref{def: d-sep} of $d$-separation, this $d$-connection implies the existence of a path between $V_1$ and $V_2$ that is not blocked by $V_3$. Since $V_1$ and $V_2$ are disjoint, such a path (all the vertices and edges included in the path) define an acyclic\footnote{This is the case because by definition a path involves distinct vertices.} subgraph $\graphname'$ of $\graphname$, formed by all the vertices of $\graphname$ but only the edges that contribute to the $d$-connecting path. The completeness of $d$-separation for classical acyclic causal models that is well-known~(\cref{theorem: dsep theorem}) then implies that there must exist a causal model on $\graphname'$ where $(X_1\not\indep X_2|X_3)_{\probfacyc_{\graphname'}}$. Since $\graphname'$ is a subgraph of $\graphname$, this also constitutes a causal model on $\graphname$ by \hyperlink{obs1}{\it Observation 1},\footnote{This causal model can potentially be fine-tuned relative to other $d$-separations i.e., there can be $d$-connections between other sets of vertices in $\graphname$ that are not accompanied by the corresponding conditional dependence in this causal model. The completeness statement here only requires the model to reproduce the conditional dependence for the given $d$-connection $ (V_1\not\perp^d V_2|V_3)_{\graphname} $. } which gives the required conclusion for the outcome sets of interest, $(X_1\not\indep  X_2|X_3)_{\probf_{\graphname}}$.
  
        \item[] \textbf{Proof when $d$- and $p$-connections do not coincide}, i.e., $ (V_1\perp^d V_2|V_3)_{\graphname} $:
  
        Here we have the $d$-separation $(V_1\perp^d V_2|V_3)_{\graphname} $ but the $p$-connection $(V_1\not\perp^p V_2|V_3)_{\graphname}$. The former $d$-separation implies the same in all teleportation graphs $\graphname\sn\in \graphfamilysn\graphname$ of $\graphname$, i.e., $(V_1\perp^d V_2|V_3)_{\graphname\sn} $. This is because teleportation graphs are constructed by removing edges and adding colliders (from $\psvertset$). Since these are not conditioned on in this $d$-separation, they cannot convert the given $d$-separation into a $d$-connection by \cref{def: d-sep}. 
        The $p$-connection $(V_1\not\perp^p V_2|V_3)_\graphname$ is equivalent, by \cref{def: p-separation}, to $d$-connection in some teleportation graph $\graphname\sn\in \graphfamilysn\graphname$ of $\graphname$, including additional conditioning on the set $\psvertset$ of post-selection vertices in $\graphname\sn$. That is, we have the following two conditions satisfied in this case:
        \begin{equation}
        \label{eq:psep_compl_1}
            \begin{aligned}
                (V_1&\perp^d V_2|V_3)_{\graphname\sn} \\
                (V_1&\not\perp^d V_2|V_3\cup\psvertset)_{\graphname\sn}.
            \end{aligned}
        \end{equation}

           This is equivalent to saying that there exist $v_1\in V_1$ and $v_2\in V_2$ such that 
        \begin{equation}
        \label{eq:psep_compl_2}
            \begin{aligned}
            (v_1 &\perp^d v_2|V_3)_{\graphname\sn} \\
            (v_1 &\not\perp^d v_2|V_3\cup\psvertset)_{\graphname\sn}.
            \end{aligned}
        \end{equation}

        Again, as in the previous case, we can use the existence of a path which $d$-connects $v_1$ and $v_2$ conditioned on $V_3$ and $\psvertset$ to construct an acyclic subgraph $\graphname\sn'$ of $\graphname\sn$, but introduce a slight modification to handle post-selection vertices. Here we consider any choice of $d$-connecting (or unblocking) path which entails the $d$-connection of \cref{eq:psep_compl_2} and the subgraph $\graphname\sn'$ is formed by all the vertices of $\graphname\sn$ but only keeping the edges which contribute to this $d$-connecting path, along with the edges $(\vertname,\postvertname_\vertname)$ and $(\prevertname_\vertname,\postvertname_\vertname)$ for each $\vertname \in \splitvert{\graphname\sn}$ which are present in $\graphname\sn$. Therefore by construction \cref{eq:psep_compl_2} also holds for $\graphname\sn'$, i.e.,  
        \begin{equation}
        \label{eq:psep_compl_3}
            \begin{aligned}
                (v_1&\perp^d v_2|V_3)_{\graphname\sn'} \\
                (v_1&\not\perp^d v_2|V_3\cup\psvertset)_{\graphname\sn'}.
            \end{aligned}
        \end{equation}
     
        This tells us that all paths between $v_1$ and $v_2$ in $\graphname\sn'$ are blocked by $V_3$, but there exists at least one such path that becomes unblocked when additionally conditioning on $\psvertset$. Thus, $\psvertset$ (being composed of childless vertices) act as colliders. 
        This implies that in the subgraph $\graphname\sn'$, there must be an unblocked path from $v_1$ to some $\postvertname\in \psvertset$ as well as an unblocked path from $v_2$ to some $\postvertname'\in \psvertset$ (not necessarily the same as $\postvertname$), and an unblocked path between $\postvertname$ and $\postvertname'$. That is $\exists \postvertname,\postvertname'\in \psvertset$ such that
        \begin{equation}
        \label{eq:psep_compl_4}
            \begin{aligned}
                (v_1 &\not\perp^d \postvertname|V_3)_{\graphname\sn'} \\
                (v_2 &\not\perp^d \postvertname'|V_3)_{\graphname\sn'} \\
                (\postvertname &\not\perp^d \postvertname'|V_3)_{\graphname\sn'}.
           \end{aligned}
        \end{equation}        
        We now construct a functional model $\fcm_{\graphname\sn'}$ on $\graphname\sn'$ (an acyclic graph) and use the $d$-connections of \cref{eq:psep_compl_4} to argue that the random variable $\bar X_1$ of $v_1 \in V_1$, with values $\bar x_1 = x_{\vertname_1}$, and the random variable $\bar X_2$ of $v_2 \in V_2$, with values $\bar x_2 = x_{\vertname_2}$, will become correlated given the outcomes of $V_3$ and $\postvertname,\postvertname'\in \psvertset$ in this functional model. 
        Using \hyperlink{obs1}{\it Observation 1}, this will induce a functional causal model on the teleportation graph $\graphname\sn$ with the same distribution on the shared vertices.     
        Through the connection between cyclic functional models on $\graphname$ and acyclic functional models on $\graphname\sn$ with post-selection on $\psvertset$ established in \cref{sec:fCM_to_afCM}, we will show that this model on $\graphname\sn$ implies a functional causal model $\fcm_{\graphname}$ on the original (possibly cyclic) $\graphname$ and imply the necessary conditional independence $(X_1\not \indep  X_2|X_3)_{\probf_{\graphname}}$ between the outcomes of $V_1$, $V_2$ and $V_3$ in $\fcm_{\graphname}$.
        
        Before defining the functional model $\fcm_{\graphname\sn'}$, we make another important observation.

        \hypertarget{obs2}{}\hyperlink{obs2}{\it Observation 2:}
            Recall that $\graphname\sn'$ is a subgraph of $\graphname\sn\in \graphfamilysn\graphname$ which is a classical teleportation graph of $\graphname$ (\cref{def:sn_graph_family}). Thus, the post-selection vertices, $\postvertname$, occurring in the subgraph $\graphname\sn'$ only occur in the following structure where $\vertname$ and $u$ are vertices belonging to the original graph $\graphname$ as well as $\graphname\sn'$. 
            \begin{equation}
            \label{eq:psep_compl_5}
                \centertikz{
                    \begin{scope}[xscale=2.6,yscale=1.3]
                        \node[onode] (in) at (0,0) {$\vertname$};
                        \node[onode] (out) at (1.5,1) {$u$};
                        \node[prenode] (pre) at (1,0) {$\prevertname$};
                        \node[psnode] (post) at (0.5,1) {$\postvertname$};
                        \draw[cleg] (pre) -- (post);
                        \draw[cleg] (in) --  (post);
                        \draw[cleg] (pre) --  (out);
                    \end{scope}
                }
            \end{equation}
            Indeed, we chose $\graphname\sn'$ to only have the edges present in a $d$-connecting (or unblocked) path associated with the $d$-connection $(X\not\perp^d Y|V_3\cup\psvertset)_{\graphname\sn'}$.
        We now define a generic functional model applicable to any graph as follows. 
        \begin{definition}[A functional model on an arbitrary graph]
            \label{def: proof_cm} 
            Associate a binary outcome $\outcome_{\vertname}\in\outcomemaparg{\vertname}:=\{0,1\}$ to each vertex $\vertname$ of the given graph. Every exogenous vertex $\vertname\in \exnodes$ is associated with a uniform distribution $\probex{\vertname}(\outcome_{\vertname}=0)=\probex{\vertname}(\outcome_{\vertname}=1)=\frac{1}{2}$ and every endogenous vertex $\vertname\in\nexnodes$ is associated with a function $\outcome_{\vertname} = \funcarg{\vertname}\big(\outcome_{\parnodes{\vertname}}\big):=\bigoplus\limits_{u\in \parnodes{\vertname} }\outcome_u$. Here, we have chosen a functional model where the functions deterministically relate each variable to its parental variables in the graph, and have thus applied the simplification allowed by the remark of \cref{remark:errorrv}.
        \end{definition}
        This fully specifies the functional model
        (\cref{def:functional_CM}). In particular, applying the above definition to the acyclic graph $\graphname\sn'$ we will denote the resulting functional model as $\fcm_{\graphname\sn'}$, and the induced model (by \hyperlink{obs1}{\it Observation 1}) on $\graphname\sn$, of which $\graphname\sn'$ is a subgraph, as $\fcm_{\graphname\sn}$\footnote{
            \hyperlink{obs1}{\it Observation 1} here implies that, in case the parent set of a vertex does not coincide between $\graphname\sn$ and $\graphname\sn'$, the association of $\outcome_{\vertname} = \funcarg{\vertname}\big(\outcome_{\parnodes{\vertname}}\big):=\bigoplus_{u\in \parnodes{\vertname} }\outcome_u$ refers to the parent set of $\graphname\sn'$.
        }. 
        
        The functional dependences in the above model encode that the outcome value for each vertex $\vertname$ is given by the sum modulo 2 (denoted by $\bigoplus$) of all the outcomes associated with the parent vertices of $\vertname$. In particular, note that if $u$ has exactly one parent $\vertname$, this causal model imposes $\outcome_u=\outcome_{\vertname}$.
        
        We now argue for the relevant conditional independence using this functional model. 
        We first consider a simple example of a graph $\graphname\sn'$ where \cref{eq:psep_compl_2} and hence \cref{eq:psep_compl_4} are satisfied. We carry out the proof for this simple case  and then argue how the general argument can be reduced to this case.
        
        \begin{itemize}
            \item[] \textbf{Proof for a simple case:} We consider a specific way, using the following three paths, to implement the three $d$-connections of \cref{eq:psep_compl_4} respectively:

        \begin{equation}
        \label{eq: simple_path1}
            \centertikz{\node[onode] (x) at (0,0) {$\vertname_1$};
            \node[onode] (v1) at (1.5,0) {$\tilde\vertname_1$};
             \node[onode] (v2) at (3,0) {$\tilde\vertname_2$};
             \node (d) at (4.5,0) {$\dots$};
              \node[onode] (vk) at (6,0) {$\tilde\vertname_k$};
               \node[psnode] (p) at (6,1.5) {$\postvertname$};
               \draw[cleg] (x)--(v1); \draw[cleg] (v1)--(v2); \draw[cleg] (v2)--(d); \draw[cleg] (d)--(vk); \draw[cleg] (vk)--(p);
             
               }     
        \end{equation}
        
         for some vertices $\{\tilde\vertname_i\}_{i=1}^k$, none of which belong to $V_3$, 
         \begin{equation}
        \label{eq: simple_path2}
            \centertikz{\node[onode] (x) at (0,0) {$\vertname_2$};
            \node[onode] (v1) at (1.5,0) {$\tilde u_1$};
             \node[onode] (v2) at (3,0) {$\tilde u_2$};
             \node (d) at (4.5,0) {$\dots$};
              \node[onode] (vk) at (6,0) {$\tilde u_l$};
               \node[psnode] (p) at (6,1.5) {$\postvertname'$};
               \draw[cleg] (x)--(v1); \draw[cleg] (v1)--(v2); \draw[cleg] (v2)--(d); \draw[cleg] (d)--(vk); \draw[cleg] (vk)--(p);}
        \end{equation}
        
         for some vertices $\{\tilde u_j\}_{j=1}^l$, none of which belong to $V_3$, 
        \begin{equation}
        \label{eq: simple_path3}
           \centertikz{
                    \begin{scope}[xscale=2.6,yscale=1.3]
                        \node[onode] (c) at (1.5,1) {$C$};
                        \node[prenode] (q) at (1,0) {$\prevertname$};
                        \node[psnode] (p) at (0.5,1) {$\postvertname$};
                        \draw[cleg] (q) -- (p);
                        \draw[cleg] (q) --  (c);
        
                         \node[prenode] (q') at (2,0) {$\prevertname'$};
                         \node[psnode] (p') at (2.5,1) {$\postvertname'$};
                           \draw[cleg] (q') -- (c);
                           \draw[cleg] (q') -- (p');
                    \end{scope}
                    }    
        \end{equation}
        
        for some $C\in V_3$.
        Let $\graphname\sn'$ be defined to be formed by these three unblocked paths (i.e. all vertices and edges featuring in \cref{eq: simple_path1}, \cref{eq: simple_path2}, \cref{eq: simple_path3} included) and the remaining vertices of the original graph $\graphname\sn$ included without any of their associated edges.

        Since all the vertices in $\{\tilde v_i\}_{i=1}^k$ and $\{\tilde u_j\}_{j=1}^l$ have exactly one parent in this $\graphname\sn'$, one can immediately see that the functional model $\fcm_{\graphname\sn'}$ for this case imposes $\outcome_{\tilde\vertname_k}=\outcome_{\vertname_1}$ and $\outcome_{\tilde u_l}=\outcome_{\vertname_2}$. $\postvertname$ has two parents, $\tilde\vertname_k$ and $\prevertname$, with the outcomes related as $\outcome_\postvertname=\outcome_{\tilde\vertname_k}\oplus\outcome_\prevertname$ and similarly $\postvertname'$ has two parents, $\prevertname'$ and $\tilde u_l$, with the outcomes related as $\outcome_{\postvertname'}=\outcome_{\prevertname'}\oplus \outcome_{\tilde u_l}$. Finally, we also have $\outcome_C=\outcome_{\prevertname}\oplus\outcome_{\prevertname'}$.

        If we condition on $\outcome_C=0$, then we have $\outcome_{\prevertname}=\outcome_{\prevertname'}$.
        Then notice that $\outcome_\postvertname=\outcome_{\vertname_1}\oplus\outcome_\prevertname$ and $\outcome_{\postvertname'}=\outcome_{\vertname_2}\oplus\outcome_\prevertname$, i.e., given that $\outcome_C=0$, we have $\outcome_\postvertname\oplus \outcome_{\vertname_1}=\outcome_{\postvertname'}\oplus \outcome_{\vertname_2}$. This implies that conditioned on $\outcome_C=\outcome_{\postvertname'}=\outcome_\postvertname=0$, we have perfect correlations $\outcome_{\vertname_1}=\outcome_{\vertname_2}$ between the outcomes of $\vertname_1$ and $\vertname_2$. 
        We can visualize this as follows. Our causal model ensures that when we condition on $\outcome_C=0$, for the purpose of correlations in the model, we can equivalently reason using the following simplified graph\footnote{Even though this does not have the structure of a teleportation graph, the relevant point is that $\postvertname$ and $\postvertname'$ share effectively a common cause $\prevertname$ when conditioned on $\outcome_C=0$.}

        \begin{equation}
        \label{eq: simple}
           \centertikz{
                    \begin{scope}[xscale=2.6,yscale=1.3]
                        \node[onode] (x) at (0,0) {$\vertname_1$};
                        \node[psnode] (p') at (1.5,1) {$\postvertname'$};
                        \node[prenode] (q) at (1,0) {$\prevertname$};
                        \node[psnode] (p) at (0.5,1) {$\postvertname$};
                        \draw[cleg] (q) -- (p);
                        \draw[cleg] (q) -- (p');
                        \draw[cleg] (x) --  (p);
         
                          \node[onode] (y) at (2,0) {$\vertname_2$};
                           \draw[cleg] (y) -- (p');
                          
                    \end{scope}
                    }    \quad \text{given } \outcome_C=0
        \end{equation}
            Explicitly, recalling that $\graphname\sn'$ is a directed acyclic graph we can directly compute the joint probability distribution of $\fcm_{\graphname\sn'}$ (\cref{def: distribution_functional_cm}). Then, the functional dependence $\outcome_\postvertname\oplus \outcome_{\vertname_1}=\outcome_{\postvertname'}\oplus \outcome_{\vertname_2}$, which holds when $\outcome_C = 0$, translates to the conditional dependence 
        \begin{equation}
           \label{eq:psep_compl_6}
            (\bar X_1\not\indep \bar X_2|\postvertname=0, \postvertname'=0,  C=0)_{\probfacyc_{\graphname\sn'}},
        \end{equation}
        where $\bar X_i$ denotes the random variable associated to the vertex $\vertname_i$ for $i \in \{1,2\}$.
        
        Note that the remaining vertices are causally disconnected in $\graphname\sn$ (in the sense that are not involved in any functional dependencies, except trivially on their error variable), by construction of the way the causal model on the subgraph $\graphname\sn'$ is extended to a causal model on $\graphname\sn$ in \hyperlink{obs1}{\it Observation 1}, and conditioning on the outcomes of such vertices cannot uncorrelate the remaining outcomes in an acyclic functional model. 
        Using this, we see that \cref{eq:psep_compl_6}
        implies that $(\bar X_1\not\indep X_2|V_3 \cup \{\postvertname=0\}_{\postvertname\in\psvertset})_{\probacyc_{\graphname\sn}}$ also holds using \cref{def:conditional independence}, where we recall that $X_i$ is the set of variables associated to the vertex set $V_i$ for $i\in\{1,2\}$.
        This trivially implies, by inclusion, that
        $(X_1 \not \indep X_2|V_3 \cup \{\postvertname=0\}_{\postvertname\in\psvertset})_{\probfacyc_{\graphname\sn}}$ holds.

        Finally, to link the constructed model $\fcm_{\graphname\sn}$ on the teleportation graph $\graphname\sn\in \graphfamilysn\graphname$ of $\graphname$ to a causal model $\fcm_{\graphname}$ on the original (possibly cyclic) graph $\graphname$, it is important to note that our mapping from cyclic functional models to acyclic functional models with post-selection relies on a particular form of post-selection at the post-selection vertices $\psvertset$. The post-selections must form a post-selected teleportation protocol (\cref{def:classical ps tele}) for each pair of pre and post-selection vertices in \cref{eq:psep_compl_5}.

        Notice that the graph $\graphname$ differs from $\graphname\sn$ specifically by replacing each structure of the form \cref{eq:psep_compl_5} with a directed edge $\centertikz{\node[onode] (v) at (0,0) {$\vertname$}; \node[onode] (u) at (1.5,0) {$u$}; \draw[cleg] (v)--(u);}$ and thus, $\graphname$ contains no pre and post-selection vertices, only observed vertices $\centertikz{\node[onode] (v) at (0,0) {$\vertname$}; }$. By post-selecting on $\{t_\postvertname=0\}_{\postvertname\in\psvertset}$ (i.e., $0$ outcome on all post-selection vertices) in the causal model $\fcm_{\graphname\sn}$ defined on $\graphname\sn$, we induce a causal model $\fcm_{\graphname}$ on $\graphname$ where one replaces $r_\prevertname$ (outcome of a pre-selection vertex) appearing in any functional dependence with the outcome $\outcome_{\vertname}$ for the corresponding vertex $\vertname$ (which has an edge to the same $\postvertname$ as $\prevertname$, as in \cref{eq:psep_compl_5}). This way all functional dependences are expressed only in terms of the vertices of $\graphname$, and $\fcm_{\graphname}$ is a fully specified functional model on $\graphname$.

        This establishes that the constructed causal model $\fcm_{\graphname\sn}$ on the acyclic teleportation graph $\graphname\sn$ of $\graphname$ induces a functional model $\fcm_{\graphname}$ on the possibly cyclic graph $\graphname$ where $\probf(\outcome)_{\graphname}=\probfacyc(\outcome|\{t_\postvertname=0\}_{\postvertname\in\psvertset})_{\graphname\sn}$. Here the $t_\postvertname=0$ post-selection plays the same role as the post-selection on $\outcome_\postvertname=1$ in \cref{def: probability distribution cyclic fcm} and $\outcome=\{\outcome_{\vertname}\}_{\vertname\in \vertset}$ is the set of all observed outcomes of the graph. 
        
        Therefore the the conditional dependence $(X_1\not\indep  X_2|X_3\cup \{\postvertname=0\}_{\postvertname\in \psvertset})_{\probfacyc_{\graphname\sn}}$ established above for the acyclic model on the teleportation graph $\graphname\sn$ implies $(X_1\not\indep X_2|X_3)_{\probf_{\graphname}}$, as required.
         \item[] \textbf{Proof for the general case:} We now consider the case of more general paths that can ensure the $d$-connections in \cref{eq:psep_compl_4}, as compared to the paths in \cref{eq: simple_path1}, \cref{eq: simple_path2}, \cref{eq: simple_path3}. Consider the first $d$-connection $(\vertname_1 \not\perp^d \postvertname|V_3)_{\graphname\sn'}$ of \cref{eq:psep_compl_4}. In the simple case of \cref{eq: simple_path1}, we argued that for the correlations between outcomes in our functional model, we could equivalently consider $\vertname_1$ as a direct cause of $\postvertname$ as in \cref{eq: simple}. This used the fact that the $d$-connecting path between $\vertname_1$ and $\postvertname$ in \cref{eq: simple_path1} ensures equal outcomes for all intervening vertices in the path that are not in the conditioning set $V_3$. 
         It is easy to see that this property holds generically for our causal model, independently of the particular $d$-connecting path \cref{eq: simple_path1}, if we condition on $\outcome_{\vertname}=0$ for every $\vertname$ that features in this $d$-connecting path where $\vertname$ or a descendant of it belongs to $V_3$.

         We argue this through examples for simplicity, but the argument generalizes in a straightforward manner. 
         Consider the direct cause graph $\centertikz{\node[onode] (x) at (0,0) {$v_1$}; \node[onode] (v) at (1.5,0) {$\tilde\vertname$};\draw[cleg] (x)--(v);}$, and the common cause graph $\centertikz{\node[onode] (x) at (0,0) {$v_1$}; \node[onode] (u) at (1.5,0) {$\tilde u$}; \node[onode] (v) at (3,0) {$\vertname$};\draw[cleg] (u)--(x); \draw[cleg] (u)--(v); }$ where $\tilde u$ in the latter case does not belong to $V_3$ (otherwise, $v_1$ and $\tilde v$ would be $d$-separated by conditioning on $V_3$). 
         In both these cases, the causal model from \cref{def: proof_cm} ensures $\outcome_{v_1}=\outcome_{\tilde\vertname}$ ($=\outcome_{\tilde u}$). 
         Now consider the collider graph $\centertikz{\node[onode] (x) at (0,0) {$v_1$}; \node[onode] (u) at (1.5,0) {$\tilde u$}; \node[onode] (v) at (3,0) {$\tilde\vertname$};\draw[cleg] (x)--(u); \draw[cleg] (v)--(u); }$ where $\tilde u\in V_3$ (so that $v_1$ and $\tilde\vertname$ are $d$-connected given $V_3$). 
         Our causal model ensures that $\outcome_{\tilde u}=\outcome_{v_1}\oplus\outcome_{\tilde\vertname}$ i.e., if we post-select on $\outcome_{\tilde u}=0$, we have equality $\outcome_{v_1} =\outcome_{\tilde\vertname}$. 
         Even if $\tilde u\not\in V_3$ but $\tilde u$ is a descendant of $u'\in V_3$, this means we have a directed path from $\tilde u$ to $u'$ and conditioning on $\outcome_{u'}=0$ would give $\outcome_{\tilde u}=0$ and thereby  $\outcome_{v_1} =\outcome_{\tilde\vertname}$. Note that as we apply the functional model defined in \cref{def: proof_cm} to the subgraph $\graphname\sn'$ comprised only of edges in a relevant $d$-connecting path, any such colliders $\tilde{u}$ will only have at most two children (two children if it is itself a collider and one child if it is a descendant of a collider) in $\graphname\sn'$ even if this vertex has more parents in $\graphname\sn$.
        
         In a similar manner, we can consider the second $d$-connection $(v_2 \not\perp^d \postvertname'|V_3)_{\graphname\sn'}$ of \cref{eq:psep_compl_4} and obtain that for the purpose of our arguments, we can equivalently consider $v_2$ as a direct cause of $\postvertname'$ as in \cref{eq: simple}.

        Finally for the third $d$-connection $(\postvertname \not\perp^d \postvertname'|V_3)_{\graphname\sn'}$ of \cref{eq:psep_compl_4}, we can apply a similar argument as well. In \cref{eq: simple_path3}, the central observed vertex $C$ acts as a collider. We may also have a situation where it acts as part of a chain.

        \begin{equation}
           \centertikz{
                    \begin{scope}[xscale=2.6,yscale=1.3]
        
                        \node[onode] (c) at (1.5,1) {$C$};
                        \node[prenode] (q) at (1,0) {$\prevertname$};
                        \node[psnode] (p) at (0.5,1) {$\postvertname$};
                        \draw[cleg] (q) -- (p);
         
                        \draw[cleg] (q) --  (c);

                         \node[psnode] (p') at (2,2) {$\postvertname'$};
                   
                           \draw[cleg] (c) -- (p');
                    
                    \end{scope}
                    }    
        \end{equation}
        
        In this case, for this to be the $d$-connecting path for the third condition of \cref{eq:psep_compl_4}, we must have $C\not\in V_3$.
        The causal model of \cref{def: proof_cm} for this case entails $\outcome_C=\outcome_\prevertname$, i.e., this is again equivalent to $\postvertname$ and $\postvertname'$ sharing the same common cause $\prevertname$ as in \cref{eq: simple}.
        
        Another case is where $C$ is a common cause of $\postvertname$ and $\postvertname'$ (then $C\not\in V_3$), here we already have the structure of \cref{eq: simple} (it does not matter for our arguments whether the common cause is $C$ or $\prevertname$, the causal model behaves in the same way). More generally, $\postvertname$ and $\postvertname'$ could be $d$-connected through paths that have additional intervening vertices $\postvertname''\in \psvertset$ which are conditioned on. These arguments straightforwardly generalize to that case, post-selecting on the $0$ value of all such vertices again reduces our arguments to a situation of the form of \cref{eq: simple}.

        Therefore all the steps following \cref{eq: simple} also apply to the general case, allowing us to establish the required conditional independence  $(X_1\not\indep  X_2|X_3)_{\probf_{\graphname}}$. 
        
        \end{itemize}
 This completes the proof. \qedhere
  \end{itemize}

\end{proof}

\section{Proofs of~\cref{sec:solvability}}
\label{app:proofs solv}
\austopsucc*
\begin{proof}
    Let us denote with
   \begin{equation}
       \csuccessprob^{\graphname\sn}=\probfacyc\left(\{\postoutcome_\postvertname = 1\}_{\postvertname\in\psvertset}\right)_{\graphname\sn},
   \end{equation}
   the success probability relative to $\graphname\sn\in\graphfamilysn{\graphname}$, with post-selection set $\psvertset$. We first prove the statement for a specific teleportation graph.
   
   Consider the teleportation graph $\graphname_0\in\graphfamilysn{\graphname}$ where $\splitvert{\graphname_0}=\vertset$, i.e., all vertices are split and associated with pre- and post-selection vertices and edges as in~\cref{def: family functional cm}.
   Therefore, the set of vertices of $\graphname_0$ is $\vertset_0=\vertset\cup \{\prevertname_{\vertname}, \postvertname_\vertname\}_{\vertname\in\vertset}$.
   For each $\vertname\in\vertset$ we denote with $(\ctelefunc^\vertname,\cteleprob^\vertname_B,\cteleprob^\vertname_C)$ the augmented post-selected teleportation protocol associated to the corresponding pre and post-selection pair.

   Since all vertices are split, for each vertex $\vertname\in\vertset$ the variable associated to $\vertname$, $\outcome_{\vertname}$ is the value associated to such vertex, obtained through $\funcarg{\vertname}$, and the variable associated to the reconsigning pre-selection vertex $\prevertname_\vertname$, $r_{\vertname}$, acts as input of the functions $\funcarg{\vertname'}$ where $\vertname'\in\childnodes{\vertname}$. Thus, denoting with $t_{\vertname}\in\{0,1\}$ the values associated to $\postvertname_{\vertname}$ and with $r:=\{r_\vertname\in\outcomemaparg{\vertname}\}_{\vertname\in\vertset}$ the values associated to the preselection vertices $\{\prevertname_{\vertname}\}_{\vertname\in\vertset}$, we have (\cref{def: distribution_functional_cm})
   \begingroup
   \crefname{lemma}{\textup{lem.}}{\textup{Lem.}}
   \begin{align}
        \probfacyc(\{\postoutcome_\vertname = 1\}_{\vertname\in\vertset})_{\graphname_0} 
        &= \sum_{r,x}\probfacyc(\outcome, r, \{\postoutcome_\vertname = 1\}_{\vertname\in\vertset})_{\graphname_0} \nonumber\\
        &= \sum_{r,b,x,u}
        \prod_{\vertname\in\vertset} \probex{\vertname} (u_{\vertname})\delta_{\outcome_{\vertname}, \funcarg{\vertname}(r_{\parnodes{\vertname}},u_\vertname)} \delta_{\ctelefunc^{\vertname}(\outcome_\vertname,b_\vertname,r_{\vertname}),1} \cteleprob_B^{\vertname}(b_\vertname) \cteleprob_C^{\vertname}(r_\vertname)  \nonumber\\
        &\hspace{-0.27cm}\stackrel{\cref{lem: copy property aug}}{=} \sum_{r,x,u}
        \prod_{\vertname\in\vertset} \probex{\vertname} (u_{\vertname})\delta_{\outcome_{\vertname}, \funcarg{\vertname}(r_{\parnodes{\vertname}},u_\vertname)} \teleprob^{(\vertname)} \delta_{\outcome_{\vertname},r_{\vertname}} \nonumber\\
    &=  \teleprob^{\text{tot}}\sum_{x,u}
        \prod_{\vertname\in\vertset} \probex{\vertname} (u_{\vertname})\delta_{\outcome_{\vertname}, \funcarg{\vertname}(\outcome_{\parnodes{\vertname}},u_\vertname)}   \nonumber\\
        &= \teleprob^{\text{tot}} \sum_{u}\prod_{\vertname\in\vertset}\probex{\vertname}(u_{\vertname}) \numsol{u} \nonumber\\
    &= \teleprob^{\text{tot}} \avnumsol,
   \end{align}
   \endgroup
   where $\teleprob^{\text{tot}}=\prod_{\vertname\in\vertset}\teleprob^{(\vertname)}$ and $\teleprob^{(\vertname)}$ is the probability of successful post-selection of the implementation chosen for the vertex $\vertname$. 

   Consider an arbitrary teleportation graph $\graphname\sn\in\graphfamilysn{\graphname}$ with $\splitvert{\graphname\sn}\subset \vertset$. In the proof of~\cref{lemma: acyclic_prob_same_func} we related the success probabilities of two teleportation graphs. By applying recursively~\cref{eq:relating prob succ} we get 
   \begin{equation}
       \csuccessprob^{(\graphname\sn)} = \left(\frac{\teleprob^{\text{tot}}}{\prod_{\vertname\notin\splitvert{\graphname\sn}}\teleprob^{(\vertname)}}\right)  \avnumsol = \left(\prod_{\vertname\in\splitvert{\graphname\sn}}\teleprob^{(\vertname)}\right) \avnumsol,
   \end{equation}
   which completes the proof.
\end{proof}

\austous*

\begin{proof}
    Consider a uniquely solvable functional model: it holds that
    \begin{equation}
        \avnumsol =  \sum_{x} \prod_{\vertname\in\vertset}\probex{\vertname}(u_{\vertname})\numsol{u} =  \sum_{x} \prod_{\vertname\in\vertset}\probex{\vertname}(u_{\vertname}) = 1,
    \end{equation}
    where the sum runs over $x = \{x_\vertname \in \outcomemaparg\vertname\}_{\vertname\in\vertset}$.
\end{proof}

\end{document}